\newcommand{\OO}{\mathcal{O}}
\newcommand{\RR}{\mathbb{R}}
\newcommand{\QQ}{\mathbb{Q}}
\newcommand{\ZZ}{\mathbb{Z}}
\newcommand{\FF}{\mathbb{F}}
\DeclareMathOperator{\card}{card}
\DeclareMathOperator{\Hom}{Hom}
\DeclareMathOperator{\Aut}{Aut}
\DeclareMathOperator{\GL}{GL}
\DeclareMathOperator{\Nrd}{Nrd}
\DeclareMathOperator{\Trd}{Trd}
\DeclareMathOperator{\discrd}{discrd}
\DeclareMathOperator{\disc}{disc}
\DeclareMathOperator{\codiff}{codiff}
\DeclareMathOperator{\Id}{Id}
\DeclareMathOperator{\Ker}{Ker}
\DeclareMathOperator{\End}{End}
\DeclareMathOperator{\lcm}{lcm}
\newtheorem{thm}{Theorem}[section]
\newtheorem{prop}[thm]{Proposition}
\newtheorem{lem}[thm]{Lemma}
\newtheorem{cor}[thm]{Corollary}
\theoremstyle{definition}
\newtheorem{defn}[thm]{Definition}
\newtheorem{rmk}[thm]{Remark}
\newtheorem{alg}[thm]{Algorithm}
\newtheorem*{rmk*}{Remark}
\newtheorem*{ex*}{Example}
\numberwithin{equation}{section}
\author{Kirsten Eisentr\"ager} \author{Gabrielle Scullard}
\address{Kirsten Eisentr\"ager, Department of Mathematics, The
  Pennsylvania State University, University Park, PA 16802, USA, {\tt
    eisentra@math.psu.edu}} \address{Gabrielle Scullard, Department of
  Mathematics, University of Georgia, Athens, GA
  30602, USA, {\tt gabrielle.scullard@uga.edu}} \title[Connecting Kani's Lemma and path-finding in the Bruhat-Tits tree to compute supersingular endomorphism rings]{Connecting Kani's Lemma and path-finding in the Bruhat-Tits tree to compute supersingular endomorphism rings}
\begin{document}
\begin{abstract}
  We give a deterministic algorithm to compute the endomorphism ring of a supersingular
  elliptic curve in characteristic $p$, provided that we are given two noncommuting endomorphisms and the factorization of the discriminant of the ring $\OO_0$ they generate. The algorithm is polynomial in the largest prime factor of the reduced discriminant of $\OO_0$ which is not equal to $p$.
At each prime $q$ for which $\OO_0$ is not maximal, we compute the endomorphism ring locally by computing a $q$-maximal order containing it and, when $q \neq p$, recovering a path to $\End(E) \otimes \ZZ_q$ in the Bruhat-Tits tree. 
We use techniques of higher-dimensional isogenies to navigate towards the local endomorphism ring. 
Our algorithm improves on a previous algorithm which requires a restricted input and runs in subexponential time under certain heuristics.
Page and Wesolowski give a probabilistic polynomial time reduction between computing a single non-scalar endomorphism and computing the endomorphism ring. Beyond using techniques of higher-dimensional isogenies to divide endomorphisms by a scalar, our methods are completely different.
  \end{abstract}
  \maketitle
\section{Introduction}
One of the fundamental problems in computational arithmetic geometry
is computing the endomorphism ring of an elliptic curve.  In the
ordinary case, Bisson and Sutherland \cite{BS2011} gave a
subexponential time algorithm for this problem under certain
heuristics, later improved to rely only on GRH \cite{B12}.  Recently,
Robert outlined an algorithm to compute the endomorphism ring of an
ordinary elliptic curve in polynomial time, assuming access to a
factoring oracle~\cite[Theorem 4.2]{Rob23b}.

In this paper we give a deterministic polynomial time algorithm that
computes the endomorphism ring of a supersingular elliptic curve,
given two non-commuting endomorphisms and a factorization of the
reduced discriminant of the order they generate.  Beside being of
intrinsic interest, computing the endomorphism ring of a supersingular
elliptic curve has become a central problem in isogeny-based
cryptography.  The first cryptographic application of isogenies
between supersingular elliptic curves was the hash function
in~\cite{CGL2009}. The security of this hash function depends on the
hardness of computing endomorphism rings.  More generally, the
hardness of computing endomorphism rings is necessary for the security
of all isogeny-based cryptosystems~\cite{EHLMP18, W21, Wes22}.

Computing the endomorphism ring of a supersingular elliptic curve $E$
was first studied by Kohel~\cite[Theorem 75]{Koh96}, who gave an
approach for generating a subring of finite index of the endomorphism
ring $\End(E)$. The algorithm is based on finding cycles in the
$\ell$-isogeny graph of supersingular elliptic curves in
characteristic $p$ and runs in time $O(p^{1 + \varepsilon})$.  In this paper we complete
Kohel's approach by showing how to compute $\End(E)$ from a suborder.
This builds on \cite{EHLMP20} which gave a 
subexponential algorithm, under certain heuristics, if the input suborder was
Bass.  In~\cite{FIK25}, it is shown that under GRH, a Bass suborder
of $\End(E)$ can be computed in $O(p^{1/2 + \varepsilon})$ time.  In a
different direction, one can attempt to compute $\End(E)$ by
constructing a generating set. One example of a cycle-finding algorithm for computing powersmooth endomorphisms
with complexity $\tilde{O}(p^{1/2})$ and polynomial storage is given
by Delfs and Galbraith~\cite{DG16}.  In~\cite{GPS} it is argued that
heuristically one expects $O(\log p)$ calls to a cycle finding
algorithm until the cycles generate $\End(E)$.  In~\cite{FIK25}, a basis for $\End(E)$ is generated 
using certain inseparable endomorphisms. 
Page and Wesolowski give an
unconditional probabilistic for the
computation of the full endomorphism ring whose complexity is 
$\tilde{O}(p^{1/2})$~\cite{PW23}.

One of the main proposals in isogeny-based cryptography, SIDH, was
broken in July 2022~\cite{CD23,MMPPW2023,Rob2023}. In SIDH, the
underlying hard problem is finding paths in the isogeny graph when
certain torsion-point information is revealed.  The groundbreaking
idea for the break of SIDH relied on moving this problem into a more
flexible framework using isogenies between products of elliptic
curves, via Kani's Lemma.  We apply these techniques to the more
general problem of computing supersingular endomorphism rings. There
are other recent papers which have also exploited higher-dimensional
isogenies, such as the new post-quantum signature scheme
SQIsignHD~\cite{DLRW23}. They are also used to answer the question of how
much knowing one non-scalar endomorphism helps in computing the full
endomorphism ring~\cite{HW25, PW23}. In~\cite{PW23}, it is shown that computing the endomorphism ring of a
supersingular elliptic curve is equivalent under probabilistic
polynomial time reductions to the problem of computing a single
non-scalar endomorphism.

Our algorithm is an improvement and a generalization of the main
result in \cite{EHLMP20}, which we summarize briefly here. Given a
Bass order $\mathcal{O}_0$ and a factorization of the reduced
discriminant $\discrd(\mathcal{O}_0)$, they compute all local maximal
orders containing $\mathcal{O}_0 \otimes \ZZ_q$ at each prime $q$
dividing $\discrd(\mathcal{O}_0)$.  They then combine the local
information to obtain all global maximal orders $\mathcal{O}$
containing $\mathcal{O}_0$ and check if each maximal order $\OO$ is
isomorphic to $\End(E)$.  The Bass restriction on $\mathcal{O}_0$ is
needed to bound the number of maximal orders.

Our algorithm also approaches the problem locally. However, we are
able to determine the local maximal order $\End(E) \otimes \ZZ_q$
without constructing global candidates for $\End(E)$. We also do not
require the Bass restriction, although we can give a more efficient
algorithm when $\mathcal{O}_0$ is Bass.  

There are two key tools.  The
first is a polynomial-time algorithm which, when given an endomorphism
$\beta$ and an integer $n$, determines if $\frac{\beta}{n}$ is an
endomorphism. This algorithm is implicit in Robert's algorithm for
computing an endomorphism ring of an ordinary elliptic
curve~\cite[Section 4]{Rob23b}. A detailed proof and runtime analysis are given
in~\cite[Section 4]{HW25}.
It is also used in \cite{PW23} which gives a probabilistic polynomial time algorithm to compute
the endomorphism ring from an oracle which computes a non-scalar endomorphism.
In contrast, our deterministic algorithm applies 
this algorithm to locate $\End(E) \otimes \ZZ_q$ in the
subtree of the Bruhat-Tits tree that contains a given suborder, 
without needing to enlarge our input order.

The second is a theorem by Tu~\cite[Theorem 8]{Tu11} which expresses an intersection of
finitely many maximal orders in $M_2(\QQ_q)$ as an intersection of at
most three maximal orders, which can be constructed
explicitly. Together with the
containment testing, this allows us to rule out many local maximal orders at once and find $\End(E) \otimes \ZZ_q$.

In the general case, our algorithm works as follows. At each prime $q$
dividing the reduced discriminant, we construct a global order
$\mathcal{O}_q$ containing $\mathcal{O}_0$ which is maximal at
$q$ and is equal to $\mathcal{O}_0 \otimes \ZZ_{q'}$ at all other
primes $q'$. If $q \neq p$, then $\mathcal{O}_q \otimes \ZZ_{q}$
may not be equal to $\End(E) \otimes \ZZ_{q}$, but it is conjugate via
an element of $M_2(\ZZ_q)$ which our algorithm constructs explicitly, step-by-step.
Given the valuation $e = v_q(\discrd(\mathcal{O}_0))$, we are able to recover the correct matrix with at most $4(eq+2)$
applications of an algorithm which determines if an endomorphism is divisible by an integer, summarized in Proposition~\ref{prop:eta} and with more details in Appendix A. In the case that
$\mathcal{O}_0$ is Bass at $q$, in which case there are at most $e+1$
choices, we are able to recover the correct matrix with $4\log_2(e+1)$
applications of Proposition~\ref{prop:eta}. 

\begin{thm}\label{alg:full} There exists an algorithm that computes the endomorphism ring of a supersingular elliptic curve $E$ defined over $\mathbb{F}_{p^2}$ when given $E$, two noncommuting endomorphisms $\alpha_1$ and $\alpha_2$, and a factorization of the reduced discriminant $\Delta$ of the order generated by $\alpha_1$ and $\alpha_2$. We assume $\alpha_1$ and $\alpha_2$ are represented with an efficient HD representation. The algorithm runs in polynomial time in $\log p$, $\log(\deg(\alpha_1))$, $\log(\deg(\alpha_2))$, and $\log \Delta$, and is linear in the number of primes dividing $\Delta/p^{v_p(\Delta)}$ and the largest prime dividing $\Delta/p^{v_p(\Delta)}$. 
\end{thm}

Here, an efficient representation is one which can be represented in polynomial time and for which points can be evaluated efficiently in terms of the size of the input. 

Our algorithm enlarges the order $\OO_0$ generated by $\alpha_1$ and $\alpha_2$ by finding the appropriate local maximal order at each prime $q$ dividing $\Delta/p$. When $q \neq p$, this uses our new results on the intersection of maximal orders in $M_2(\QQ_q)$ to achieve a complexity which is linear in $q$. At the cost of a higher degree polynomial complexity in $q$, one could instead enlarge $\OO_0$ at $q$ by generating linear combinations of basis elements until arriving at an endomorphism in $\OO_0$ which is divisible by $q$, adjoining its division by $q$, and repeating until the order is maximal at $q$. While this algorithm is simpler, our algorithm has lower complexity and introduces completely new techniques. 

This paper is organized as follows. In
Section~\ref{sect:preliminaries}, we give some background on
isogenies and quaternion algebras.
In Section~\ref{sect:btt}, we review the Bruhat-Tits tree and prove results
on intersections of finitely many maximal orders. 
We work with an
order $\OO_0$ that has finite index in the endomorphism ring
$\End(E)$. In Section~\ref{sect:distance}, we take a maximal local
order containing the starting order $\OO_0\otimes \ZZ_q$ and show how to compute
its distance from $\End(E) \otimes \ZZ_q$  in
the Bruhat-Tits tree (Algorithm~\ref{alg:distance}). In Section~\ref{sect:localglobal}, we
give an algorithm to test whether an intersection of maximal
  local orders is contained in
$\End(E) \otimes \ZZ_q$. This is used in Section~\ref{sect:path}, to
find a path from our starting local maximal order to
$\End(E) \otimes \ZZ_q$. In Section~\ref{sect:special}, we give a more efficient algorithm in the Bass case.  Finally in
Section~\ref{sect:fullalg}, we put everything together and give the
algorithm for computing the full endomorphism ring from our starting
order $\OO_0$. We also give an example of our algorithm. 
In Appendix A, we describe the division algorithm and prove its complexity and correctness.
In Appendix B, we describe how to compute an embedding $f: \OO_0 \to M_2(\QQ_q)$.

\section{Preliminaries and Definitions}\label{sect:preliminaries}

\subsection{Orders and Lattices in Quaternion Algebras, Discriminants}

\begin{defn}
  Let $R$ be a domain with field of fractions $F$, and let $B$ be a
  finite-dimensional $F$-algebra.  A subset $M\subseteq B$ is an {\em
    $R$-lattice} if $M$ is finitely generated as an $R$-module
  and $MF=B$.  An {\em $R$-order} $\mathcal{O} \subseteq B$ is an
  $R$-lattice that is also a subring of $B$.
  An order is {\em maximal} if it is not properly contained in another order.
\end{defn}

 \begin{defn} 
Suppose $F$ is a field of characteristic not equal to $2$. An algebra $B$ over $F$ is a \em{quaternion algebra} if there exist $i,j \in B$ such that $1,i,j,ij$ is an $F$-basis for $B$ and $i^2=a$, $j^2 = b$, and $ji=-ij$ for some $a,b \in F^{\times}$. 

When the characteristic of $F$ is $2$, $B$ is a quaternion algebra if there exists an $F$-basis $1,i,j,k$ for $B$ such that $i^2 + i =a$, $j^2=b$, and $k=ij=j(i+1)$ with $a \in F$ and $b \in F^{\times}$. 
\end{defn}

There is a {\em standard involution} on $B$ which maps
$\alpha~=~a_1+a_2i+a_3j+a_4ij$ to
$\overline{\alpha}:= a_1-a_2i-a_3j-a_4ij$.  The {\em reduced trace} of
such an element $\alpha$ is defined as
$\Trd(\alpha) = \alpha + \overline{\alpha}= 2a_1$. The {\em reduced
  norm} is
$\Nrd(\alpha) = \alpha \overline{\alpha}= a_1^2 - aa_2^2 -ba_3^2 +
aba_4^2.$

We say that a quaternion
  algebra $B$ over $\QQ$ {\em ramifies} at a prime $q$ (respectively
  $\infty$) if $B \otimes \QQ_q$ (respectively $B \otimes \RR$) is a
  division algebra.  Otherwise, $B$ is said to be {\em split} at $q$ (respectively $\infty$); in
  this case,  $B \otimes \QQ_q \cong M_2(\QQ_q)$ (respectively, $B \otimes \RR \cong M_2(\RR)$).

  The {\em discriminant} of a quaternion algebra $B$, denoted
  $\disc B$, is the product of the finite primes that ramify in $B$. For an order
  $\mathcal{O} \subset B$ with $\ZZ$-basis
  $\{\beta_1, \beta_2, \beta_3, \beta_4\}$, the {\em discriminant
    of $\mathcal{O}$} is defined to be
  $\disc(\mathcal{O}) \coloneqq |\det(\Trd(\beta_i \beta_j)_{i,j})|
  \in \mathbb{Z}_{>0}$~\cite[p.\ 242]{Voight2021}.  The discriminant of
  an order is always a square. The {\em reduced discriminant}
  $\discrd(\mathcal{O})$ is the positive integer square root of
  $\disc(\OO)$ ~\cite[p.\ 242]{Voight2021}.

\subsection{Endomorphism rings of supersingular elliptic curves}

For a supersingular elliptic curve $E$ defined over a finite field
of characteristic $p$, the endomorphism ring $\End(E)$ is a maximal
order of the quaternion algebra $B_{p,\infty}$, the unique up to isomorphism
quaternion algebra over $\QQ$ ramified at the primes $p$ and $\infty$. For this
paper, computing the endomorphism ring $\End(E)$ means producing a
basis of endomorphisms which can be evaluated at powersmooth torsion
points and generate $\End(E)$.

One key fact that we will need to enlarge the given order $\OO_0$
to $\End(E)$, a maximal order in $B_{p,\infty}$, is the local-global
principle. This states that an order in a global quaternion algebra
is determined by its completions at each prime, see~\cite[Theorem
9.4.9, Lemma 9.5.3]{Voight2021}.
We use the fact that maximality is a
local property, i.e.\ an order $\OO \subseteq B_{p,\infty}$ is maximal
if and only if for all primes $q$, $\mathcal{O}\otimes \ZZ_q$ is a
maximal order. Being maximal can also be expressed in terms of the
reduced discriminant: an order $\mathcal{O}$ in $B_{p,\infty}$ is
maximal if and only if the reduced discriminant $\discrd(\mathcal{O})$
is equal to $p$~\cite[p.\ 375]{Voight2021}. The primes at which
$\mathcal{O}$ fails to be maximal are exactly those primes dividing
$\discrd(\mathcal{O})/p$.

Thus, the local-global principle reduces finding $\End(E)$ to finding
$\End(E) \otimes \ZZ_q$ at each prime $q$ dividing
$\discrd(\mathcal{O})/p$. When $q = p$, there is a unique maximal
order in the division algebra $B_{p,\infty} \otimes \ZZ_p$. In the
case that $q \neq p$, the local order $\End(E) \otimes \ZZ_q$ is a
maximal order of $B_{p,\infty} \otimes \QQ_q \cong M_2(\QQ_q)$.

\begin{defn} Let $\mathcal{O}_0$ be an order in $B_{p,\infty}$. We say that an order
  $\mathcal{O}$ is a {\em $q$-enlargement} of $\mathcal{O}_0$ if
  $\mathcal{O}_0 \subset \mathcal{O}$ and $\mathcal{O} \otimes
  \ZZ_{q'} = \mathcal{O}_0 \otimes \ZZ_{q'}$ for all $q' \neq q$. We
  say that $\mathcal{O}$ is a {\em $q$-maximal $q$-enlargement} if $\mathcal{O}$ is a $q$-enlargement such that $\mathcal{O} \otimes \ZZ_q$ is maximal.
\end{defn}

Let $\mathcal{O} \subset B_{p,\infty}$ be a $\ZZ$-order.
We call $\mathcal{O}$ an {\em Eichler order} if $\mathcal{O} \subseteq
B$ is the intersection of two (not necessarily distinct) maximal orders.
The \textit{codifferent} of an order is
$\codiff(\mathcal{O)} = \{ \alpha \in B: \Trd(\alpha \mathcal{O})
\subseteq \ZZ\}.$ We say
that $\mathcal{O}$ is {\em Gorenstein} if the lattice
codiff$(\mathcal{O})$ is invertible as a lattice
\cite[24.1.1]{Voight2021}. A lattice $I$ is {\em invertible} if there is a lattice $I' \subset B$ such that $II' = O_L(I) = O_{R}(I')$ and $I'I = O_L(I')=O_R(I)$, where $O_L(J)$ denotes the left order of $J$ and $O_R(J)$ denotes the right order of $J$. We call $\mathcal{O}$ {\em Bass}
if every superorder $\mathcal{O}' \supseteq \mathcal{O}$ is
Gorenstein.

\subsection{Representation of endomorphisms}

One of the key tools we will use is an algorithm 
which determines if a rational multiple of an endomorphism is an endomorphism. 
For an efficient running time, the endomorphism must be represented
efficiently. For convenience, we will use the so-called ``HD representation''; others are possible as well.

Following \cite[Section 2.4]{Rob24}, we give the definition of an HD representation.

\begin{defn} Let $E$ be a supersingular elliptic curve and let $\phi$ be an $N$-endomorphism of $E$. The {\em HD representation} of $\phi$ is given by the images $(P_{1,i}, P_{2,i}, \phi(P_{1,i}), \phi(P_{2,i}))$ on the basis $(P_{1,i}, P_{2,i})$ of the $E[\ell_i^{e_i}]$, such that $N':=\prod_{i=1}^r \ell_i^{e_i} > N$. 
\end{defn}

The following theorem summarizes the complexity of the representation. For simplicity, we are stating the theorem in the special case of our setting,
for endomorphisms of supersingular elliptic curves over $\FF_{p^2}$, and stating the complexity for the worst case $8$-dimensional isogeny representation. 

\begin{thm}\cite[Section 5, Theorem 3]{Rob24}~\label{thm:complexity} Let $E$ be supersingular defined over $\FF_{p^2}$ and let $\phi$ be an endomorphism of $E$ of degree $N$, with $N$ prime to $p$, given with HD representation on $E[N']$ for $N'=\prod_{i=1}^{m} \ell_i^{e_i}$. Then $\phi$ can be efficiently embedded into an $8$-dimensional $N'$-isogeny $\Phi$, which can be decomposed as a product of $\ell_i$-isogenies in time $\tilde{O}(m^2 d' e \ell^{8})$ arithmetic operations over $\FF_{p^2}$. Given a point $P \in E(\FF_{q'})$, $\phi(P)$ can be evaluated in $O(me\ell^{8}d'')$ arithmetic operations over $\FF_{p^2}$.

Here, $m$ is the number of primes dividing $N'$, $d'=\max\{\lcm(d_i, d_j)\}$ where $\FF_{p^{2d'}}$ is the field of definition of $E[\ell_i^{e_i}]$, $d''=\max\{\lcm(d_i, d_P)\}$ 
where $\FF_{p^{2d_P}}$ is the field of definition of $P$,
$e=\max\{e_i: 1\leq i\leq r\}$, and $\ell = \max\{\ell_i: 1 \leq i \leq r\}$.  
\end{thm}

In our application, one can show (see the proof of Lemma~\ref{lem:complexity}) that $m$ and $\ell$ can be taken to be $O(\log(N))$ and $e$ can be taken to be $1$.

\begin{rmk} 
	Theorem~\ref{thm:complexity} shows that for a choice of $N'$, an HD representation is an efficient representation in the sense of \cite[Definition 7]{HW25}.
\end{rmk}

\subsection{Division algorithm for endomorphisms}

The following polynomial-time algorithm is crucial to testing whether a local order is contained in the endomorphism ring locally.

\begin{prop}\label{prop:eta} [Divide algorithm]
There exists an algorithm which takes as input an elliptic curve $E$ defined over $\FF_{p^k}$, an endomorphism $\beta \in \End(E)$ in efficient representation, and an integer $n$, and outputs  an efficient representation of $\frac{\beta}{n}$ if $\frac{\beta}{n} \in \End(E)$, and FALSE if $\frac{\beta}{n} \not \in \End(E)$. The algorithm runs in polynomial-time in $\log(p^k)$ and $\log(\deg(\beta))$.
\end{prop}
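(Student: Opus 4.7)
My plan is to reduce the question ``is $\beta/n \in \End(E)$?'' to the kernel-containment condition $E[n] \subseteq \ker(\beta)$, and then to test this condition using a Kani-type $2$-dimensional isogeny whose kernel can be computed inside torsion of powersmooth order.

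For the reduction, the universal property of the quotient isogeny $[n]\colon E \to E/E[n] \cong E$ shows that $\beta$ factors as $\beta = [n]\circ\alpha$ for some endomorphism $\alpha$ if and only if the scheme-theoretic kernel $E[n]$ is contained in $\ker(\beta)$. When $\gcd(n,p)=1$ this amounts to $\beta$ vanishing on the group $E[n](\overline{\FF_p})$; the $p$-part is handled separately using the factorization of $[p]$ through Frobenius and Verschiebung. Note also that $\beta/n \in \End(E)$ forces $n^2 \mid \deg(\beta)$, so $\log n = O(\log\deg\beta)$ and $n$ may be treated as polynomial-sized input.

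The obstacle is that $E[n]$ is defined over an extension of $\FF_{p^k}$ of degree up to $O(n^2)$, so direct evaluation of $\beta$ on $E[n]$ is infeasible. Following Robert~\cite{Rob23b} and \cite[Section~4]{HW23}, one first finds a $B$-powersmooth integer $N$ divisible by $n$ together with an auxiliary isogeny $\gamma\colon E\to E'$ of degree $m = N-\deg(\beta)$ to a nearby supersingular curve; such a $\gamma$ exists and can be produced in polynomial time via a controlled walk in the supersingular isogeny graph, using that the reduced-norm form on $\End(E)$ represents all sufficiently large integers. Kani's lemma then assembles $\beta$ and $\gamma$ into a $2$-dimensional $N$-isogeny $F$ of abelian surfaces, satisfying $F\widehat{F}=[N]$ and having $\ker F$ given by the graph $\{(\widehat\gamma(P),\beta(P)) : P\in E[N]\}$. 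Because $N$ is $B$-powersmooth, $F$ factors as a chain of prime-power $2$-dimensional isogenies which can be computed in time polynomial in $\log(p^k)$ and $B$.

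With $F$ explicit, evaluating $\beta$ at a point $Q\in E[N]$ reduces to pushing $Q$ through this chain of prime-power isogenies and reading off the appropriate coordinate of the image. Since $n\mid N$, this yields $\beta(P_1),\beta(P_2)$ for a basis $\{P_1,P_2\}$ of $E[n]\subseteq E[N]$, and the algorithm outputs TRUE iff both vanish. The main obstacle, and the technical heart of the cited analyses, is the construction of the auxiliary isogeny $\gamma$ and powersmooth $N$ with magnitudes polynomial in the input parameters; once this is settled, the remaining $2$-dimensional isogeny arithmetic is standard and gives the overall complexity bound $\mathrm{poly}(\log p^k,\log\deg\beta)$.
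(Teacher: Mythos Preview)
Your reduction to the kernel-containment condition $E[n]\subseteq\ker(\beta)$ is fine, but the proposed method for testing it has a genuine gap. You require a $B$-powersmooth integer $N$ with $n\mid N$. If $n$ has a prime factor $q>B$, then any multiple of $n$ has $q$ as a prime factor and is not $B$-powersmooth; since the proposition places no smoothness restriction on $n$ (indeed in the applications of this paper $n$ is typically a prime $q$ of unbounded size), this constraint simply cannot be met with $B$ polynomial in the input size. Conversely, if $n$ happens to be powersmooth then so is $E[n]$, and you could evaluate $\beta$ on a basis of $E[n]$ directly without any Kani machinery; so the $2$-dimensional construction is doing no work in the only regime where your hypotheses are consistent. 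A second, related issue is the auxiliary isogeny $\gamma$ of degree $m=N-\deg(\beta)$: a walk in the isogeny graph produces only powersmooth degrees, and your appeal to ``the reduced-norm form on $\End(E)$'' to realize an arbitrary $m$ is circular, since $\End(E)$ is exactly what the surrounding algorithm is trying to compute.

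The paper (following Robert and \cite{HW23}) avoids both obstructions by a different mechanism. It sets $N=\deg(\beta)/n^2$, picks $a$ so that $N+a$ is powersmooth and \emph{coprime} to $n$ (rather than divisible by $n$), and builds the auxiliary $a$-isogeny on $E^4$ from a four-squares decomposition of $a$, i.e.\ purely out of scalar multiplications---no knowledge of $\End(E)$ needed. Because $\gcd(N+a,n)=1$, the symbol $\widehat\beta/n$ makes sense on $(N+a)$-torsion as $\widehat\beta\cdot(n^{-1}\bmod N+a)$, so the kernel $K\subset E^8[N+a]$ can be written down. One then checks whether $E^8/K\cong E^8$ as principally polarized abelian varieties and, if so, identifies $\beta/n$ (up to $\Aut(E)$) among the matrix entries of the resulting $8$-dimensional endomorphism by comparison on a second powersmooth torsion group. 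The point is that the algorithm never needs to touch $E[n]$ at all.
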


This algorithm was first outlined by Robert in a special case to
compute endomorphism rings of ordinary curves~\cite[Section
4]{Rob23b}.  The main idea is to use Kani's Lemma to translate the
problem into a higher dimension, where there is enough flexibility to
impose powersmoothness. A proof of Proposition~\ref{prop:eta} is given
in~\cite[Section 4]{HW25}. We provide a proof in
Lemma~\ref{lem:complexity}.

Before~\cite{HW25} was posted, we had written down the details for the algorithm, proof of correctness, and run-time analysis, which we include in~Appendix A.

\section{Local Orders and the Bruhat-Tits Tree}\label{sect:btt}
Given an order $\OO_0$ of finite index in $\End(E)$, we will compute
$\End(E)$ from $\OO_0$ by enlarging it so that locally at a prime $q$
it is maximal and equal to $\End(E) \otimes \ZZ_q$.  When $q=p$ this
step follows from work done in~\cite{Voight13} since
$B_{p,\infty} \otimes \ZZ_p$ is a division algebra and has a unique
maximal order. When $q\neq p$ we will first compute some maximal order
containing $\OO_0 \otimes \ZZ_q$ and then find a path from that
maximal order to $\End(E)\otimes \ZZ_q$.  Here we view both orders as
vertices in the Bruhat-Tits tree for $\GL_2(\QQ_q)$. Our new results
in this section are Lemma~\ref{d3calc}, Lemma~\ref{TuConverse}, and
Corollary~\ref{cor:Lambdatilde}, which expand on work of
Tu~\cite{Tu11} relating the tree of maximal orders containing an order
to the reduced discriminant.

\begin{rmk} Throughout this paper, a path in the Bruhat-Tits tree
  always refers to a {\em nonbacktracking} path.\end{rmk}

\subsection{Bruhat-Tits tree}
For the remainder of this section, fix a prime $q \neq p$. We use the labelling conventions described by Tu~\cite{Tu11}. 

\begin{defn} The {\em Bruhat-Tits tree} is the graph whose vertices are rank 2 $\ZZ_q$-lattices in $(\QQ_q)^2$ up to homothety. Two lattices $L$ and $L'$ are homothetic if there exists $\lambda \in \QQ_{q}$ such that $\lambda L = L'$. Two lattice classes $[L]$ and $[L']$ are connected by an edge if and only if there are representatives $L$ and $L'$ such that $q L' \subsetneq L \subsetneq L'$.

Equivalently (see \cite[Lemma 23.5.2]{Voight2021}), one can consider the vertices of the Bruhat-Tits tree as
maximal orders of $M_2(\QQ_q)$, via the correspondence
$[L] \mapsto \End(L)$. In this case, two maximal orders $\Lambda$ and
$\Lambda'$ are neighbors if and only if
$[\Lambda:\Lambda \cap \Lambda'] = [\Lambda':\Lambda \cap \Lambda'] =
q$.
\end{defn}

Fixing a basis for a lattice $L_0$
and identifying $\End(L_0)$ with $M_2(\ZZ_q)$, we associate to each
basis defining a lattice $L$ a $2\times 2$ matrix $T \in \GL_2(\QQ_q)$ that transforms the basis of $L$
into that of $L_0$.
 
Then $\End(L) = T^{-1} M_2(\ZZ_q) T.$ Given $L_0$ and $L$, the matrix
$T$ is well-defined as an element of
$\QQ_q^*\GL_2(\ZZ_q) \backslash \GL_2(\QQ_q)$. It can be shown (\cite[page 1141]{Tu11}) that standard coset representatives can be taken to be
\begin{equation}
T = \begin{pmatrix}q^a & c\\0 & q^b \\ \end{pmatrix}
\end{equation}
with $a,b \geq 0$, $c \in \ZZ_q$ which can be taken in the set $\{0,1,\ldots, q^{b}-1\}$, and $v_q(c) = 0$ if both $a$ and $b$
are positive. 

The Bruhat-Tits tree is a $(q+1)$-regular tree. Each neighbor of a
lattice $L$ corresponds to a choice of cyclic sublattice of index $q$,
which corresponds to a choice of matrices of the form $\begin{psmallmatrix}
  1 & c\\0 & q \end{psmallmatrix}$ and $\begin{psmallmatrix}q & 0\\0 &
  1 \end{psmallmatrix}$. More generally, a path of length
$n$ starting at the root $M_2(\ZZ_q)$ (labelled by the $2 \times 2$
identity matrix) corresponds to a product of such matrices. We make the following definition.

\begin{defn}\label{def:matrixpath} For each $c$ such that $0 \leq c \leq q-1$, let $\gamma_c
  := \begin{psmallmatrix} 1 & c\\0 & q \end{psmallmatrix}$. Let $\gamma_{\infty}
  =  \begin{psmallmatrix} q & 0\\0 &1\end{psmallmatrix}$.  Let $\Sigma =
  \{\gamma_c: 0 \leq c \leq q-1\} \cup \{ \gamma_{\infty}\}$. We call
  a finite sequence of matrices $\{c_i\}_{i=1}^n$ a {\em matrix path}
  if each $c_i \in \Sigma$ and $c_{i+1} c_i \not \in q M_2(\ZZ_q)$. 
The {\em length} of the
  matrix path $\{c_i\}_{i=1}^n$ is $n$. We call the product $T = c_n c_{n-1} \ldots c_1$
  the {\em associated matrix}.
\end{defn}

There is a bijection between paths of length $n$ in
the Bruhat-Tits tree starting at $M_2(\ZZ_q)$ and matrix paths of
length $n$. The endpoint of the path corresponding to the matrix path
$\{c_i\}_{i=1}^n$ is the order $T^{-1}M_2(\ZZ_q)T$, where $T$ is the associated matrix. 
The vertices of the path are
\[M_2(\ZZ_q), c_1^{-1}M_2(\ZZ_q) c_1,
  c_1^{-1}c_2^{-1}M_2(\ZZ_q)c_2c_1, \ldots, (c_1^{-1}c_2^{-1} \cdots
  c_n^{-1})M_2(\ZZ_q)(c_n \cdots c_2c_1).\]

Depending on the context, we may represent
               vertices in the Bruhat-Tits tree as maximal orders in
               $M_2(\ZZ_q)$, lattices, or $2\times 2$ matrices $T$ as in Figure 1.

\subsection{Distance} We have the usual notion of distance in the Bruhat-Tits tree.

\begin{defn} The {\em distance} between two vertices in the
  Bruhat-Tits tree $v$ and $v'$, denoted $d(v,v')$, is the length of
  the unique path between $v$ and $v'$. We denote the
  distance between $v$ and $v'$ by $d(v, v')$. Here, $v$ and $v'$ may
  be represented by homothety classes of lattices, maximal orders in
  $M_2(\ZZ_q)$, or the matrices associated to the matrix path.
\end{defn}

\begin{defn} Let $\ell$ be a postive integer and $v$ a vertex in the
  Bruhat-Tits tree. The {\em $\ell$-neighborhood of $v$} is the set \[N_\ell(v) := \{v' : d(v', v) \leq \ell\}.\] 
\end{defn}

We also have the analogous notion of distance to a path and neighborhood of a path.

\begin{defn} Let $P$ be the set of vertices along a path in the Bruhat-Tits tree. The {\em distance} between a vertex $v$ and $P$ is  $\min\{d(v, v') \colon v' \in P\}$. The distance between $v$ and $P$ is denoted $d(v, P)$.
\end{defn}

\begin{defn} Let $P$ be the set of vertices along a path in the Bruhat-Tits tree and let $\ell$ be a nonnegative integer. The {\em $\ell$-neighborhood of $P$} is the set \[N_\ell(P) := \{v' : d(v', P) \leq \ell\}.\] 
\end{defn}

\subsection{Distance and matrix labelling}\label{sec:distance}
This section relates
the distance between two vertices in the Bruhat-Tits tree to the
matrix labelling just described. We will also get a bound on the
distance in terms of the reduced discriminant of the intersection of
the two maximal orders.
\vspace{-.7em}
\begin{figure}
  \includegraphics[scale=0.2]{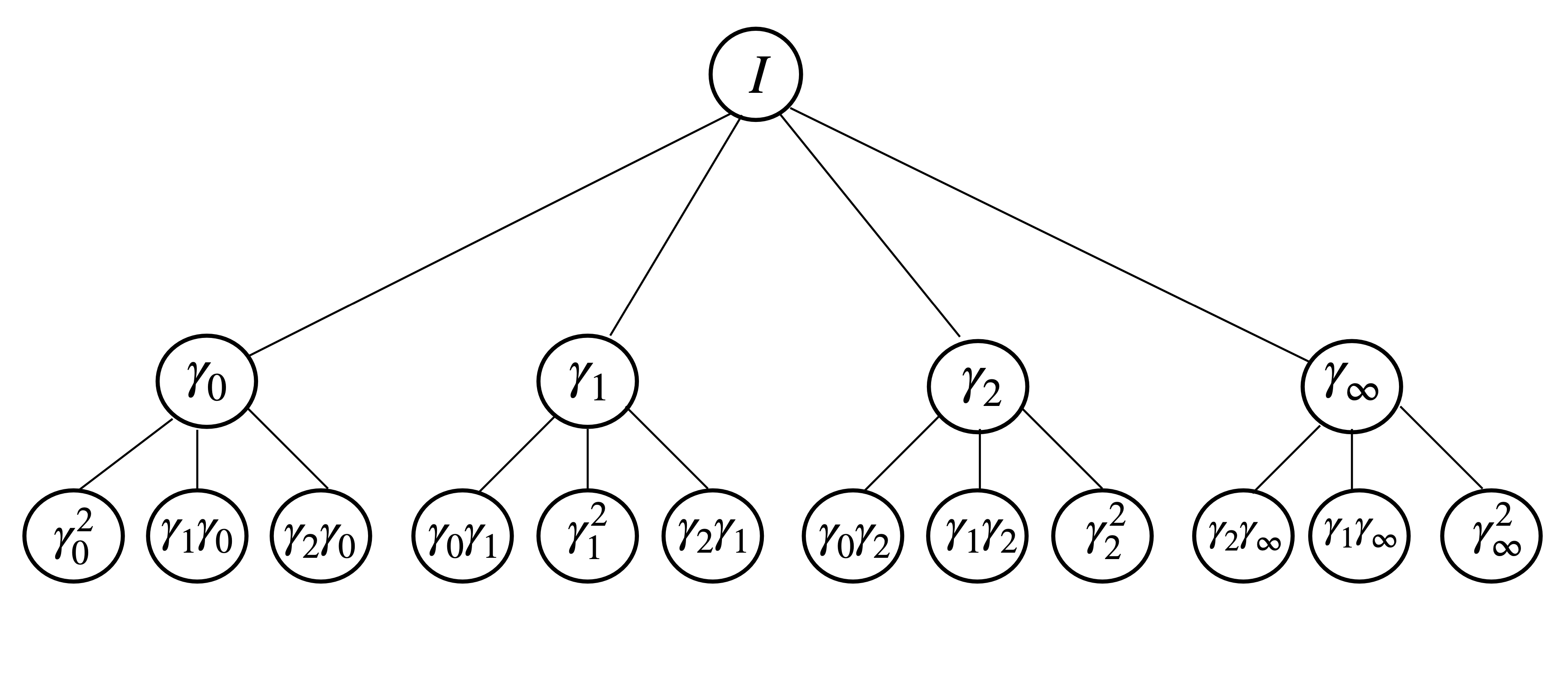}
  \caption{The (truncated) Bruhat-Tits tree for $q=3$, with vertices labelled by the associated matrices. The root of the tree, labelled $I$, corresponds to $M_2(\ZZ_q)$. The vertex labelled with matrix $T$ corresponds to the order $T^{-1}M_2(\ZZ_q)T.$}
\end{figure}

\begin{prop}\label{distmatrices} Let $T = b_r \cdots b_2 b_1 a_k \cdots a_2a_1$ and $T' = c_s \cdots c_{1} a_k \cdots a_2 a_1$, such that $a_i, b_i, c_i \in \Sigma$, the product of two consecutive matrices is not in $M_2(q \ZZ_q)$, and $b_1 \neq c_1$. Then \[d(T^{-1} M_2(\ZZ_q) T, T'^{-1} M_2(\ZZ_q) T') = r+s.\]
\end{prop}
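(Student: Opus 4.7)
The plan is to exploit the tree structure of the Bruhat--Tits tree together with the bijection between length-$n$ paths from the root and matrix paths of length $n$ recorded just above the proposition.

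First, I would use this bijection to identify the unique length-$(k+r)$ path from $M_2(\ZZ_q)$ to $T^{-1}M_2(\ZZ_q)T$ as the one with matrix path $\{a_1,\ldots,a_k,b_1,\ldots,b_r\}$, and the unique length-$(k+s)$ path from $M_2(\ZZ_q)$ to $T'^{-1}M_2(\ZZ_q)T'$ as the one with matrix path $\{a_1,\ldots,a_k,c_1,\ldots,c_s\}$. Both sequences are valid matrix paths by the hypotheses on $T$ and $T'$, and they share the common initial segment $\{a_1,\ldots,a_k\}$. Hence both paths from the root pass, after $k$ steps, through the same vertex $v_k := (a_k\cdots a_1)^{-1} M_2(\ZZ_q)(a_k\cdots a_1)$.

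The key step is to verify that these two paths diverge at $v_k$: the neighbor of $v_k$ reached by appending $b_1$ to the common prefix differs from the one reached by appending $c_1$. Suppose otherwise; then the two length-$(k+1)$ matrix paths $\{a_1,\ldots,a_k,b_1\}$ and $\{a_1,\ldots,a_k,c_1\}$, which are distinct because $b_1 \neq c_1$, would yield length-$(k+1)$ paths from the root with the same terminal vertex. Since a path from the root in the tree is determined by its endpoint (equivalently, by the bijection with matrix paths), this contradicts injectivity.

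Concatenating the reverse of the first path (from $T^{-1}M_2(\ZZ_q)T$ up to $v_k$, length $r$) with the second path (from $v_k$ down to $T'^{-1}M_2(\ZZ_q)T'$, length $s$) produces a non-backtracking walk of length $r+s$ between the two target vertices; non-backtracking at the junction $v_k$ holds precisely because the incoming and outgoing edges there correspond to the distinct neighbors determined by $b_1$ and $c_1$, as just established. Since the Bruhat--Tits tree is a tree, the unique non-backtracking walk between two vertices realizes their distance, so $d(T^{-1}M_2(\ZZ_q)T,\, T'^{-1}M_2(\ZZ_q)T') = r+s$. The main obstacle is the divergence step; once that is in hand, the tree structure makes the rest immediate.
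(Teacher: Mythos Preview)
Your argument is correct and follows essentially the same approach as the paper: both proofs identify the common prefix vertex $v_k=(a_k\cdots a_1)^{-1}M_2(\ZZ_q)(a_k\cdots a_1)$, observe that the two root-paths diverge there because $b_1\neq c_1$, and concatenate the two tails to obtain a non-backtracking path of length $r+s$. Your treatment of the divergence step via the bijection with matrix paths is a bit more explicit than the paper's, which simply asserts that the two root-paths intersect exactly in the common prefix.
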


\begin{proof} Let $\gamma = a_k \cdots a_1.$ The unique path from $T^{-1} M_2(\ZZ_q) T$ to $M_2(\ZZ_q)$
  and the unique path from $T'^{-1} M_2(\ZZ_q) T'$
  intersect exactly in the path from $\gamma^{-1} M_2(\ZZ_q) \gamma$
  to $M_2(\ZZ_q).$ Thus, we obtain a (nonbacktracking) path from
  $T^{-1} M_2(\ZZ_q) T$ to $T'^{-1} M_2(\ZZ_q) T'$ by first taking the path
  of length $r$ from $T^{-1} M_2(\ZZ_q) T$ to $\gamma^{-1} M_2(\ZZ_q)
  \gamma$, and concatenating it with the path of length $s$ from $\gamma^{-1} M_2(\ZZ_q) \gamma$ to
  $T'^{-1} M_2(\ZZ_q) T'$.
\end{proof}

We can relate the reduced discriminant of an order to the distance
between maximal orders containing it.

\begin{lem}\label{discdist}
Let $\Lambda$ be a $\ZZ_q$-order in $M_2(\QQ_q)$ such that $\Lambda \subset \Lambda_1 \cap \Lambda_2$ for maximal orders $\Lambda_1, \Lambda_2$. Then $v_q(\discrd(\Lambda)) \geq v_q(\discrd(\Lambda_1 \cap \Lambda_2)).$ 
\end{lem}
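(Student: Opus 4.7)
The plan is to use the standard relationship between the discriminant of an order and the discriminant of any suborder, namely that the discriminants differ by the square of the index.

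First I would set $\Lambda' = \Lambda_1 \cap \Lambda_2$, which is itself a $\ZZ_q$-order in $M_2(\QQ_q)$ (it is finitely generated, closed under multiplication, and spans the algebra over $\QQ_q$ since it contains all of $\Lambda$). Since $\Lambda \subseteq \Lambda'$, both are full $\ZZ_q$-lattices in $M_2(\QQ_q)$, so the index $[\Lambda' : \Lambda]$ is finite (a power of $q$), and one has the standard identity
\[
\disc(\Lambda) \;=\; [\Lambda' : \Lambda]^{2}\,\disc(\Lambda')
\]
(see e.g.\ \cite[15.4.7]{Voight2021}). Taking $q$-adic valuations yields
\[
v_q(\disc(\Lambda)) \;=\; 2\,v_q([\Lambda':\Lambda]) + v_q(\disc(\Lambda')),
\]
and since the index is a nonnegative power of $q$, the first summand is nonnegative.

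Next, I would pass from $\disc$ to $\discrd$. By definition, $\discrd$ is the positive square root of $\disc$, so $v_q(\disc(\mathcal{O})) = 2\, v_q(\discrd(\mathcal{O}))$ for any order $\mathcal{O}$. Dividing the displayed equation by $2$ gives
\[
v_q(\discrd(\Lambda)) \;=\; v_q([\Lambda':\Lambda]) + v_q(\discrd(\Lambda')) \;\geq\; v_q(\discrd(\Lambda_1 \cap \Lambda_2)),
\]
as claimed.

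There is no real obstacle here; the only subtle point is that everything is being done locally at $q$ (so one should either first localize or simply observe that the discriminant formula holds over any Dedekind domain, in particular over $\ZZ_q$). The entire argument rests on the multiplicativity of discriminants in chains of orders plus the positivity of the index.
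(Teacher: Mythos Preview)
Your proof is correct and follows essentially the same approach as the paper: both use the index--discriminant formula for a suborder inside a superorder and conclude from the nonnegativity of $v_q$ of the index. The paper cites \cite[Lemma 15.2.15]{Voight2021} to write directly $\discrd(\Lambda) = [\Lambda_1 \cap \Lambda_2 : \Lambda]\,\discrd(\Lambda_1 \cap \Lambda_2)$, whereas you derive the same relation by first using the $\disc$ version and then halving valuations, but this is a cosmetic difference.
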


\begin{proof} By \cite[Lemma 15.2.15]{Voight2021},
  $\discrd(\Lambda) = [\Lambda_1 \cap \Lambda_2:\Lambda]
  \discrd(\Lambda_1 \cap \Lambda_2).$ The index is a positive integer, so
  $v_q(\discrd(\Lambda)) \geq v_q(\discrd(\Lambda_1 \cap \Lambda_2)).$
\end{proof}

From the lemma, we obtain the two following useful corollaries.

\begin{cor}\label{cor:dist} Suppose $\Lambda \subset M_2(\QQ_q)$ is a $\ZZ_q$-order. Let $e=v_q(\discrd(\Lambda)).$ If
  $\Lambda$ is contained in two maximal orders $\Lambda_1$ and
  $\Lambda_2$, then $d(\Lambda_1,\Lambda_2) \leq e$.
\end{cor}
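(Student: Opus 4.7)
The plan is to combine Lemma~\ref{discdist} with an explicit computation of the reduced discriminant of the intersection of two maximal orders at distance $n$ in the Bruhat-Tits tree. By the lemma, we immediately have
\[v_q(\discrd(\Lambda)) \geq v_q(\discrd(\Lambda_1 \cap \Lambda_2)),\]
so it suffices to show that $v_q(\discrd(\Lambda_1 \cap \Lambda_2)) = d(\Lambda_1, \Lambda_2)$. This would give $d(\Lambda_1, \Lambda_2) = v_q(\discrd(\Lambda_1 \cap \Lambda_2)) \leq v_q(\discrd(\Lambda)) = e$, which is exactly the claim.

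To prove this equality, I would use the matrix labelling set up in Section~\ref{sec:distance}. By conjugating (which preserves both the distance and the reduced discriminant of the intersection), we may assume $\Lambda_1 = M_2(\ZZ_q)$ and $\Lambda_2 = T^{-1} M_2(\ZZ_q) T$, where $T = \begin{pmatrix} q^a & c \\ 0 & q^b \end{pmatrix}$ normalized as in the excerpt so that $n \colon= a+b = d(\Lambda_1, \Lambda_2)$. Then $\Lambda_1 \cap \Lambda_2$ is an Eichler order (by definition), and a direct $\ZZ_q$-basis computation shows that $\Lambda_1 \cap \Lambda_2$ consists of matrices $\begin{pmatrix} x & y \\ z & w \end{pmatrix} \in M_2(\ZZ_q)$ satisfying certain divisibility conditions forcing $v_q(z) \geq n$ (after a suitable change of basis). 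From this explicit $\ZZ_q$-basis one computes the Gram matrix of reduced traces and finds $v_q(\discrd(\Lambda_1 \cap \Lambda_2)) = n$.

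Alternatively, and more cleanly, one can cite the standard fact that an Eichler order in $M_2(\QQ_q)$ is classified up to conjugation by its level $q^n$, which coincides with the distance between the two maximal orders in whose intersection it is written, and whose reduced discriminant has $q$-valuation exactly $n$ (see \cite[Chapter 23]{Voight2021}). Either route quickly completes the proof.

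The main (minor) obstacle is just pinning down the identity $v_q(\discrd(\Lambda_1 \cap \Lambda_2)) = d(\Lambda_1,\Lambda_2)$; once this is in hand, the corollary is immediate from Lemma~\ref{discdist}. Everything else is formal.
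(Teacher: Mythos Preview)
Your proposal is correct and follows essentially the same approach as the paper: both reduce to Lemma~\ref{discdist} together with the identity $d(\Lambda_1,\Lambda_2) = v_q(\discrd(\Lambda_1 \cap \Lambda_2))$. The paper's proof simply asserts this identity as known, whereas you sketch two ways to establish it (an explicit basis computation or the standard Eichler order classification in \cite[Chapter~23]{Voight2021}); either justification is fine and in fact gives more detail than the paper provides.
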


\begin{proof} When $\Lambda_1$ and $\Lambda_2$ are maximal orders, the
  distance between them is 
  $v_q(\discrd(\Lambda_1 \cap \Lambda_2))$ (\cite[Exercise 23.9]{Voight2021}, see correction \cite[page 11]{VoightErrata}). The result then follows
  from Lemma~\ref{discdist}.
\end{proof}

\begin{cor} Let $\Lambda$ be a $\ZZ_q$-order of finite index in $M_2(\QQ_q)$. Then $\Lambda$ is contained in finitely many maximal orders. 
\end{cor}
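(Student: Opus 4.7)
The plan is to combine Corollary~\ref{cor:dist} with the local finiteness of the Bruhat-Tits tree. First I would note that $\Lambda$ is contained in at least one maximal order: since $\Lambda\otimes\QQ_q = M_2(\QQ_q)$, any order $\Lambda'$ containing $\Lambda$ satisfies $\discrd(\Lambda) = [\Lambda':\Lambda]\cdot\discrd(\Lambda')$ by \cite[Lemma 15.2.15]{Voight2021}, so the index $[\Lambda':\Lambda]$ divides the integer $\discrd(\Lambda)$; thus orders containing $\Lambda$ form a finite-height poset with respect to inclusion, and any maximal element is a maximal $\ZZ_q$-order.

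Fix one such maximal order $\Lambda_1 \supseteq \Lambda$, and set $e = v_q(\discrd(\Lambda))$. If $\Lambda_2$ is any other maximal order containing $\Lambda$, then $\Lambda \subseteq \Lambda_1 \cap \Lambda_2$, so by Corollary~\ref{cor:dist} we have $d(\Lambda_1,\Lambda_2) \leq e$ in the Bruhat-Tits tree. Hence every maximal order containing $\Lambda$ lies in the $e$-neighborhood $N_e(\Lambda_1)$.

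Finally, since the Bruhat-Tits tree for $GL_2(\QQ_q)$ is $(q+1)$-regular, the neighborhood $N_e(\Lambda_1)$ contains at most $1 + (q+1)\sum_{i=0}^{e-1} q^i$ vertices, which is finite. Therefore the set of maximal orders containing $\Lambda$ is finite, as claimed.

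The only slightly delicate point is the preliminary existence of a maximal order above $\Lambda$, which I would dispatch by the index-discriminant argument above; beyond that, the proof is essentially a one-line application of Corollary~\ref{cor:dist} together with regularity of the tree.
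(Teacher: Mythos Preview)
Your proof is correct and follows essentially the same approach as the paper: fix a maximal order $\Lambda_1 \supseteq \Lambda$, invoke Corollary~\ref{cor:dist} to bound the distance to any other maximal order containing $\Lambda$, and conclude by finiteness of balls in the $(q+1)$-regular tree. The paper's proof is a one-liner that takes the existence of $\Lambda_1$ for granted, whereas you supply an explicit index--discriminant justification for it; your vertex count for $N_e(\Lambda_1)$ also matches the formula the paper records in the remark immediately following the corollary.
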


\begin{proof} This is immediate from Corollary~\ref{cor:dist}. If $\Lambda_1$ is a maximal order containing $\Lambda$, then all maximal orders containing $\Lambda$ are at most $v_q(\discrd(\Lambda))$ steps from $\Lambda_1$.
\end{proof}

\begin{rmk}
 Suppose $\Lambda \subset \End(E) \otimes \ZZ_q$.
  If we can construct a maximal order $\Lambda_1$ which contains
  $\Lambda$, the preceding corollaries give us a starting point for
  how to locate $\End(E) \otimes \ZZ_q$ in the Bruhat-Tits tree. 
A naive approach would be to check all orders within $e=v_q(\discrd(\Lambda))$ 
steps from $\Lambda_1$ in the Bruhat-Tits tree. 
However, when $e \geq 1$, there are $1 + (q+1) \frac{q^{e} - 1}{q - 1}$ maximal orders at
  most $e$ steps from $\Lambda_1$. Working with each of these orders
  is computationally infeasible for general $\Lambda$. 
  \end{rmk}

\subsection{Finite intersections of maximal orders} 
In this section, we use Tu's results on finite intersections of
maximal orders in $M_2(\QQ_q)$ and our framework of neighborhoods to
describe the set of maximal orders containing such an intersection as the $\ell$-neighborhood
of  a path.
Our main result is Corollary~\ref{cor:Lambdatilde}, which allows us to work with many maximal orders at once.

We give a definition we will use throughout the paper.

\begin{defn}\cite[Notation~7]{Tu11}
Let $S$ be a finite set of maximal orders. We define \[d_3(S) := \max\{d(\Lambda_1, \Lambda_2) + d(\Lambda_2, \Lambda_3) + d(\Lambda_3, \Lambda_1)\},\] where the maximum is taken over all choices of $\Lambda_1, \Lambda_2, \Lambda_3 \in S$. The orders $\Lambda_i$ need not be distinct.
\end{defn}

We restate Tu's main theorem, specialized to our case $K = \QQ_q$.

\begin{thm}\cite[Theorem 8]{Tu11}\label{TuMainTheorem}
Let $S$ be a finite set of maximal orders in $M_2(\QQ_q)$. Let
$\Lambda_1, \Lambda_2, \Lambda_3 \in S$ be such that $d_3(\{\Lambda_1,\Lambda_2, \Lambda_3\}) = d_3(S)$. Then $\bigcap_{\Lambda \in S} \Lambda = \Lambda_1 \cap \Lambda_2 \cap \Lambda_3$. The orders $\Lambda_1, \Lambda_2, \Lambda_3$ need not be distinct.
\end{thm}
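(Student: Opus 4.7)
The inclusion $\bigcap_{\Lambda \in S} \Lambda \subseteq \Lambda_1 \cap \Lambda_2 \cap \Lambda_3$ is immediate, so the plan is to prove the reverse by induction on $|S|$, with $|S| \leq 3$ trivial. For the inductive step with $|S| \geq 4$, fix any $v \in S \setminus \{v_1, v_2, v_3\}$, where I write $v_i$ for the vertex of the Bruhat-Tits tree corresponding to $\Lambda_i$. Because $\{v_1,v_2,v_3\} \subseteq S \setminus \{v\}$ and $d_3$ is monotone in the underlying set, this triple remains a maximizer of $d_3$ on $S \setminus \{v\}$. The inductive hypothesis then gives $\bigcap_{u \in S\setminus\{v\}} \Lambda_u = \Lambda_1 \cap \Lambda_2 \cap \Lambda_3$, reducing the whole statement to showing $\Lambda_v \supseteq \Lambda_1 \cap \Lambda_2 \cap \Lambda_3$ in the four-vertex case.

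In that four-vertex case, let $m$ be the median of $v_1, v_2, v_3$ in the Bruhat-Tits tree (the unique vertex lying on all three pairwise geodesics), set $a_i = d(m, v_i)$, and let $p$ be the closest tripod point to $v$ with $k = d(v, p)$. First I would rule out configurations using the maximality of $d_3$: if $p = v_i$ and $k \geq 1$, then a direct tree calculation gives $d_3(\{v, v_j, v_\ell\}) = d_3(\{v_1, v_2, v_3\}) + 2k$ for $\{j,\ell\} = \{1,2,3\}\setminus\{i\}$, contradicting maximality. The only admissible configurations are therefore: $v$ lies on the tripod; or $v$ projects to an interior point of an arm to some $v_i$, in which case maximality forces $k + s \leq a_i$ with $s = d(m, p)$ and $k \leq a_j$ for $j \neq i$; or $v$ projects to $p = m$, with $k \leq \min_i a_i$.

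Next I would reduce to a computation inside $m$ itself. Each pairwise intersection $\Lambda_i \cap \Lambda_j$ is an Eichler order whose set of containing maximal orders is exactly the geodesic from $v_i$ to $v_j$, and $m$ lies on every such geodesic, so $\Lambda_1 \cap \Lambda_2 \cap \Lambda_3 \subseteq m$. Similarly $\Lambda_v \cap m$ is an Eichler order of level $q^{d(v,m)}$ corresponding to the geodesic from $m$ to $v$, and it suffices to establish $\Lambda_v \cap m \supseteq \Lambda_1 \cap \Lambda_2 \cap \Lambda_3$. After conjugating so that $m = M_2(\ZZ_q)$, the three arms correspond to distinct directions $\ell_1, \ell_2, \ell_3 \in \PP^1(\FF_q)$, and both orders can be written explicitly as $2\times 2$ matrices satisfying congruences indexed by these directions and their lengths. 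The key observation to close the argument is that three distinct directions in $\PP^1(\FF_q)$ already force the reduction mod $q$ of $\Lambda_1 \cap \Lambda_2 \cap \Lambda_3$ to consist only of scalars; combined with the length bounds derived in the previous paragraph, this will imply that every congruence defining $\Lambda_v \cap m$ is automatically satisfied.

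The hardest step will be this last congruence verification. It is the technical core of Tu's original argument in~\cite[Theorem 8]{Tu11} and requires tracking the interaction between the three distinct directions $\ell_1, \ell_2, \ell_3$ and the bounds on $k$ and $s$ case-by-case, across the tripod, center-projection, and interior-arm-projection configurations.
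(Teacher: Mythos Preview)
The paper does not give its own proof of this theorem: it is simply quoted as \cite[Theorem~8]{Tu11}, so there is no in-paper argument to compare against. Your inductive reduction to the four-vertex statement is exactly the content of the paper's Lemma~\ref{Tu} (which the paper also only cites, as \cite[Lemma~12]{Tu11}), and the induction wrapping it is correct as written.

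On the four-vertex core itself, your tripod analysis and the derived inequalities ($k+s\le a_i$ when $p$ lies on the $v_i$-arm, $k\le a_j$ for $j\ne i$, and $k\le\min_i a_i$ when $p=m$) are correct; they come from comparing $d_3$ on the three triples containing $v$ with $d_3(\{v_1,v_2,v_3\})=2(a_1+a_2+a_3)$. One small comment: after conjugating so that $m=M_2(\ZZ_q)$, you will not always have three \emph{distinct} directions $\ell_1,\ell_2,\ell_3\in\PP^1(\FF_q)$, since some $a_i$ may vanish (e.g.\ when the intersection is Eichler); you will need to treat those degenerate tripods separately, but they are easier. Apart from that, the matrix congruence check you defer really is the substance of Tu's proof, and your outline is aligned with it.
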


Our first lemma relates $d_3(S)$ to the reduced discriminant $\discrd(\bigcap_{\Lambda \in S} \Lambda)$.

\begin{lem}\label{d3calc} Let $S$ be a finite set of maximal orders in $M_2{(\mathbb{Q}_q)}$. Then \[v_q(\discrd(\bigcap_{\Lambda \in S} \Lambda)) = d_3(S)/2.\]
\end{lem}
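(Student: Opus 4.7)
The plan is to first apply Tu's theorem (Theorem~\ref{TuMainTheorem}) to reduce to an intersection of at most three maximal orders, then use the tree geometry to identify $d_3(S)/2$ with a concrete geometric quantity, and finally compute the reduced discriminant of the resulting intersection.

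First I would pick $\Lambda_1, \Lambda_2, \Lambda_3 \in S$ achieving $d_3(S)$. By Theorem~\ref{TuMainTheorem}, $\bigcap_{\Lambda \in S} \Lambda = \Lambda_1 \cap \Lambda_2 \cap \Lambda_3$, and clearly $d_3(\{\Lambda_1, \Lambda_2, \Lambda_3\}) = d_3(S)$, so we may replace $S$ by $\{\Lambda_1, \Lambda_2, \Lambda_3\}$. If all three coincide the equality is $0 = 0$; if exactly two are distinct then $d_3(S) = 2 d(\Lambda_1, \Lambda_2)$ while $\Lambda_1 \cap \Lambda_2$ is an Eichler order with $v_q(\discrd(\Lambda_1 \cap \Lambda_2)) = d(\Lambda_1, \Lambda_2)$, by the standard correspondence between Eichler orders of level $q^n$ in $M_2(\QQ_q)$ and paths of length $n$ in the Bruhat-Tits tree.

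For three distinct vertices, I would invoke the median structure of trees: there is a unique vertex $v_0$ lying on each pairwise geodesic. Writing $a_i := d(v_0, \Lambda_i)$, we get $d(\Lambda_i, \Lambda_j) = a_i + a_j$, hence $d_3(S)/2 = a_1 + a_2 + a_3$, which is also the number of edges in the convex hull (tripod) of $\{\Lambda_1, \Lambda_2, \Lambda_3\}$. Applying Lemma~\ref{discdist} to the inclusion $\Lambda_1 \cap \Lambda_2 \cap \Lambda_3 \subseteq \Lambda_1 \cap \Lambda_2$ together with the Eichler case just established reduces the lemma to proving
\[v_q\bigl([\Lambda_1 \cap \Lambda_2 : \Lambda_1 \cap \Lambda_2 \cap \Lambda_3]\bigr) = a_3.\]

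To evaluate this index I would conjugate by an element of $\GL_2(\QQ_q)$ so that $v_0 = M_2(\ZZ_q)$ (preserving distances, intersections, and reduced discriminants). The three geodesics from $v_0$ then start in three distinct directions of $\PP^1(\FF_q)$ by the defining property of the median. I would then induct on $a_3$: the base case $a_3 = 0$ gives $\Lambda_3 = v_0 \supseteq \Lambda_1 \cap \Lambda_2$ and the index is $1$; for the inductive step, extend the matrix path toward $\Lambda_3$ by one edge (Definition~\ref{def:matrixpath}) and show that the new edge imposes exactly one additional congruence modulo $q$ on elements of the current intersection, multiplying the index by $q$. The main obstacle is verifying this inductive step cleanly, i.e., confirming that each new edge yields exactly one factor of $q$ (neither more nor less). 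The key inputs are the distinctness of initial directions at $v_0$, which guarantees that the first edge toward $\Lambda_3$ leaves the $\Lambda_1$--$\Lambda_2$ geodesic, and the non-backtracking condition on matrix paths, which forces each subsequent edge to impose a congruence independent of the prior ones.
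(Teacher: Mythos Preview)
Your approach is correct but genuinely different from the paper's. After the common reduction via Theorem~\ref{TuMainTheorem} and the shared treatment of the degenerate and Eichler cases, the paper simply quotes an explicit $\ZZ_q$-basis for $\Lambda_1\cap\Lambda_2\cap\Lambda_3$ from the proof of \cite[Theorem~2]{Tu11} and reads off the discriminant in one determinant computation. You instead introduce the median vertex $v_0$, rewrite $d_3(S)/2$ as the tripod edge-count $a_1+a_2+a_3$, and reduce everything to the index identity $v_q\bigl([\Lambda_1\cap\Lambda_2:\Lambda_1\cap\Lambda_2\cap\Lambda_3]\bigr)=a_3$, which you then attack by induction on $a_3$ after normalizing $v_0=M_2(\ZZ_q)$.

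What each approach buys: the paper's route is shorter and purely computational, but it relies on an external result (Tu's explicit basis) that already encodes the tripod geometry. Your route is more self-contained---it needs only Theorem~\ref{TuMainTheorem} and the standard Eichler level formula---and it makes transparent \emph{why} the answer is the number of edges in the convex hull. The price is that your inductive step (``each new edge toward $\Lambda_3$ contributes exactly one factor of $q$ to the index'') is not yet a proof: you have correctly identified the key inputs (the first edge leaves the $\Lambda_1$--$\Lambda_2$ geodesic by the median property, and non-backtracking keeps subsequent congruences independent), but turning this into a rigorous argument still requires an explicit calculation---for instance, fixing the three initial directions at $0,1,\infty\in\PP^1(\FF_q)$ via the $3$-transitive $\mathrm{PGL}_2(\FF_q)$-action and writing down the intersections concretely, much as the paper does in Proposition~\ref{defNr} for the symmetric case $a_1=a_2=a_3$. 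Once you do that computation, you will in effect have recovered Tu's basis yourself, so the two proofs converge at the end. One minor caveat: your phrase ``three distinct directions'' tacitly assumes all $a_i>0$; when some $a_i=0$ (the collinear case) the argument still works, but you should note it is covered either by the base case $a_3=0$ or by observing that the first edge toward $\Lambda_3$ still leaves the $\Lambda_1$--$\Lambda_2$ geodesic whenever $a_3\geq 1$.
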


\begin{proof} By Theorem~\ref{TuMainTheorem}, there are orders $\Lambda_1, \Lambda_2, \Lambda_3 \in S$ such that $\cap_{\Lambda \in S} \Lambda = \Lambda_1 \cap \Lambda_2 \cap \Lambda_3$. There are two cases: that $\Lambda_1, \Lambda_2,$ and $\Lambda_3$ lie along a path in the Bruhat-Tits tree, or they do not.

In the first case, say that $\Lambda_3$ lies on the path between $\Lambda_1$ and $\Lambda_2$. In that case, $d(\Lambda_1, \Lambda_2) = d(\Lambda_1, \Lambda_3) + d(\Lambda_3, \Lambda_2)$, so $d_3(S) = 2 d(\Lambda_1, \Lambda_2)$.  Since the vertices between $\Lambda_1$ and $\Lambda_2$ are the maximal superorders containing $\Lambda_1 \cap \Lambda_2$ (this follows from the argument in \cite[proof of Lemma 12, page 1144]{Tu11}), we have $\Lambda_1 \cap \Lambda_2 \cap \Lambda_3 = \Lambda_1 \cap \Lambda_2$. As $v_q(\discrd(\Lambda_1 \cap \Lambda_2)) = d(\Lambda_1, \Lambda_2)$, the result follows.

In the remaining case, the paths between each pair of the $\Lambda_i$ intersect in a single vertex, $\Lambda_0$. Let $m = d(\Lambda_1, \Lambda_0), n = d(\Lambda_2, \Lambda_0), \ell = d(\Lambda_3, \Lambda_0)$ and assume $m \geq n \geq \ell \geq 0$. Note that since each path passes through $\Lambda_0$, we have $d(\Lambda_1, \Lambda_2) = m+n$, $d(\Lambda_2, \Lambda_3) = n + \ell$, and $d(\Lambda_3, \Lambda_1) = \ell + m$. This shows that $d_3(S) = 2m + 2n+2\ell$.

The intersection $\Lambda_1 \cap \Lambda_2 \cap \Lambda_3$ is conjugate to the order with basis \[ \biggl\{ \begin{pmatrix} 1 & 0\\0 & 1\end{pmatrix}, \begin{pmatrix} 0 & q^n\\0 & 0\end{pmatrix}, \begin{pmatrix} 0 & 0\\q^m & 0\end{pmatrix}, \begin{pmatrix} 0 & 0\\0 & q^\ell\end{pmatrix} \biggl\}.\]  (See \cite[proof of Theorem 2]{Tu11} for details.) As $\disc(\{\alpha_1, \alpha_2, \alpha_3, \alpha_4\}) = |\det(\Trd(\alpha_i \alpha_j))|,$ a computation shows \[\disc(\Lambda_1\cap\Lambda_2\cap\Lambda_3) = q^{2m+2n+2\ell} = q^{d_3(S)}.\] Hence $v_{q}(\discrd(\Lambda_1 \cap \Lambda_2 \cap \Lambda_3)) = d_3(S)/2.$
\end{proof}

We'll also use the following lemma which is key to the proof of Tu's Theorem 8.

\begin{lem}\cite[Lemma 12]{Tu11}\label{Tu} Let
  $S = \{\Lambda_1, \Lambda_2, \Lambda_3\}$ be a set of maximal orders, and let $\Lambda_4$ be a
  maximal order such that $d_3(S \cup \{\Lambda_4\}) = d_3(S).$ Then
  $\Lambda_4 \supset \Lambda_1 \cap \Lambda_2 \cap \Lambda_3.$
\end{lem}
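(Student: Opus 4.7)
The plan is to deduce the lemma directly from Lemma~\ref{d3calc} by comparing reduced discriminants of the two relevant intersections. Setting $\mathcal{O} := \Lambda_1 \cap \Lambda_2 \cap \Lambda_3$ and $\mathcal{O}' := \mathcal{O} \cap \Lambda_4$, the conclusion $\Lambda_4 \supseteq \mathcal{O}$ is equivalent to the equality $\mathcal{O}' = \mathcal{O}$, so it suffices to verify the latter.

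First, I would record the tautological inclusion $\mathcal{O}' \subseteq \mathcal{O}$, where both sides are $\ZZ_q$-orders of full rank in $M_2(\QQ_q)$. Next, I would apply Lemma~\ref{d3calc} twice, once to $S = \{\Lambda_1,\Lambda_2,\Lambda_3\}$ and once to $S \cup \{\Lambda_4\}$, to obtain
\[
v_q(\discrd(\mathcal{O})) = \tfrac{1}{2}\,d_3(S) \quad\text{and}\quad v_q(\discrd(\mathcal{O}')) = \tfrac{1}{2}\,d_3(S \cup \{\Lambda_4\}).
\]
The standing hypothesis $d_3(S \cup \{\Lambda_4\}) = d_3(S)$ then forces $\discrd(\mathcal{O}) = \discrd(\mathcal{O}')$ as ideals in $\ZZ_q$.

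To finish, I would invoke the index-discriminant relation $\discrd(\mathcal{O}') = [\mathcal{O}:\mathcal{O}']\cdot \discrd(\mathcal{O})$ from \cite[Lemma 15.2.15]{Voight2021} (already used in the proof of Lemma~\ref{discdist}). Equality of the reduced discriminants forces $[\mathcal{O}:\mathcal{O}'] = 1$, hence $\mathcal{O} = \mathcal{O}'$, and the desired containment follows.

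The main subtlety lies in invoking Lemma~\ref{d3calc} for a set of four maximal orders: the proof of that lemma reduces the general case to a single order, an Eichler order, or a genuine three-fold intersection via Theorem~\ref{TuMainTheorem}, so one must be comfortable that this structural input is available here. Once Lemma~\ref{d3calc} is accepted in full generality, the argument is a short discriminant calculation with no further obstacles.
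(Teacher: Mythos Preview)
The paper does not supply its own proof of this lemma; it simply cites \cite[Lemma~12]{Tu11}. Your argument is in fact the mirror image of the paper's proof of the converse, Lemma~\ref{TuConverse}, and as a formal deduction inside the paper's logical framework it is correct.

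That said, the subtlety you flag at the end is a genuine circularity, not just a matter of comfort. Lemma~\ref{d3calc}, applied to the four-element set $S\cup\{\Lambda_4\}$, rests on Theorem~\ref{TuMainTheorem} (Tu's Theorem~8) to rewrite the intersection as a triple intersection realising $d_3$. In Tu's paper, Theorem~8 is proved \emph{using} Lemma~12, i.e.\ the very statement you are proving. So your argument does not give an independent proof of the lemma: it shows only that once Theorem~\ref{TuMainTheorem} is accepted as a black box, Lemma~\ref{Tu} follows for free (and indeed the paper's separate citation of it becomes redundant). Tu's original proof instead argues directly with the tree geometry, establishing the containment without appealing to Theorem~8; that is what makes it usable as a building block for Theorem~8 in the first place.
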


The converse is also true, which we prove in the next Lemma.

\begin{lem}\label{TuConverse} Let $S = \{\Lambda_1, \Lambda_2,
  \Lambda_3\}$ be a set of maximal orders. Suppose $\Lambda_4 \supset \Lambda_1 \cap \Lambda_2 \cap \Lambda_3.$ Then $d_3(S) = d_3(S \cup \{\Lambda_4\}).$
\end{lem}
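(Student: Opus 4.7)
My plan is to reduce this directly to Lemma~\ref{d3calc}, which relates $d_3$ of a finite set of maximal orders to the reduced discriminant of their intersection. The point is that the hypothesis $\Lambda_4 \supset \Lambda_1 \cap \Lambda_2 \cap \Lambda_3$ forces the intersection over $S$ and the intersection over $S \cup \{\Lambda_4\}$ to coincide, so the two values of $d_3$ must match.

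Concretely, the steps are as follows. First, I apply Lemma~\ref{d3calc} to $S$ to obtain
\[
v_q\bigl(\discrd(\Lambda_1 \cap \Lambda_2 \cap \Lambda_3)\bigr) = d_3(S)/2.
\]
Second, I apply the same lemma to the set $S \cup \{\Lambda_4\}$, which gives
\[
v_q\bigl(\discrd(\Lambda_1 \cap \Lambda_2 \cap \Lambda_3 \cap \Lambda_4)\bigr) = d_3(S \cup \{\Lambda_4\})/2.
\]
Third, I observe that the containment $\Lambda_4 \supset \Lambda_1 \cap \Lambda_2 \cap \Lambda_3$ gives
\[
\Lambda_1 \cap \Lambda_2 \cap \Lambda_3 \cap \Lambda_4 = \Lambda_1 \cap \Lambda_2 \cap \Lambda_3,
\]
so the two reduced discriminants agree, and therefore $d_3(S) = d_3(S \cup \{\Lambda_4\})$.

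There is essentially no obstacle; all the real work has been done in Lemma~\ref{d3calc}. The only thing worth noting is that Lemma~\ref{d3calc} is stated for an arbitrary finite set of maximal orders (not only for triples), which is exactly what allows us to evaluate $d_3$ on the enlarged set $S \cup \{\Lambda_4\}$ without first identifying the triple in $S \cup \{\Lambda_4\}$ that achieves the maximum. If one instead tried to argue by directly manipulating the distances $d(\Lambda_i, \Lambda_j)$ for $i,j \in \{1,2,3,4\}$, one would have to control the position of $\Lambda_4$ inside the convex hull of $\{\Lambda_1,\Lambda_2,\Lambda_3\}$ in the Bruhat--Tits tree, which is more involved than the discriminant computation.
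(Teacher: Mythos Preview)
Your proof is correct and is essentially identical to the paper's own argument: both apply Lemma~\ref{d3calc} to $S$ and to $S \cup \{\Lambda_4\}$, use the hypothesis to identify the two intersections, and conclude equality of $d_3$.
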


\begin{proof}  By Lemma~\ref{d3calc}, $v_q(\discrd((\bigcap_{\Lambda \in S} \Lambda ) \cap \Lambda_4)) = d_3(S \cup \{\Lambda_4\})/2,$ and $v_q(\discrd(\bigcap_{\Lambda \in S} \Lambda)) = d_3(S)/2.$ But $\bigcap_{\Lambda \in S} \Lambda \subset \Lambda_4$ implies that $(\bigcap_{\Lambda \in S} \Lambda) \cap \Lambda_4 = \cap_{\Lambda_\in S} \Lambda$, hence the reduced discriminants are equal.  Again by Lemma~\ref{d3calc}, $d_3(S \cup \{\Lambda_4\}) = d_3(S).$
\end{proof}

\begin{cor}\label{cor:Lambdatilde}Let $P$ be the set of maximal orders along a path, and let $\ell \geq 0$. Let $\tilde{\Lambda} = \bigcap_{\Lambda \in N_\ell(P)} \Lambda$. Then the set of maximal orders containing $\tilde{\Lambda}$ is $N_{\ell}(P)$. Moreover, $v_q(\discrd(\tilde{\Lambda})) = 3\ell + \card(P)-1$. \end{cor}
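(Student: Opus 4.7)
My plan is to reduce both claims to computing $d_3(N_\ell(P))$ via Lemma \ref{d3calc}. Write $k := \card(P)$ for brevity. I will first show that $d_3(N_\ell(P)) = 6\ell + 2(k-1)$. The ``moreover'' statement then follows immediately: since $\tilde\Lambda = \bigcap_{\Lambda \in N_\ell(P)}\Lambda$, Lemma \ref{d3calc} gives $v_q(\discrd(\tilde\Lambda)) = d_3(N_\ell(P))/2 = 3\ell + k - 1$.

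For the upper bound on $d_3(N_\ell(P))$, given any $\Lambda_1, \Lambda_2, \Lambda_3 \in N_\ell(P)$, choose nearest points $u_1, u_2, u_3 \in P$. The triangle inequality gives $d(\Lambda_i, \Lambda_j) \leq 2\ell + d(u_i, u_j)$; summing and noting that the maximum of $d_3$ over triples on a path of $k$ vertices is $2(k-1)$ (achieved by picking the two endpoints, with multiplicity), we obtain $d_3(\{\Lambda_1,\Lambda_2,\Lambda_3\}) \leq 6\ell + 2(k-1)$. For the matching lower bound I exhibit an extremal tripod: take $\Lambda_1$ to be a vertex $\ell$ steps beyond one endpoint of $P$ (going away from $P$), $\Lambda_2$ similarly past the other endpoint, and $\Lambda_3$ at distance $\ell$ from an interior vertex of $P$ along a first step that leaves $P$. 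The valence $q+1 \geq 3$ guarantees such a perpendicular direction exists, and the three arm-lengths of the resulting Steiner tripod sum to $3\ell + (k-1)$, giving $d_3 = 6\ell + 2(k-1)$.

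For the first claim, the inclusion $N_\ell(P) \subseteq \{\Lambda' : \Lambda' \supseteq \tilde\Lambda\}$ is immediate from the definition of $\tilde\Lambda$. For the reverse inclusion, suppose $\Lambda'$ is a maximal order with $\tilde\Lambda \subseteq \Lambda'$. Then $\bigcap_{\Lambda \in N_\ell(P) \cup \{\Lambda'\}} \Lambda = \tilde\Lambda$, so Lemma \ref{d3calc} applied to both $N_\ell(P)$ and $N_\ell(P) \cup \{\Lambda'\}$ forces $d_3(N_\ell(P) \cup \{\Lambda'\}) = d_3(N_\ell(P)) = 6\ell + 2(k-1)$. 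To conclude $\Lambda' \in N_\ell(P)$, I would prove the contrapositive: if $d(\Lambda', P) \geq \ell + 1$, then one can build a triple in $N_\ell(P) \cup \{\Lambda'\}$ of total pairwise distance strictly greater than $6\ell + 2(k-1)$ by replacing the perpendicular arm of the extremal tripod above with the arm running out from the nearest vertex $v^* \in P$ to $\Lambda'$. The main obstacle is verifying strict improvement uniformly: either $v^*$ is an interior vertex of $P$, in which case $\Lambda'$ naturally plays the role of the perpendicular arm at $v^*$, or $v^*$ is an endpoint, in which case one must instead use $\Lambda'$ in place of the corresponding endpoint arm and keep the perpendicular arm, and in either case a direct distance computation shows the total exceeds $6\ell + 2(k-1)$ by at least $2$, giving the required contradiction.
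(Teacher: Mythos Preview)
Your approach is essentially the same as the paper's: both compute $d_3(N_\ell(P))=6\ell+2(\card(P)-1)$ via an upper bound from the triangle inequality and a lower bound from an explicit tripod, then deduce the discriminant from Lemma~\ref{d3calc}, and finally exclude any $\Lambda'\notin N_\ell(P)$ by exhibiting a triple with strictly larger $d_3$, splitting into the interior/endpoint cases for the foot $v^*\in P$. The only places to tighten are edge cases in the tripod construction: when $\card(P)\le 2$ there is no ``interior vertex,'' and when $\card(P)=1$ the two ``endpoints'' coincide, so (as the paper does) you should allow $\Lambda_3$ to branch from \emph{any} vertex of $P$ and rely on $q+1\ge 3$ to choose three pairwise edge-disjoint arms from that vertex; likewise the $\ell=0$ case degenerates and is worth stating separately.
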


\begin{proof} We will describe $\tilde{\Lambda}$ as an intersection of at most 3 maximal orders.

If $\ell = 0$, then $N_{\ell}(P) = P$. In this case, $\tilde{\Lambda} = \Lambda_1 \cap \Lambda_2$ where $\Lambda_1$ and $\Lambda_2$ are the endpoints of $P$. The only orders containing $\tilde{\Lambda}$ are those in $P$. We have $v_q(\discrd(\Lambda_1\cap \Lambda_2)) = d(\Lambda_1, \Lambda_2) = \card(P) - 1$. 

Now, assume $\ell > 0$. Let $\tilde{\Lambda}_1, \tilde{\Lambda}_2, \tilde{\Lambda}_3$ denote any choice of three orders in $N_{\ell}(P)$, and let $\tilde{\Lambda}_i'$ denote the order on the path $P$ which is closest to $\tilde{\Lambda}_i$.

By the triangle inequality, we have $d(\tilde{\Lambda}_i, \tilde{\Lambda}_j) \leq d(\tilde{\Lambda}_i, \tilde{\Lambda}_i') + d(\tilde{\Lambda}_i', \tilde{\Lambda}_j') + d(\tilde{\Lambda}_j', \tilde{\Lambda}_j) \leq 2 \ell + d(\tilde{\Lambda}_i', \tilde{\Lambda}_j').$ 
Hence $d_3(\{\tilde{\Lambda}_1, \tilde{\Lambda}_2, \tilde{\Lambda}_3\}) \leq 6\ell + d(\tilde{\Lambda}_1', \tilde{\Lambda}_2') + d(\tilde{\Lambda}_2', \tilde{\Lambda}_3') + d(\tilde{\Lambda}_3', \tilde{\Lambda}_1')$.
Since $\tilde{\Lambda}_1', \tilde{\Lambda}_2', \tilde{\Lambda}_3'$ lie along the same path $P$, this sum is at most $2(\card(P)-1)$. 
Hence $d_3(N_{\ell}(P)) \leq 6\ell + 2(\card(P)-1)$. 

Now, choose orders $\Lambda_i \in N_{\ell}(P)$ in the following way:
Choose a path of length $\ell$ starting at an endpoint $\Lambda_1'$ of $P$ which is otherwise disjoint from $P$; the end of this path will be $\Lambda_1$.
To construct $\Lambda_2$, choose a path of length $\ell$ which starts at the opposite endpoint $\Lambda_2'$ of $P$ and is otherwise disjoint from both $P$ and the path from $\Lambda_1$ to $\Lambda_1'$. As the Bruhat-Tits tree is $(q+1)$-regular and $q \geq 2$, this can be done. By construction, $d(\Lambda_1, \Lambda_2) = 2\ell + \card(P)-1$.

We then construct $\Lambda_3$ as follows. Choose a path of length $\ell$ starting at any point of $P$ which is otherwise disjoint from the paths $P$, the path from $\Lambda_1$ to $\Lambda_1'$, and the path from $\Lambda_2$ to $\Lambda_2'$. 
The path from $\Lambda_1$ to $\Lambda_1'$ and $\Lambda_2$ to $\Lambda_2'$ are automatically disjoint unless $P$ is a single point.
Thus, this disjointness restriction can be accomplished if and only if we can choose a path starting at any point of $P$ to avoid two adjacent edges. As the Bruhat-Tits tree is $(q+1)$-regular and $q \geq 2$, we can choose $\Lambda_1, \Lambda_2,$ and $\Lambda_3$ as specified. By construction, $d(\Lambda_1, \Lambda_3) + d(\Lambda_2, \Lambda_3) = d(\Lambda_1, \Lambda_2) + 2\ell = 4\ell + \card(P)-1$.

We summarize the choice of $\Lambda_1, \Lambda_2,$ and $\Lambda_3$ in Figure 2.

\vspace{-.7em}

\begin{figure}[h]
\centering \includegraphics[scale=0.2]{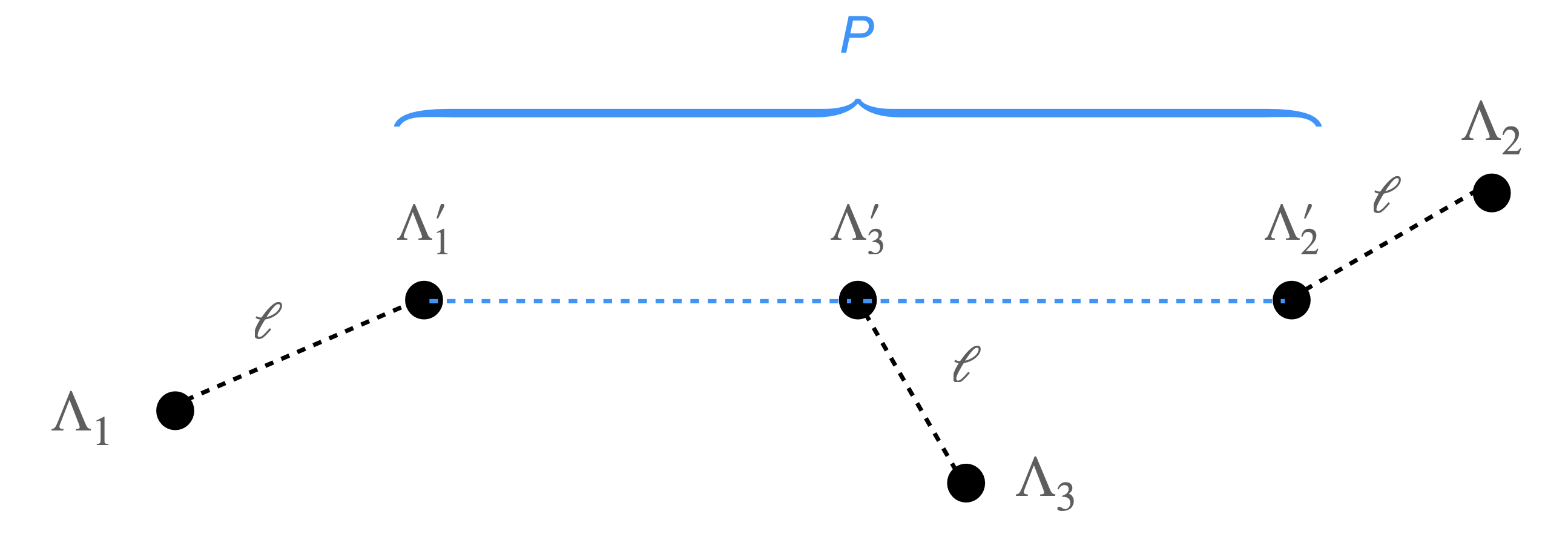} 
\caption{Constructing $\Lambda_1, \Lambda_2, \Lambda_3$ such that $\bigcap_{\Lambda \in N_{\ell}(P)}\Lambda = \Lambda_1 \cap \Lambda_2 \cap \Lambda_3.$}
\end{figure}

\vspace{-.7em}

For this choice of $\Lambda_1, \Lambda_2, \Lambda_3$, we have
$d_3(\{\Lambda_1, \Lambda_2, \Lambda_3\}) = 6\ell +
2(\card(P)-1)$. Thus, $d_3(N_{\ell}(P)) = 6\ell + 2(\card(P)-1)$.  By
Theorem~\ref{TuMainTheorem}, we can write
$\tilde{\Lambda} = \Lambda_1 \cap \Lambda_2 \cap \Lambda_3$, and by
Lemma~\ref{d3calc}, we have
$v_q(\discrd(\tilde{\Lambda})) = 3\ell + \card(P)-1$.

Now, we want to show that the set $S := \{\Lambda \text{ maximal} \colon \tilde{\Lambda} \subset \Lambda\}$ is equal to $N_{\ell}(P)$. It is clear that $N_{\ell}(P) \subset S$ by construction. Choose $\Lambda_4 \not \in N_{\ell}(P)$, so that $d(\Lambda_4, P) > \ell$. We will show that $d_3(\{\Lambda_1, \Lambda_2, \Lambda_3, \Lambda_4\}) > 6\ell + 2(\card(P)-1)$ and hence $\Lambda_4 \not \in S$ by Lemma~\ref{TuConverse}.

Let $\Lambda_4'$ be the point of $P$ which is closest to $\Lambda_4$. Then $d(\Lambda_4, \Lambda_4') > \ell$. By construction, the paths $\Lambda_i$ to $\Lambda_i'$ for $i\leq 3$ are pairwise disjoint except possibly at $\Lambda_i'$. Thus, there is at most one $k$ such that the paths $\Lambda_k$ to $\Lambda_k'$ and $\Lambda_4$ to $\Lambda_4'$ intersect in more than one point.

\textbf{Case 1:} There is no such $k$ or $k=3$. See Figure 3.
 
Consider $d_3(\{\Lambda_1, \Lambda_2, \Lambda_4\})$. For distinct $i,j \in \{1,2,4\}$, the path between $\Lambda_i$ and $\Lambda_j$ passes through $\Lambda_i'$ and $\Lambda_j'$. We have $d(\Lambda_i, \Lambda_j) = d(\Lambda_i, \Lambda_i') + d(\Lambda_i', \Lambda_j') + d(\Lambda_j, \Lambda_j').$ By construction, $d(\Lambda_i, \Lambda_i') = \ell$ if $i \in \{1,2,3\}$, and since $\Lambda_1'$ and $\Lambda_2'$ are the endpoints of $P$, we have $d(\Lambda_1', \Lambda_2') + d(\Lambda_2', \Lambda_4') + d(\Lambda_4', \Lambda_1') = 2(\card(P) - 1)$. Hence  $d_3(\{\Lambda_1, \Lambda_2, \Lambda_4\}) = 4\ell + 2(\card(P)-1) + 2d(\Lambda_4, \Lambda_4') > d_3(S)$ because $d(\Lambda_4, \Lambda_4') > \ell$. Hence $\Lambda_4 \not \in S$.

\textbf{Case 2:} Either $k=1$ or $k=2$. See Figure 3.

Suppose $k=1$.Then
consider $d_3(\{ \Lambda_2, \Lambda_3, \Lambda_4\})$. Arguing as in
the previous case, and noting that $\Lambda_4'$ and $\Lambda_2'$ are
endpoints of $P$, we similarly get  $d_3(\{\Lambda_2, \Lambda_3,
\Lambda_4\}) = 4\ell + 2(\card(P)-1) + 2d(\Lambda_4, \Lambda_4') >
d_3(S)$ because $d(\Lambda_4, \Lambda_4') > \ell$. Hence $\Lambda_4 \not \in S$.  The case $k=2$ is the same argument with $\Lambda_1$ and $\Lambda_2$ switched. \end{proof}

\begin{figure}[h]
\centering
\includegraphics[scale=0.2]{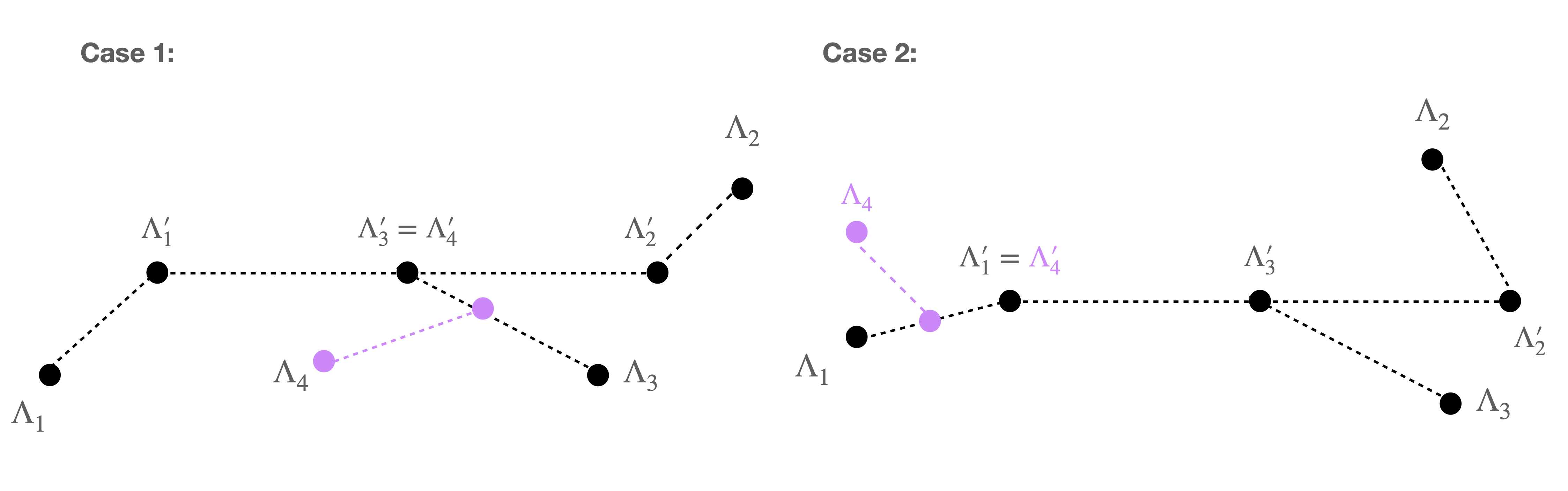}
\caption{Case 1 and Case 2 in the proof of Corollary~\ref{cor:Lambdatilde}.}
\end{figure}

\begin{rmk}
	We note that part of Corollary~\ref{cor:Lambdatilde} can be thought of as a special case of \cite[Proposition 5.3]{A13}
\end{rmk}

\section{Computing the Distance From the Root}\label{sect:distance}

Let $\OO_q$ be a $q$-maximal $q$-enlargement of a suborder of $\End(E)$. In this section, we show how to compute the distance between $\End(E) \otimes \ZZ_q$ and $\OO_q \otimes \ZZ_q$, viewed as vertices on the Bruhat-Tits tree. 
 
First we give a convenient expression for $\bigcap_{\Lambda' \in N_{r}(\Lambda)} \Lambda'$:

\begin{prop}\label{defNr} Fix an integer $r \geq 0$ and a maximal
  order $\Lambda$ in $M_2(\QQ_q)$. Let $\tilde{\Lambda}
  =\cap_{\Lambda' \in N_r(\Lambda)} \Lambda'$. Then $\tilde{\Lambda} =
  \ZZ_q + q^r \Lambda$, 
and the set of maximal orders containing $\tilde{\Lambda}$ is equal to $N_r(\Lambda)$.
\end{prop}

\begin{proof} First, we show that $ \ZZ_q + q^r \Lambda \subset \tilde{\Lambda}$. Equality will follow by showing the reduced discriminants are equal. 

Let $\Lambda' \in N_r(\Lambda)$ and note that this implies $v_q(\discrd(\Lambda \cap \Lambda')) \leq r$. Applying \cite[Lemma 15.2.15]{Voight2021}, we must have $v_q([\Lambda: \Lambda \cap \Lambda']) \leq r$ as well. Therefore, $q^r \Lambda  \subset \Lambda' \cap \Lambda \subset \Lambda'$. Since $\ZZ_q \subset \Lambda'$ for every order $\Lambda'$ in $M_2(\QQ_q)$, this shows that $\ZZ_q + q^r \Lambda \subset \Lambda'$ for all $\Lambda' \in N_r(\Lambda)$. Therefore, $\ZZ_q + q^r \Lambda \subset \tilde{\Lambda}$. 

Now, we show that the reduced discriminant are equal. If $\{1, b_1,b_2, b_3\}$ is a basis for $\Lambda$, then $\{1, q^r b_1, q^r b_2, q^r b_3\}$ is a basis for $\ZZ_q + q^r \Lambda$. By maximality of $\Lambda$, $v_q(\discrd(\Lambda)) = 1$, so by direct computation, $\discrd(\ZZ_q + q^r \Lambda) = q^{3r}$. By Corollary~\ref{cor:Lambdatilde}, $\discrd(\tilde{\Lambda}) = q^{3r}$. This shows that $[\tilde{\Lambda}: \ZZ_q + q^r \Lambda] = 1$ and therefore $\ZZ_q + q^r \Lambda = \tilde{\Lambda}$.

Finally, we show that the set of maximal orders containing $\tilde{\Lambda}$ is equal to $N_r(\Lambda)$. Let $\Lambda_1$ be any order not in $N_r(\Lambda)$. Choose two orders $\Lambda_2$ and $\Lambda_3$ such that $d(\Lambda_2, \Lambda) = d(\Lambda_3, \Lambda) = r$ and the paths from $\Lambda$ to $\Lambda_i$ are disjoint. Since $\Lambda$ is incident to $q+1$ edges, this can be arranged. We compute that $d_3(\{\Lambda_1, \Lambda_2, \Lambda_3\}) = 4r + 2d(\Lambda_1, \Lambda) > d_3(N_r(\Lambda)) = 6r$.  
By Lemma~\ref{TuConverse}, $\Lambda_1 \not \supset \tilde{\Lambda}$. 
\end{proof}

\begin{figure}
\centering
\includegraphics[scale=0.12]{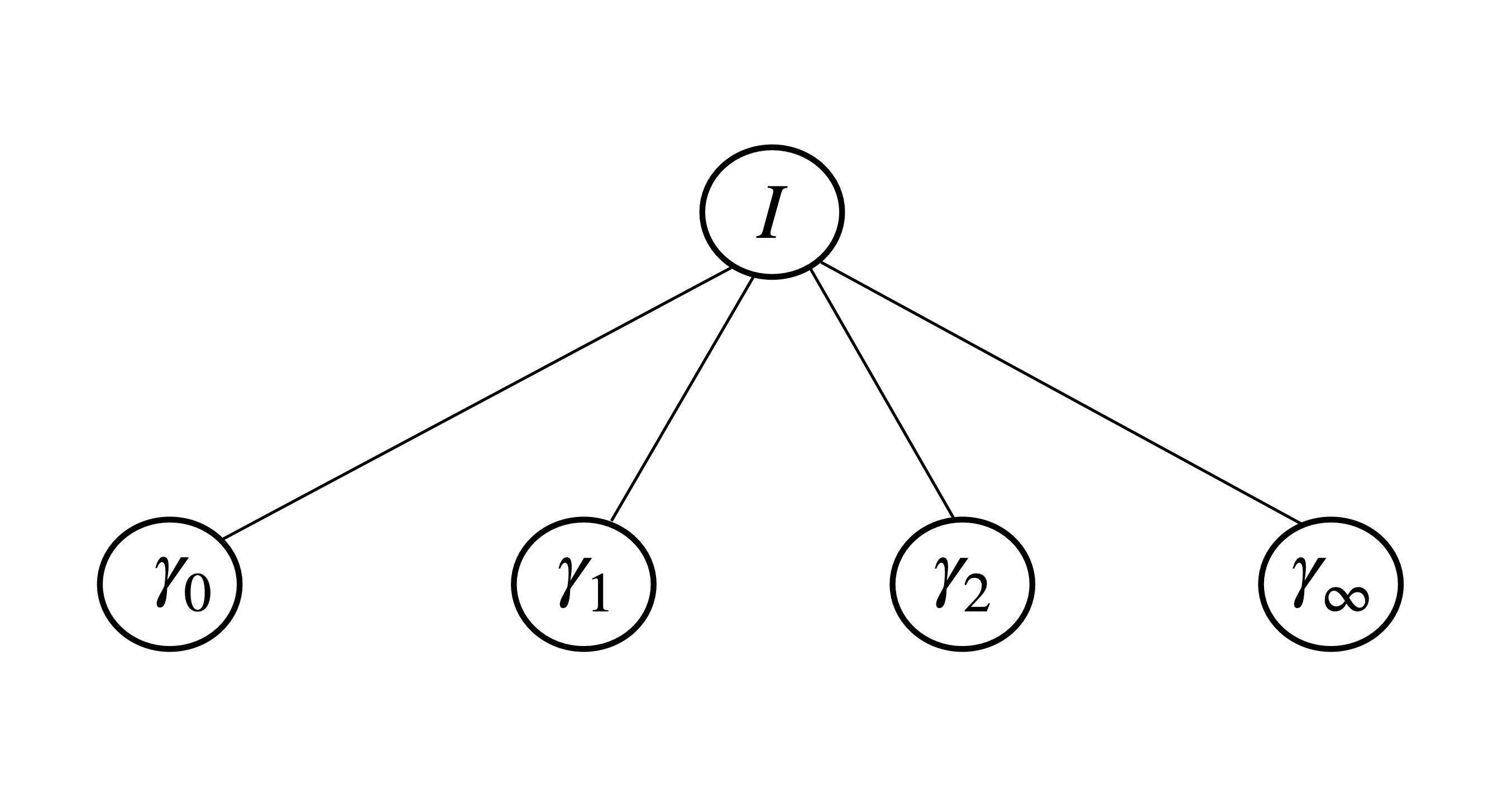}
\caption{The maximal orders containing $\bigcap_{\Lambda' \subset N_{1}(M_2(\ZZ_q))} \Lambda'$ when $q=3$.}

\end{figure}

We will first use Proposition~\ref{defNr} to compute the distance between $\OO_q$ and $\End(E) \otimes \ZZ_q$ under an appropriate embedding into $M_2(\QQ_q)$. Namely,  if $\OO_q \otimes \ZZ_q$ is mapped to the root of the Bruhat-Tits tree, the distance will be the least $r$ for which $\ZZ_q + q^r M_2(\ZZ_q) \subset \End(E) \otimes \ZZ_q$. We compute $r$ by finding the least $r$ for which $q^r \OO_q \subset \End(E)$. We show that such an $r$ exists and is bounded in terms of the reduced discriminant of the input order.

\begin{prop}\label{prop:existsminr} Let $\mathcal{O}_0$ be an order, and let $\mathcal{O}$ be a $q$-enlargement of $\mathcal{O}_0$ for a prime $q$. If $k \geq v_q([\mathcal{O}: \mathcal{O}_0])$, then $q^k \mathcal{O} \subset \mathcal{O}_0$. 
\end{prop}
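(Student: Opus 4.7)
The plan is to use the $q$-enlargement hypothesis to deduce that the quotient $\mathcal{O}/\mathcal{O}_0$ is a finite abelian $q$-group, and then apply the elementary fact that the exponent of a finite abelian group divides its order.

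First, I would observe that both $\mathcal{O}$ and $\mathcal{O}_0$ are full $\ZZ$-lattices of the same rank in $B_{p,\infty}$, so the index $[\mathcal{O}:\mathcal{O}_0]$ is finite and equal to the cardinality of $\mathcal{O}/\mathcal{O}_0$. Because $\mathcal{O}$ is a $q$-enlargement of $\mathcal{O}_0$, we have $\mathcal{O} \otimes \ZZ_{q'} = \mathcal{O}_0 \otimes \ZZ_{q'}$ for every prime $q' \neq q$. Using that $\mathcal{O}/\mathcal{O}_0$ is finitely generated and that tensoring with $\ZZ_{q'}$ localizes at $q'$, this implies $(\mathcal{O}/\mathcal{O}_0) \otimes \ZZ_{q'} = 0$ for all $q' \neq q$. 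Hence the only prime dividing the order $[\mathcal{O}:\mathcal{O}_0]$ is $q$ itself, so $[\mathcal{O}:\mathcal{O}_0] = q^{v_q([\mathcal{O}:\mathcal{O}_0])}$.

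Next I would apply the standard fact that the exponent of a finite abelian group divides its order. Applied to $\mathcal{O}/\mathcal{O}_0$, this shows that $q^{v_q([\mathcal{O}:\mathcal{O}_0])}$ annihilates every element of $\mathcal{O}/\mathcal{O}_0$. In other words, $q^{v_q([\mathcal{O}:\mathcal{O}_0])} \mathcal{O} \subseteq \mathcal{O}_0$. For any $k \geq v_q([\mathcal{O}:\mathcal{O}_0])$, we then have $q^k \mathcal{O} \subseteq q^{v_q([\mathcal{O}:\mathcal{O}_0])} \mathcal{O} \subseteq \mathcal{O}_0$, which is the desired conclusion.

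There is no real obstacle in this argument; the only subtlety to state cleanly is the passage from the local equality $\mathcal{O} \otimes \ZZ_{q'} = \mathcal{O}_0 \otimes \ZZ_{q'}$ for $q' \neq q$ to the assertion that $[\mathcal{O}:\mathcal{O}_0]$ is a power of $q$. This is a routine consequence of the structure theorem for finitely generated abelian groups (or equivalently, of the local-global principle applied to the finite module $\mathcal{O}/\mathcal{O}_0$), so I would state it in one line and move on.
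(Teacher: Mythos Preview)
Your proposal is correct and follows essentially the same approach as the paper: both first establish that $[\mathcal{O}:\mathcal{O}_0]$ is a power of $q$ via the local-global principle for indices (the paper cites Voight's Lemmas 9.5.3 and 9.6.7, you phrase it via $(\mathcal{O}/\mathcal{O}_0)\otimes\ZZ_{q'}=0$), and then use that the index annihilates the quotient to conclude $q^{v_q([\mathcal{O}:\mathcal{O}_0])}\mathcal{O}\subseteq\mathcal{O}_0$.
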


\begin{proof} For any prime $q'\neq q$, the power of $q'$ exactly dividing the global index $[\OO: \OO_0]$ is a generator for the local index at $q'$, $[\mathcal{O} \otimes \ZZ_{(q')} : \mathcal{O}_0 \otimes \ZZ_{(q')}]$, by \cite[Lemma 9.6.7]{Voight2021}. As $\mathcal{O}$ is a $q$-enlargement of $\mathcal{O}_0$, we have $\mathcal{O} \otimes \ZZ_{q'} = \mathcal{O}_0 \otimes \ZZ_{q'}$, and thus $\mathcal{O}_0 \otimes \ZZ_{(q')} = \mathcal{O} \otimes \ZZ_{(q')}$ \cite[Lemma 9.5.3]{Voight2021}. Hence $[\mathcal{O} \otimes \ZZ_{(q')}: \mathcal{O}_0 \otimes \ZZ_{(q')}]$ is generated by a unit of $\ZZ_{(q')}$ whenever $q' \neq q$. Thus, the global index $[\mathcal{O} : \mathcal{O}_0]$ is  a power of $q$. If $e = v_q([\OO:\OO_0])$, then $q^e \mathcal{O} \subset \mathcal{O}_0$. If $k \geq e$, then $q^k \mathcal{O} \subset q^e \mathcal{O} \subset \mathcal{O}_0$. 
\end{proof}

\begin{cor}\label{cor:discminr} Let $\mathcal{O}_0 \subset \End(E)$ be an order, and let $\OO_q$ be a $q$-maximal $q$-enlargement of $\mathcal{O}_0$ for $q\neq p$. Let $e = v_q(\discrd(\mathcal{O}_0))$. Then $q^e \OO_q \subset \End(E)$.

\end{cor}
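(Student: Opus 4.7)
The plan is to derive the conclusion as a direct consequence of Proposition~\ref{prop:existsminr}. That proposition tells us $q^k \tilde{\OO} \subset \OO_0$ whenever $k \geq v_q([\tilde\OO : \OO_0])$, and since the hypothesis gives $\OO_0 \subset \End(E)$, it is enough to show the bound $v_q([\tilde\OO : \OO_0]) \leq e$.

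To establish this bound I would work locally at $q$ and use the standard discriminant-index relation for $\ZZ_q$-orders of full rank in $M_2(\QQ_q)$, namely that $\discrd(\Lambda) = [\Lambda':\Lambda]\,\discrd(\Lambda')$ whenever $\Lambda \subset \Lambda'$ (the same formula invoked in the proof of Lemma~\ref{discdist}, coming from \cite[Lemma 15.2.15]{Voight2021}). Applied to $\OO_0 \otimes \ZZ_q \subset \tilde{\OO} \otimes \ZZ_q$ and then taking $q$-adic valuations, this yields
\[
v_q\!\bigl([\tilde{\OO} \otimes \ZZ_q : \OO_0 \otimes \ZZ_q]\bigr) \;=\; v_q(\discrd(\OO_0)) - v_q(\discrd(\tilde{\OO})).
\]
Since $\tilde{\OO}$ is $q$-maximal, $\tilde{\OO} \otimes \ZZ_q$ is a maximal order in $M_2(\QQ_q)$ and hence has trivial reduced discriminant locally, so the right-hand side is exactly $e$.

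To translate this local index into the global index appearing in Proposition~\ref{prop:existsminr}, I would reuse the observation from the proof of Proposition~\ref{prop:existsminr}: because $\tilde{\OO}$ is a $q$-enlargement of $\OO_0$, the completions $\tilde{\OO} \otimes \ZZ_{q'}$ and $\OO_0 \otimes \ZZ_{q'}$ coincide for every $q' \neq q$, so the global index $[\tilde{\OO} : \OO_0]$ is a power of $q$ agreeing with its local counterpart at $q$. Thus $v_q([\tilde{\OO}:\OO_0]) = e$, and taking $k = e$ in Proposition~\ref{prop:existsminr} gives $q^e \tilde{\OO} \subset \OO_0 \subset \End(E)$, as required. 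There is no real obstacle here; the corollary is just the combination of the discriminant-index formula at $q$ with the already-proved Proposition~\ref{prop:existsminr}.
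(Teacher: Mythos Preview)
Your proposal is correct and essentially matches the paper's proof: both invoke the discriminant--index formula from \cite[Lemma 15.2.15]{Voight2021}, use $q$-maximality of $\tilde{\OO}$ (with $q\neq p$) to get $v_q(\discrd(\tilde{\OO}))=0$ and hence $v_q([\tilde{\OO}:\OO_0])=e$, and then apply Proposition~\ref{prop:existsminr}. The only cosmetic difference is that the paper applies the discriminant--index formula globally in one line, whereas you first pass to the local index at $q$ and then identify it with the global one; the content is the same.
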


\begin{proof} We have $\discrd(\mathcal{O}_0) = [\OO_q : \mathcal{O}_0] \discrd(\OO_q)$ by \cite[Lemma 15.2.15]{Voight2021}. As $\OO_q$ is maximal at $q$, and $q \neq p$, we have $v_q(\discrd(\OO_q)) = 0$. Thus $e = v_q(\discrd(\mathcal{O}_0)) = v_q([\OO_q : \mathcal{O}_0])$. It now follows from Proposition~\ref{prop:existsminr} that $q^e \OO_q \subset \mathcal{O}_0$, and since $\mathcal{O}_0 \subset \End(E)$ by hypothesis, we have $q^e \OO_q \subset \End(E)$.
\end{proof}

The next proposition shows that the distance from $M_2(\ZZ_q)$ can be computed without reference to an embedding into $M_2(\QQ_q)$.

\begin{prop}\label{minrdist} Let $\OO_0$ be a suborder of $\End(E)$ and let $\OO_q$ be a $q$-maximal $q$-enlargement of $\OO_0$.  Let $f$ be any isomorphism $f: \OO_q \otimes \QQ_q \to M_2(\QQ_q)$ such that $f(\OO_q \otimes \ZZ_q) = M_2(\ZZ_q).$  Let $\Lambda_E =  f(\End(E) \otimes \ZZ_q)$. Let $r$ be the least integer such that $q^r \OO_q \subset \End(E)$. Then $r = d(M_2(\ZZ_q), \Lambda_E)$.\end{prop}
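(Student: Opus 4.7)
The plan is to rephrase each side of the claimed equality as a containment condition involving the order $\ZZ_q + q^r M_2(\ZZ_q)$, which by Proposition~\ref{defNr} equals $\bigcap_{\Lambda \in N_r(M_2(\ZZ_q))} \Lambda$. Combining Proposition~\ref{defNr} with Corollary~\ref{cor:Lambdatilde} (applied to the ``path'' consisting of the single vertex $M_2(\ZZ_q)$), the set of maximal orders containing $\ZZ_q + q^r M_2(\ZZ_q)$ is exactly $N_r(M_2(\ZZ_q))$. In particular, since $\Lambda_E$ is a maximal order in $M_2(\QQ_q)$, we obtain the equivalence
\[
d(M_2(\ZZ_q), \Lambda_E) \leq r \iff \ZZ_q + q^r M_2(\ZZ_q) \subseteq \Lambda_E.
\]

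Next I would translate the other side of the equality, $q^r \tilde{\OO} \subseteq \End(E)$, into a purely local condition at $q$. By the local-global principle, this containment is equivalent to $q^r(\tilde{\OO}\otimes \ZZ_{q'}) \subseteq \End(E)\otimes \ZZ_{q'}$ for every prime $q'$. For $q'\neq q$, the element $q^r$ is a unit in $\ZZ_{q'}$, so $q^r(\tilde{\OO}\otimes \ZZ_{q'}) = \tilde{\OO}\otimes \ZZ_{q'}$; and because $\tilde{\OO}$ is a $q$-enlargement of $\OO_0 \subseteq \End(E)$, we have $\tilde{\OO}\otimes \ZZ_{q'} = \OO_0 \otimes \ZZ_{q'} \subseteq \End(E)\otimes \ZZ_{q'}$. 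So the only nontrivial condition is at $q$, giving
\[
q^r \tilde{\OO} \subseteq \End(E) \iff q^r(\tilde{\OO}\otimes \ZZ_q) \subseteq \End(E)\otimes \ZZ_q.
\]

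Now I would apply the isomorphism $f$. Because $f$ is a $\QQ_q$-algebra isomorphism carrying $\tilde{\OO}\otimes \ZZ_q$ onto $M_2(\ZZ_q)$, it sends $q^r(\tilde{\OO}\otimes \ZZ_q)$ onto $q^r M_2(\ZZ_q)$ and $\End(E)\otimes \ZZ_q$ onto $\Lambda_E$. Moreover $\ZZ_q \subseteq \Lambda_E$ since $\Lambda_E$ is a $\ZZ_q$-order, so adding $\ZZ_q$ to the left-hand side of the containment is free. Thus
\[
q^r(\tilde{\OO}\otimes \ZZ_q) \subseteq \End(E)\otimes \ZZ_q \iff q^r M_2(\ZZ_q) \subseteq \Lambda_E \iff \ZZ_q + q^r M_2(\ZZ_q) \subseteq \Lambda_E.
\]
Chaining all three equivalences yields $q^r \tilde{\OO} \subseteq \End(E) \iff d(M_2(\ZZ_q),\Lambda_E) \leq r$, and taking the least such $r$ gives the proposition.

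The only potentially delicate point is the local-global reduction in the middle step; in particular I need the $q$-enlargement hypothesis to handle the primes $q'\neq q$, and the existence of \emph{some} integer $r$ with $q^r\tilde{\OO}\subseteq \End(E)$ (so that ``least such $r$'' is well-defined) is already provided by Corollary~\ref{cor:discminr}. With those two facts in hand, the rest is just a clean chain of equivalences glued together by Proposition~\ref{defNr} and Corollary~\ref{cor:Lambdatilde}.
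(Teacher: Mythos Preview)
Your proof is correct and follows essentially the same approach as the paper: both arguments combine Proposition~\ref{defNr} and Corollary~\ref{cor:Lambdatilde} to characterize $d(M_2(\ZZ_q),\Lambda_E)\leq r$ via the containment $\ZZ_q + q^r M_2(\ZZ_q)\subseteq \Lambda_E$, then use the $q$-enlargement hypothesis and the local-global principle to reduce $q^r\tilde{\OO}\subseteq \End(E)$ to the local condition at $q$, and finally apply $f$. Your explicit remark that Corollary~\ref{cor:discminr} guarantees the existence of some admissible $r$ is a small but welcome addition.
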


\begin{proof}

Let $f$ be any isomorphism satisfying the hypotheses of the proposition, and let $\Lambda_E =  f(\End(E) \otimes \ZZ_q)$. For a nonnegative integer $k$, let $\tilde{\Lambda}_k = \cap_{\Lambda \in N_{k}(M_2(\ZZ_q))} \Lambda$. By Corollary~\ref{cor:Lambdatilde}, $d(M_2(\ZZ_q), \Lambda_E) \leq k$ if and only if $\tilde{\Lambda}_k \subset \Lambda_{E}$. By Proposition~\ref{defNr}, we have $\tilde{\Lambda}_k = \ZZ_q + q^k M_2(\ZZ_q)$. This shows that $d(M_2(\ZZ_q), \Lambda_E)$ is the least $r$ for which $\ZZ_q +q^rM_2(\ZZ_q) \subset \Lambda_E$. We will now show that $\ZZ_q +q^kM_2(\ZZ_q) \subset \Lambda_E$ if and only if $q^k \OO_q \subset \End(E)$. 

As $\ZZ_q$ is contained in every $\ZZ_q$-order, we have $\ZZ_q + q^kM_2(\ZZ_q) \subset \Lambda_E$ if and only if $q^k M_2(\ZZ_q) \subset \Lambda_E$. By hypothesis, $q^k M_2(\ZZ_q) = f(q^k \OO_q \otimes \ZZ_q)$. It follows that $q^k M_2(\ZZ_q) \subset \Lambda_E$ if and only if $q^k \OO_q \subset \End(E) \otimes \ZZ_q$. Since $\OO_q$ is a $q$-enlargement of $\mathcal{O}_0$, at primes $q' \neq q$ we have $q^k \OO_q \otimes \ZZ_{q'} = \OO_q \otimes \ZZ_{q'} = \mathcal{O}_0 \otimes \ZZ_{q'} \subset \End(E) \otimes \ZZ_{q'}.$ It follows from the local-global principle that $q^k \OO_q \subset \End(E)$ if and only if $q^k \OO_q \otimes \ZZ_q \subset \End(E) \otimes \ZZ_q$. This shows that $d(M_2(\ZZ_q), \Lambda_E)$ is the least $r$ for which $q^r \OO_q \subset \End(E)$.\end{proof}

We now give an algorithm to compute the distance $d(M_2(\ZZ_q), \Lambda_E)$, where $\Lambda_E$ is the image of $\End(E) \otimes \ZZ_q$ under any isomorphism satisfying the hypotheses of Proposition~\ref{minrdist}. We will construct both $\OO_q$ and $f$ in Section~\ref{sect:localglobal}.

\begin{alg}\textbf{Computing the distance $d(M_2(\ZZ_q), \Lambda_E)$}\label{alg:distance}

\textbf{Input:} $E/\FF_{p^2}$ supersingular; an order $\mathcal{O}_0 = \langle 1, \alpha_1, \alpha_2, \alpha_1\alpha_2 \rangle$, where $\alpha_1$ and $\alpha_2$ are given in efficient HD representation; a multiplication table for $\mathcal{O}_0$; a basis $B$ for a $q$-maximal $q$-enlargement $\OO_q$ of $\mathcal{O}_0 \subset \End(E)$ with elements of $B$ expressed as $\ZZ[1/q]$-linear combinations of $\{1, \alpha_1, \alpha_2, \alpha_1 \alpha_2\}$; $e := v_q(\discrd(\mathcal{O}_0))$

\textbf{Output:} $r=d(M_2(\ZZ_q), \Lambda_E)$
	\begin{enumerate} 
		\item Set $i:=e-1$. 
		\item While $0 \leq i \leq e-1$: 
			\begin{enumerate}
				\item For each $b \in B$, use Proposition~\ref{prop:eta} with $\beta = q^{i+1} b$ and $n=q$ to determine if $q^i b \in \End(E)$.
				\item If for any $b \in B$, $q^i b \not \in \End(E)$, output $i+1$. Otherwise, set $i:=i-1$.
			\end{enumerate}
		\item Output $0$.
	\end{enumerate}\end{alg}

\begin{prop}\label{prop:algdistance} Algorithm~\ref{alg:distance} is correct and uses at most $4e$ applications of Proposition~\ref{prop:eta}, with input $n=q$ and $\beta \in \End(E)$. 
The run time is polynomial in $\log(p)$ and
$e\log(q)\max\{\log(\Nrd(b)): b \in B\}.$
\end{prop}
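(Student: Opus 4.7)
The plan is to verify both the correctness and the complexity of Algorithm~\ref{alg:distance} using the two preceding structural results. Proposition~\ref{minrdist} identifies $r = d(M_2(\ZZ_q), \Lambda_E)$ with the least nonnegative integer such that $q^r \tilde{\mathcal{O}} \subset \End(E)$, and Corollary~\ref{cor:discminr} guarantees that this minimum satisfies $r \leq e$. Together they justify restricting the search to $i \in \{0, \ldots, e-1\}$ and declaring $r = 0$ exactly when the loop terminates without ever outputting a positive value.

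For correctness I would first reduce the containment $q^i \tilde{\mathcal{O}} \subset \End(E)$ to the finite check $q^i b \in \End(E)$ for each $b \in B$, using that $B$ is a $\ZZ$-basis of $\tilde{\mathcal{O}}$. I would then prove by downward induction on $i$ (from $e-1$) the invariant that whenever the loop reaches iteration $i$, we have $q^{i+1} b \in \End(E)$ for every $b \in B$: the base case is Corollary~\ref{cor:discminr}, and the inductive step follows from the fact that a non-interrupting previous iteration at index $i+1$ has already verified this containment. This invariant is precisely what is needed to invoke Proposition~\ref{prop:eta} at step $i$ with $n = q$ and $\beta = q^{i+1} b$, since Proposition~\ref{prop:eta} presumes $\beta \in \End(E)$. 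When some $b \in B$ first fails at index $i$, the containment $q^i \tilde{\mathcal{O}} \not\subset \End(E)$ combined with the inductive hypothesis $q^{i+1} \tilde{\mathcal{O}} \subset \End(E)$ pinpoints $r = i+1$, matching the algorithm's output; if the loop exhausts $i = 0$ with no failure, then $\tilde{\mathcal{O}} \subset \End(E)$ and $r = 0$ is correctly returned.

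For the complexity, $\tilde{\mathcal{O}}$ is a $\ZZ$-order in a four-dimensional algebra, so $\card(B) = 4$. The loop executes at most $e$ iterations with at most $4$ calls to Proposition~\ref{prop:eta} each, which gives the bound $4(e+1)$ (with the extra slack absorbing the final exit check). For each call $\beta = q^{i+1} b$ has degree $q^{2(i+1)}\Nrd(b)$, so $\log \deg(\beta) = O(e \log q + \log \Nrd(b))$; substituting this into the polynomial-time bound of Proposition~\ref{prop:eta} and multiplying by the polynomial number of calls yields the stated runtime polynomial in $\log(p^2)$ and $e\log(q)\max\{\log \Nrd(b) : b \in B\}$.

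The main obstacle I anticipate is bookkeeping around the inductive invariant: the elements of $B$ are given inside $\mathcal{O}_0 \otimes \ZZ_{(q)}$ and $\tilde{\mathcal{O}}$ itself may not lie in $\End(E)$, yet each test value $\beta = q^{i+1} b$ fed to Proposition~\ref{prop:eta} must be a genuine endomorphism. Anchoring the induction with Corollary~\ref{cor:discminr} at $i = e-1$ and propagating it downward through the loop is the essential bridge between the algorithm's termination criterion and the structural distance identification provided by Proposition~\ref{minrdist}.
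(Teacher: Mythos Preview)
Your proposal is correct and follows essentially the same approach as the paper's proof: both invoke Corollary~\ref{cor:discminr} to anchor $q^e\tilde{\mathcal{O}}\subset\End(E)$, Proposition~\ref{minrdist} to identify the output with the distance, and the downward-inductive observation that $q^{i+1}b$ was verified to lie in $\End(E)$ at the previous iteration so that Proposition~\ref{prop:eta} applies with $n=q$ and $\deg(\beta)=q^{2(i+1)}\Nrd(b)$. Your write-up is in fact more explicit than the paper's about the invariant and the bookkeeping, but the underlying argument is the same.
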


\begin{proof} Let $\OO_q$ be the order generated by $B$. By Corollary~\ref{cor:discminr}, we have $q^e \OO_q \subset \End(E)$, so the least $r$ for which $q^r \OO_q \subset \End(E)$ is at most $e$. This shows that the output is the least $r$ for which $q^r \OO_q\subset \End(E)$. By Proposition~\ref{minrdist}, this is the distance $d(M_2(\ZZ_q), \Lambda_E)$. In each iteration of the while loop, we apply Proposition~\ref{prop:eta} at most 4 times, and there are at most $e$ iterations of the while loop. At the $i$-th stage, we can express each candidate endomorphism as $\frac{q^{i+1}b}{q}$, where $q^{i+1}b \in \End(E)$ was verified in the $(i-1)$-th stage. We have $\deg(q^{i+1}b) = q^{2(i+1)}\Nrd(b)$, so the run-time follows from Proposition~\ref{prop:eta}.\end{proof}

\section{Using Global Containment to Test Local Containment}\label{sect:localglobal}

In this section, we show how to translate between computations in
$\End(E) \otimes \QQ$ and computations in the Bruhat-Tits tree.  In
the former, we have Proposition~\ref{prop:eta}, and we would like to
use this algorithm to deduce information about
$\End(E) \otimes \ZZ_q$.  Using work of Voight, we construct a
$q$-maximal $q$-enlargement $\OO_q$ of our input order and an
isomorphism $f: \OO_q \otimes \ZZ_q \to M_2(\ZZ_q)$. This maps the
global order $\OO_q$ to the root of our Bruhat-Tits tree, so that
$\OO_q \subset \End(E)$ if and only if
$M_2(\ZZ_q) = f(\End(E) \otimes \ZZ_q)$.  The main result of this
section is Corollary~\ref{cor:global}, which shows that for any finite
intersection of orders $\Lambda$ in $M_2(\QQ_q)$, we can construct a
global order $\OO$ such that $\OO \subset \End(E)$ if and only if
$\Lambda \subset f(\End(E) \otimes \ZZ_q)$. This will allow us to test
many candidates for $\End(E) \otimes \ZZ_q$ at once.

First, we state the following propositions, which are due
to~\cite{Voight13}. More details of the constructions are given
in~Appendix~B.

\begin{prop}\label{Oq} Suppose an order $\mathcal{O}_0 \subset B_{p,\infty}$ is given by a basis and a multiplication table, and let $q$ be a prime. Then there is an algorithm which computes a $q$-maximal $q$-enlargement $\OO_q$ of $\mathcal{O}_0$.
The run time is polynomial in the size of the basis and multiplication table. The basis elements which are output are of the form $\frac{\beta}{q^k}$, for $\beta \in \mathcal{O}_0$ and $k \leq e = v_q(\discrd(\mathcal{O}_0))$. Furthermore, $\deg(\beta)$ is polynomial in the degrees of basis elements of $\mathcal{O}_0$ and $q$.\end{prop}

\begin{proof} This can be done using \cite[Algorithms 3.12, 7.9, 7.10]{Voight13}.\end{proof}

\begin{prop}\label{iso} Let $q\neq p$. Given a $q$-maximal order $\OO_q \subset B_{p,\infty}$ and a nonnegative integer $r$, there is an algorithm which computes an isomorphism $f: \OO_q \otimes \ZZ_q \to M_2(\ZZ_q)$ modulo $q^{r+1}$. This isomorphism is specified by the inverse image of standard basis elements $i'$ and $j'$ determined mod $q^{r+1}$ in $\OO_q$, such that \[j' \mapsto \begin{pmatrix} 0 & 1\\1 & 0 \end{pmatrix}\] and \[i' \mapsto \begin{pmatrix} 1 & 0\\ 0 & -1 \end{pmatrix} \text{ if } q \neq 2, \begin{pmatrix}0 & 1 \\ 1 & 1 \end{pmatrix} \text{ otherwise}.\] 
The run time is polynomial in $\log(q^r)$ and multiplication table for $\OO_q$. 
In terms of the basis for $\OO_q$, the representatives $i'$ and $j'$ are expressed with coefficients which are determined mod $q^{r+1}$.\end{prop}

\begin{proof} The main steps are to compute a zero divisor mod $q^{r+1}$ and then to apply \cite[Algorithms 4.2, 4.3]{Voight13}.\end{proof}

The other maximal orders that we will work with in the Bruhat-Tits tree are of the form $T^{-1} M_2(\ZZ_q) T$, where $T$ is a matrix associated to a matrix path, as described in Definition~\ref{def:matrixpath}. The next lemma shows that $T$ can be replaced with $T'$ such that $T \equiv T' \pmod{q^{r+1}}$, where $r > v_q(\det(T))$.

\begin{lem}\label{precision} Let $T$ be as in (3.1). $M \in M_2(\ZZ_q)$ and $e > a+b$. Then there is $C \in \GL_2(\ZZ_q)$ such that $T = C T'$. In particular, $T^{-1} M_2(\ZZ_q) T = T'^{-1} M_2(\ZZ_q) T'$.\end{lem}

\begin{proof}
We write $T' = \begin{psmallmatrix} q^a + dq^e & c + fq^{e}\\ gq^e & q^b + hq^e \end{psmallmatrix}$ with $d,f,g,h \in \ZZ_q$. Multiplying on the left by
$\begin{psmallmatrix} (1 + dq^{e-a})^{-1} &0\\ -gq^{e-a}(1 +
  dq^{e-a})^{-1} & 1 \end{psmallmatrix} \in \GL_2(\ZZ_q)$ gives
$\begin{psmallmatrix} q^a & \alpha \\ 0 & q^b\beta \end{psmallmatrix},$ where
$\alpha = (c + fq^e)(1 + dq^{e-a})^{-1}$ and
$\beta = 1 + hq^{e-b} - gq^{e-a-b}\alpha$.

Since $e > a+b$, we have $v_q(\beta) = 0$ and $v_q(\alpha) \geq 0$. We
also have $c = \alpha + q^b(dq^{e-a-b}\alpha - fq^{e-b})$. Multiply on
the left by
$\begin{psmallmatrix} 1\;\; & \beta^{-1}(dq^{e-a-b}\alpha -
  fq^{e-b})\\ 0\;\; & \beta^{-1} \end{psmallmatrix} \in \GL_2(\ZZ_q)$
to arrive at $T$. All operations are invertible over $\ZZ_q$ provided
$e > a+ b$. \end{proof}

To check whether a finite intersection of local maximal orders is contained in $\End(E) \otimes \ZZ_q$, we will check whether the intersection of related maximal orders is contained in $\End(E)$. 
The following lemma allows us to compute the basis of an intersection of $\ZZ$-orders in polynomial time.

\begin{lem}
  Let $L_1, L_2 \subseteq \ZZ^4$ be two $\ZZ$-lattices of full rank,
  specified by a possibly dependent set of generators. A basis for
  each lattice, a lattice basis for the sum $L_1+L_2$ and for the
  intersection $L_1 \cap L_2$ can be computed in polynomial time.
\end{lem}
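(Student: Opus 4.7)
The plan is to reduce everything to the computation of the Hermite Normal Form (HNF) of an integer matrix, which is known to be computable in deterministic polynomial time in the bit-size of the input (Kannan--Bachem, with subsequent improvements by Storjohann), with output entries whose bit-size is polynomially bounded in that of the input. We identify each lattice $L_i$ with the column span in $\ZZ^4$ of an integer $4 \times m_i$ matrix $A_i$ whose columns are the given (possibly dependent) generators.

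For the first claim (a basis of each $L_i$): compute the HNF of $A_i$; the nonzero columns form a $\ZZ$-basis of $L_i$, and since $L_i$ has rank $4$, there are exactly four such columns. For the sum: the generators of $L_1 + L_2$ are exactly the columns of the concatenated matrix $[A_1 \mid A_2]$, so apply HNF to this matrix to obtain a basis of $L_1 + L_2$.

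For the intersection $L_1 \cap L_2$, we use the standard kernel-based construction. Working with bases $A_1$ and $A_2$ obtained in the previous steps, a vector $v$ lies in $L_1 \cap L_2$ iff there exist integer vectors $x, y$ with $A_1 x = A_2 y$, i.e.\ iff $(x, y)$ lies in the integer kernel of the $4 \times 8$ matrix $M := [A_1 \mid -A_2]$. The integer kernel of $M$ can itself be computed in polynomial time via HNF: apply HNF to $M^{\mathrm{T}}$ (or equivalently compute a unimodular matrix $U$ with $M U$ in HNF) and read off the columns of $U$ corresponding to zero columns of the HNF. Applying $A_1$ to the $x$-components of a basis of this kernel yields a generating set for $L_1 \cap L_2$ in $\ZZ^4$; one final HNF reduction produces a basis.

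The main technical point — and really the only one — is the polynomial bound on bit-size throughout. Naive Gaussian elimination over $\ZZ$ can cause exponential coefficient growth, so one must invoke a polynomial-time HNF algorithm (e.g.\ the modular algorithm of Domich--Kannan--Trotter or Storjohann's algorithms). Since all three constructions above use a constant number of HNF computations on matrices whose dimensions are bounded by $4 + m_1 + m_2$ and whose entries are among the input entries, the overall running time and the bit-size of the output basis vectors are polynomial in the input size, as required.
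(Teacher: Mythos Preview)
Your argument is correct. The basis and sum computations match the paper exactly (HNF of the generator matrix, respectively of the concatenated matrix). The only difference is in how you handle the intersection $L_1 \cap L_2$.

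You compute $L_1 \cap L_2$ by solving $A_1 x = A_2 y$ over $\ZZ$, i.e.\ by finding the integer kernel of $[A_1 \mid -A_2]$ and projecting. The paper instead uses duality: with $B_i$ a basis matrix for $L_i$, the matrix $(B_i^{\mathrm{T}})^{-1}$ is a basis for the dual lattice $\hat{L_i}$, and one has $\widehat{L_1 \cap L_2} = \hat{L_1} + \hat{L_2}$; so the paper computes a basis for $\hat{L_1} + \hat{L_2}$ (again by HNF) and then inverts the transpose once more to obtain a basis for $L_1 \cap L_2$. Both routes are standard and polynomial-time. The duality argument exploits the full-rank hypothesis (needed so that $(B_i^{\mathrm{T}})^{-1}$ exists and Cramer's rule keeps entries polynomially bounded), whereas your kernel approach would work even without full rank and avoids rational arithmetic entirely. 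Conversely, the paper's approach has the mild advantage that the lattice duality identity $\widehat{L_1 \cap L_2} = \hat{L_1} + \hat{L_2}$ makes correctness immediate, while your method requires the (easy but not stated) observation that projecting a kernel basis onto the $x$-coordinates and applying $A_1$ genuinely generates all of $L_1 \cap L_2$, not just a sublattice.
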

\begin{proof}
  A basis for each lattice from a set of $m$ generators can be
  computed by writing the generators as the columns of a matrix and
  the compute the Hermite Normal Form (HNF) of the matrix (see
  \cite[p.\ 149]{GM2002} for the definition). The HNF of this
  $4 \times m$ matrix can be computed in time polynomial in $m$, and
  the bit length of the matrix entries, see~\cite{HM1991,
    MW2001}. Computing a basis for the sum of two lattices immediately
  reduces to the problem of computing a basis of a lattice from a set
  of generators. To compute the intersection of two lattices $L_1,L_2$
  each specified by a $4\times 4$ basis in matrix form $B_1, B_2$, we
  first compute $(B_1^T)^{-1}$ and $(B_2^T)^{-1}$. Here $B_i^T$
  denotes the transpose of $B_i$. The matrix $(B_i^T)^{-1}$ is a basis
  for the dual $\hat{L_i}$ of $L_i$~\cite[p.\ 19]{GM2002}. By Cramer's
  rule, the inverse of this $4 \times 4$ matrix can be computed
  efficiently. Since the dual of
  a lattice $L$ consists of all vectors $y$ in $L \otimes \RR$ whose
  real inner product $\langle x,y\rangle$ is an integer for every $x
  \in L$, it follows easily that the dual of $L_1 \cap L_2$ is
  $\hat{L}_1+ \hat{L}_2$, i.e.\ the smallest lattice containing both
  $\hat{L_1}$ and $\hat{L_2}$. So a basis for the intersection is
  obtained by computing a basis for the lattice $\tilde L:=\hat{L}_1$
  and $\hat{L}_2$ and then computing the dual of $\tilde{L}$. By the
  above argument, this can be computed in polynomial time.
\end{proof}
\begin{cor}\label{cor:intersection}
  Let $\OO(i), i=1, 2, 3$ be orders in $B_{p,\infty}$ such
  that $(\disc(\OO_0))\OO_0 \subseteq \mathcal{O}(i)$. A basis for
  $\cap \OO(i)$ in which each basis vector is given as a $\QQ$-linear
  combination of the basis vectors for $\OO_0$ can be computed in
  polynomial time.
  \end{cor}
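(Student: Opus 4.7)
The plan is to reduce to the preceding lemma on intersecting full-rank $\ZZ$-lattices by first exhibiting all three orders $\OO(i)$ as sublattices of a single ambient lattice built from $\OO_0$. Set $N = \disc(\OO_0)$. The hypothesis $N\OO_0 \subseteq \OO(i)$ provides the inner bound. For the outer bound, I would argue as follows: for any $x \in \OO(i)$, multiplicative closure gives $x \cdot N\OO_0 \subseteq \OO(i)$, so $\Trd(Nx \cdot \OO_0) = \Trd(x \cdot N\OO_0) \subseteq \Trd(\OO(i)) \subseteq \ZZ$. By definition of the trace dual, this means $Nx \in \OO_0^{\vee}$. Combining with the standard duality fact $\disc(\OO_0) \cdot \OO_0^{\vee} \subseteq \OO_0$ (since the Gram determinant of the trace form equals $\disc(\OO_0)$, so $\OO_0^{\vee}/\OO_0$ is annihilated by $\disc(\OO_0)$), I get $N^2 x \in \OO_0$, hence $\OO(i) \subseteq \tfrac{1}{N^2}\OO_0$.

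Using the given $\ZZ$-basis of $\OO_0$, I would identify $\tfrac{1}{N^2}\OO_0$ with $\tfrac{1}{N^2}\ZZ^4$; equivalently, after scaling by $N^2$, each $\OO(i)$ becomes a full-rank $\ZZ$-sublattice of $\ZZ^4$ whose generators can be read off from the given generators of $\OO(i)$ (expressed as $\QQ$-combinations of the basis of $\OO_0$, cleared of denominators). Then invoke the preceding lemma to compute a basis for $\OO(1) \cap \OO(2)$ and then a basis for $(\OO(1)\cap\OO(2)) \cap \OO(3)$ via Hermite Normal Form computations, each in polynomial time. Scaling back by $N^{-2}$ yields the desired basis of $\bigcap_i \OO(i)$ as $\QQ$-linear combinations of the basis of $\OO_0$.

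The main obstacle is simply verifying bit-complexity: one must check that $N = \disc(\OO_0)$ has bit size polynomial in the input (which is immediate since $\disc(\OO_0)$ is a $4\times 4$ determinant in the trace pairings of the basis of $\OO_0$), and that the entries of the generators of each $\OO(i)$, after clearing denominators by $N^2$, remain of polynomial size. Once these bounds are in place, the HNF-based intersection algorithm cited in the preceding lemma supplies the rest, so no further work is required beyond bookkeeping the rescaling at the end.
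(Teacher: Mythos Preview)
Your proposal is correct and follows essentially the same reduction as the paper: identify $\OO_0$ with $\ZZ^4$, scale so that each $\OO(i)$ becomes an integer sublattice, and invoke the preceding HNF lemma. In fact you are more careful than the paper on one point: the paper's proof simply asserts that scaling by $\disc(\OO_0)$ yields integer matrices, which really requires an outer bound $\OO(i)\subseteq \tfrac{1}{N}\OO_0$ rather than the stated inner bound $N\OO_0\subseteq \OO(i)$ (and indeed, in the application in Corollary~\ref{cor:global} it is the outer bound $\disc(\OO_0)\,\OO(i)\subset \OO_0$ that is actually verified); your trace-dual argument cleanly derives the needed outer bound $\OO(i)\subseteq \tfrac{1}{N^2}\OO_0$ from the hypothesis as stated, at the harmless cost of scaling by $N^2$ instead of $N$.
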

\begin{proof}
  We can reduce this to 
  matrix computations with $4 \times 4$ integer matrices and use the
  previous lemma. We identify
  our starting global order $\mathcal{O}_0$ with $\ZZ^4$, whose
  Hermite Normal Form is just
  the $4 \times 4$ identity matrix.
  Since $(\disc(\OO_0))\OO_0 \subseteq \mathcal{O}(i)$, we can scale
  the matrices representing the orders $\OO(i)$ by $\disc \OO_0$
  and work with integer matrices.  See \cite[page 73f.]{Coh93} for generalizing the
  computation of the HNF to matrices with bounded rational
  coefficients. By the previous lemma, 
 $\cap \OO(i)$ can be computed in polynomial time.
  \end{proof}

The following is the main result of this section, which shows that we can check local containment by checking global containment.
  
\begin{cor}\label{cor:global}Let $\mathcal{O}_0 \subset \End(E)$ 
be given with basis $\{1, \alpha_1 , \alpha_2, \alpha_1\alpha_2\}$. 
Let $\Lambda$ be a finite intersection
  of maximal orders of $M_2(\QQ_q)$, and let
  $r \leq v_q(\discrd(\OO_0))$ be an integer such that
  $\Lambda \supset \cap_{\Lambda' \in N_r(M_2(\ZZ_q))} \Lambda'$. Let
  $\mathcal{O}_q$ be a q-maximal q-enlargement of
  $\mathcal{O}_0$ and let $f$ be the isomorphism computed in
  Proposition~\ref{iso}. Then there exists a global order
  $\mathcal{O}$ such that $f(\OO \otimes \ZZ_q) = \Lambda$ and such
  that for this $\OO$, $\mathcal{O} \subset \End(E)$ if and only if
  $\Lambda \subset f(\End(E) \otimes \ZZ_q)$. A basis for
  $\mathcal{O}$ with the property that its elements have degree
  polynomial in $\deg(\alpha_1)$, $\deg(\alpha_2),$ and $q^r$, can be
  computed in polynomial time in the size of a multiplication table
  for $\mathcal{O}_0$ and $\log(q)$.\end{cor}

\begin{proof} We will show that $\mathcal{O}$ can be computed such
  that $f(\mathcal{O} \otimes \ZZ_q) = \Lambda$ and
  $\mathcal{O} \otimes \ZZ_{q'} \subset \mathcal{O}_0 \otimes
  \ZZ_{q'}$ for all primes $q' \neq q$.

  Suppose that $\Lambda$ is maximal. Write
  $\Lambda = T^{-1} M_2(\ZZ_q) T$ where $T$ is the matrix associated
  to a matrix path of length at most $r$, written
  $T=\begin{psmallmatrix} q^a & c\\0 & q^b \end{psmallmatrix}$ where
  $a+b \leq r$.

  We construct an element $t \in \OO_q$ such that
  $f(t) \equiv T \pmod{q^{r+1}}$.  Let $i',j' \in \OO_q$
  denote the inverse image of the standard basis elements of
  $M_2(\ZZ_q)$ modulo $q^{r+1}$, as in Proposition~\ref{iso}.  If $q$
  is odd, then with
  $f(i') \equiv \begin{psmallmatrix} 1 & 0\\ 0 & -1 \end{psmallmatrix}
  \pmod{q^{r+1}}$ and
  $f(j') \equiv \begin{psmallmatrix} 0 & 1\\1 & 0 \end{psmallmatrix}
  \pmod{q^{r+1}}$, we can take
  \[t = \frac{q^a + q^b}{2} + \frac{q^a - q^b}{2}i' + \frac{c}{2}j' +
    \frac{c}{2}i'j'.\] As written, $t$ is an element of
  $\OO_q \otimes \QQ$, but we can replace division by 2
  with multiplication by an integer $m \equiv 2^{-1} \pmod{q^{r+1}}$
  to ensure $t \in \OO_q$ and $f(t) \equiv T \pmod{q^{r+1}}$.
  If $q=2$, then with
  $f(i') \equiv \begin{psmallmatrix} 0 & 1\\ 1 & 1 \end{psmallmatrix}
  \pmod{q^{r+1}}$ and
  $f(j') \equiv \begin{psmallmatrix} 0 & 1\\1 & 0 \end{psmallmatrix}
  \pmod{q^{r+1}}$, we can take
  \[t = (q^a + c) + (q^b - q^a)i' + (c - q^b + q^a)j' + (-c)i'j'.\]

  Let $\mathcal{O} = \frac{1}{q^{a+b}}\hat{t} \OO_q t$, where
  $\hat{}$ denotes the dual isogeny.  At $q' \neq q$, we have
  $\mathcal{O} \otimes \ZZ_{q'} \subset \mathcal{O}_0 \otimes \ZZ_{q'}
  \subset \End(E) \otimes \ZZ_{q'}$. This is because
  $\OO_q \otimes \ZZ_{q'} = \mathcal{O}_0 \otimes
  \ZZ_{q'}$, and
  $\hat{t}\OO_q t \subset \OO_q$.  At $q$,
  we have
  $f(\mathcal{O} \otimes \ZZ_q) = T^{-1}M_2(\ZZ_q)T \supset
  f(\mathcal{O}_0 \otimes \ZZ_q)$. By the local-global principle, we
  get $f(\mathcal{O} \otimes \ZZ_q) \subset f(\End(E) \otimes \ZZ_q)$
  if and only if $\mathcal{O} \subset \End(E)$.

In the general case, let $\Lambda_1, \Lambda_2, \Lambda_3$ be maximal orders in $M_2(\QQ_q)$ such that $\Lambda = \cap_{i=1}^3 \Lambda_i$. We can choose three orders $\Lambda_i$ for which this is true by Theorem~\ref{TuMainTheorem}. Let $\mathcal{O}(i)$ denote a global order such that $\mathcal{O}(i) \otimes \ZZ_{q'} \subset \mathcal{O}_0 \otimes \ZZ_{q'}$ for all $q' \neq q$ and $f(\mathcal{O}(i) \otimes \ZZ_{q}) = \Lambda_i$, as constructed in the previous paragraph. 

Let $\mathcal{O} = \cap_{i=1}^3 \mathcal{O}(i)$. 
By construction, $q^{a+b}\mathcal{O}(i) \subset \OO_q$, and by Corollary~\ref{cor:discminr}, we have $q^{v_q(\discrd(\OO_0))} \OO_q \subset \OO_0$. As $a+b \leq r \leq v_q(\discrd(\OO_0))$, we have $q^{2v_q(\discrd(\OO_0))} \OO(i) \subset \OO_0$, and thus $\disc(\OO_0)\OO(i) \subset \OO_0$. 
By Corollary~\ref{cor:intersection}, a basis of $\OO$ can be computed in polynomial time.

Tensoring by $\ZZ_{q'}$ for any prime $q'$ commutes with taking intersections, as $\ZZ_{q'}$ is a flat $\ZZ$-module. Hence $\mathcal{O} \otimes \ZZ_{q'} = \cap_{i=1}^3 \mathcal{O}(i) \otimes \ZZ_{q'} \subset \mathcal{O}_0 \otimes \ZZ_{q'}$ for all $q' \neq q$, and $f(\mathcal{O} \otimes \ZZ_q) = \cap_{i=1}^3 f(\mathcal{O}(i) \otimes \ZZ_q) = \cap_{i=1}^3 \Lambda_i = \Lambda$. 

We have that $\mathcal{O} \otimes \ZZ_{q'} \subset \mathcal{O}_0 \otimes \ZZ_{q'} \subset \End(E) \otimes \ZZ_{q'}$ for all $q' \neq q$, so $\mathcal{O} \subset \End(E)$ if and only if $\mathcal{O} \otimes \ZZ_q \subset \End(E) \otimes \ZZ_q$. As $f$ is an isomorphism, this is equivalent to $\Lambda \subset f(\End(E) \otimes \ZZ_q)$, as desired.\end{proof}

\section{Finding $\Lambda_E$ in the Bruhat-Tits Tree}\label{sect:path}
Let $\Lambda_E = f(\End(E) \otimes \ZZ_q)$ and $r = d(\Lambda_E, M_2(\ZZ_q))$. Once we have computed $r$, we know that $\Lambda_E$ is of the form $T^{-1}M_2(\ZZ_q)T$, where $T$ is a matrix associated to a matrix path of length $r$. In this section, we show how to recover the matrix path one step at a time, which will allow us to compute $\Lambda_E$.

\begin{prop}\label{defLambdagq}  Suppose $\Lambda$ is a maximal order such that $d(M_2(\ZZ_q), \Lambda) = r$. Let $\gamma$ be the matrix associated to a matrix path $\{c_i\}_{i=1}^k$, where $1 \leq
  k\leq r.$ Then $\Lambda \supset \ZZ_q + q^{r-k}\gamma^{-1} M_2(\ZZ_q) \gamma$ if and only if $\Lambda$ corresponds to a matrix path starting with $\{c_i\}_{i=1}^k$.\end{prop}

\begin{proof} 
	If $\Lambda$ corresponds to a matrix path starting with $\{c_i\}_{i=1}^k$, then the path from $M_2(\ZZ_q)$ to $\Lambda$ consists of the path from $M_2(\ZZ_q)$ to $\gamma^{-1} M_2(\ZZ_q) \gamma$ of length $k$, followed by the path from $\gamma^{-1}M_2(\ZZ_q)\gamma$ of length $r-k$.   Thus, $d(\Lambda, \gamma^{-1}M_2(\ZZ_q) \gamma) = r-k$. By Proposition~\ref{defNr}, this implies $\Lambda \supset \ZZ_q + q^{r-k}\gamma^{-1} M_2(\ZZ_q) \gamma$. 
	Conversely, suppose $\Lambda \supset \ZZ_q + q^{r-k}\gamma^{-1} M_2(\ZZ_q) \gamma$. By Proposition~\ref{defNr}, this implies $d(\Lambda, \gamma^{-1}M_2(\ZZ_q) \gamma) \leq r-k$. Consider the path of length $k$ from $M_2(\ZZ_q)$ to $\gamma^{-1}M_2(\ZZ_q)\gamma$, followed by the path from $\gamma^{-1}M_2(\ZZ_q)\gamma$ to $\Lambda$. This forms a path of length at most $r$ from $M_2(\ZZ_q)$ to $\Lambda$. Since $d(M_2(\ZZ_q), \Lambda) = r$, this must be the unique nonbacktracking path from $M_2(\ZZ_q)$ to $\Lambda$, which implies that the matrix path corresponding to $\Lambda$ must start with $\{c_i\}_{i=1}^k$.
\end{proof}

We obtain the following algorithm to recover the matrix path $\{d_i\}_{i=1}^r$ corresponding to $\Lambda_E$. For each $\gamma_c$ as in Definition~\ref{def:matrixpath}, we check if $\ZZ_q + q^{r-1}\gamma_c^{-1}M_2(\ZZ_q)\gamma_c \subset \Lambda_E$. By Proposition~\ref{defLambdagq}, we have  $\ZZ_q + q^{r-1}\gamma_c^{-1}M_2(\ZZ_q)\gamma_c \subset \Lambda_E$ if and only if $d_1 = \gamma_c$, so we recover the first matrix in the path in $q+1$ checks. Once we have recovered the first $k-1$ matrices, we test each of the $q$ possibilities for $d_k$, continuing until we have recovered the full matrix path.

\begin{alg}\textbf{Computing the path from $M_2(\ZZ_q)$ to $\Lambda_E$} \label{alg:path}

\textbf{Input:} An order $\mathcal{O}_0 \subset \End(E)$; a prime $q \neq p$; $q$-maximal $q$-enlargement $\OO_q$ of $\OO_0$ with basis $B$; $r:=d(M_2(\ZZ_q), \Lambda_E)$; an isomorphism $f: \mathcal{O}_0 \otimes \QQ_q \to M_2(\QQ_q)$ such that $f(\OO_q \otimes \ZZ_q) = M_2(\ZZ_q)$ computed mod $q^{r+1}$; for each $\gamma_c \in \Sigma$, an element $t_c \in \OO_q$ such that $f(t_c) \equiv \gamma_c \pmod{q^{r+1}}$; 

\textbf{Output:} $\gamma$ such that $\Lambda_E = \gamma^{-1}M_2(\ZZ_q)\gamma$
	\begin{enumerate} 
		\item Set $k:=1$, $\gamma':=\Id$, $t':=\Id$, $d_{0}:=\Id$. 
		\item While $k \leq r$:
                  \begin{enumerate}
			\item For each $\gamma_c \in \Sigma$ as in Definition~\ref{def:matrixpath} such that $\gamma_c d_{k-1} \not \in q M_2(\ZZ_q)$:
			\begin{enumerate}
				\item Set $\gamma:=\gamma_c \gamma'$, $t:=t_c t'$
				\item Set $B_{\gamma}:= \{q^{r-2k}\hat{t} b t: b \in B\}$
				\item Apply Proposition~\ref{prop:eta} to each $b_{\gamma} \in B_{\gamma}$ to determine if $b_{\gamma} \in \End(E)$. Each $b_{\gamma}$ can be written as $\frac{\beta}{q^{3}}$ where $\beta \in \End(E)$. 
				\item If for all $b_{\gamma} \in B_{\gamma}$, we have $b_{\gamma} \in \End(E)$: Set $\gamma':=\gamma$, $t':=t$, $d_k:=\gamma_c$, $k:=k+1$, and return to Step 2.
			\end{enumerate}
		\end{enumerate}
		\item Output $\gamma$.

	\end{enumerate}

\end{alg}

\begin{figure}[h]
  \centering \includegraphics[scale=0.22]{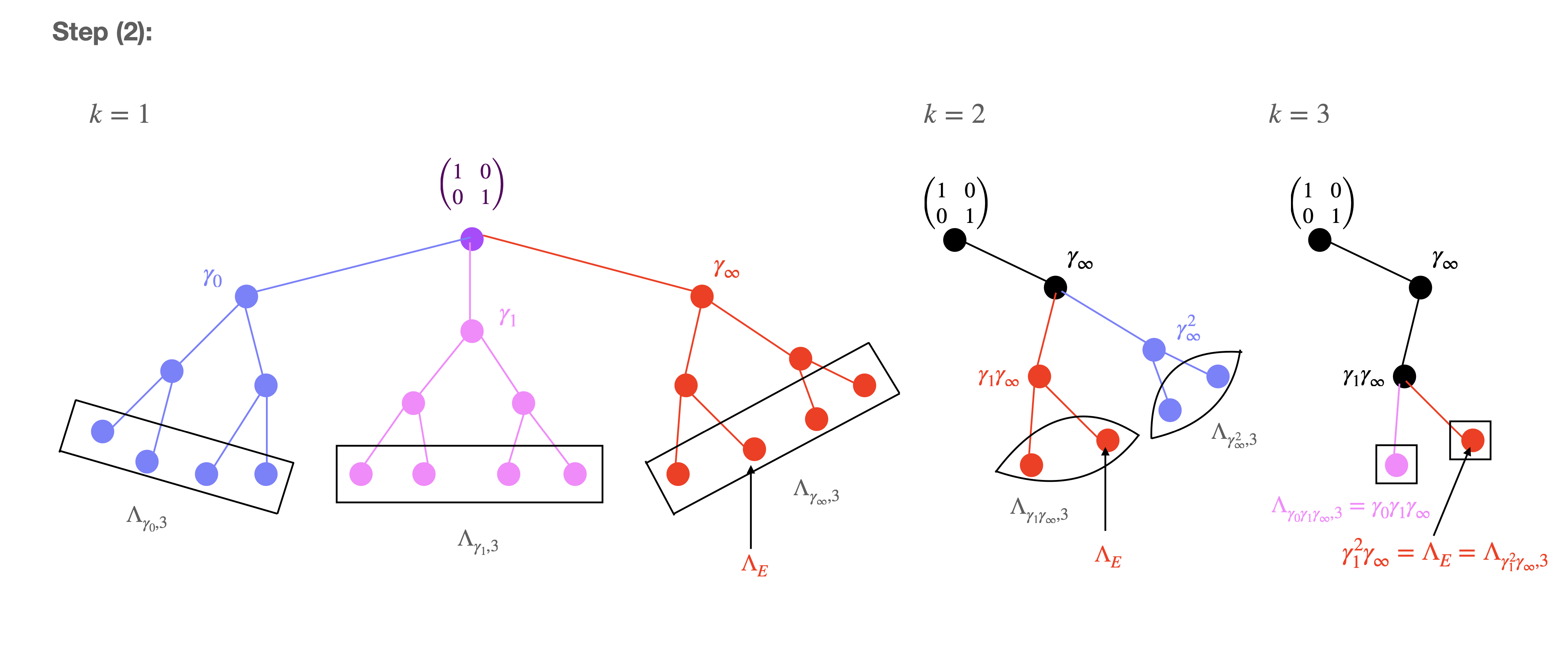}
  \vspace{-3em}
\caption{Algorithm~\ref{alg:path} Step 2 with $q=2$ and $d(\Lambda_E, M_2(\ZZ_q))) = 3$. Black edges indicate the portion of the path determined with previous values of $k$.}
\end{figure}

\vspace{-.5em}

\begin{prop} Algorithm~\ref{alg:path} is correct and requires at most
  $4(rq + 1)$ applications of Proposition~\ref{prop:eta}.
In each
  application, the input for the algorithm in Proposition~\ref{prop:eta} has $n=q^3$.  The inputs for Proposition~\ref{prop:eta} satisfy $\deg(\beta) \leq q^{2r} \max\{\deg(t_c)^{2r}: c=0,1,\ldots, q-1, \infty\}\max\{\deg(b): b\in B\}.$
\end{prop}

\begin{proof}
We know that for some $T_E = d_r d_{r-1} \cdots d_1$, we have $\Lambda_E = T_E^{-1}M_2(\ZZ_q)T_E$. We show that $T_E$ is the output of the algorithm. 

Let $\gamma = c_k \cdots c_1$. In the construction in
Corollary~\ref{cor:global}, with
$\Lambda = \gamma^{-1} M_2(\ZZ_q) \gamma$, we obtain $\OO = \frac{1}{q^{k}} \hat{t} \OO_q t$. 

Letting $\Lambda_{\gamma} = \ZZ_q + q^{r-k} \gamma^{-1} M_2(\ZZ_q) \gamma$, 
the elements of $B_{\gamma}$ generate an order $\OO_{\gamma}$ such that $f(\OO_{\gamma} \otimes \ZZ_{q}) = \Lambda_{\gamma}$ and for $q' \neq q$, $\OO_{\gamma} \otimes \ZZ_{q'} \subset \End(E) \otimes \ZZ_{q'}$.
Thus, $B_\gamma \subset \End(E)$ if and only if
$\Lambda_{\gamma} \subset \Lambda_E$. By
Proposition~\ref{defLambdagq}, we have
$\Lambda_{c_k \cdots c_2c_1} \subset \Lambda_E$ if and only if
$c_i = d_i$ for each $i \leq k$. Therefore, Step 2(a)(iii) has
$B_{\gamma} \subset \End(E)$ if and only if $c_i = d_i$ for each
$i \leq k$. The output after the $k=r$ step must be
$\gamma = d_rd_{r-1}\cdots d_1 = T_E$, as desired.

When $k=1$, there are $q+1$ choices for $\gamma_c$. For $1 < k \leq r$, there are $q$ choices for $\gamma_c$. Thus, we check if $B_{\gamma} \subset \End(E)$ for at most $rq+1$ values of $\gamma$. 

Note that $\Lambda_{\gamma'} \subset \Lambda_{\gamma}$: If $\Lambda'$ is any maximal order such that $d(\Lambda', \gamma M_2(\ZZ_q) \gamma^{-1}) \leq r - k$, then $d(\Lambda', \gamma' M_2(\ZZ_q) (\gamma')^{-1}) \leq r-k+1$ by triangle inequality. By Proposition~\ref{defLambdagq}, as $\Lambda_{\gamma'}$ and $\Lambda_{\gamma}$ are equal to the intersection of orders containing them, this shows that $\Lambda_{\gamma'} \subset \Lambda_{\gamma}$. Furthermore,  $v_q([\Lambda_{\gamma}: \Lambda_{\gamma'}]) =  v_{q}\discrd(\Lambda_{\gamma'}) - v_{q}\discrd(\Lambda_{\gamma}) = 3$. As the previous step verifies that $B_{\gamma'} \subset \End(E)$, this shows that elements of $B_{\gamma}$ can be expressed as $\frac{\beta}{n}$ for $\beta \in \End(E)$ and $n =q^3$. By Corollary~\ref{cor:global}, we have that $\deg(\beta)$ is polynomially-sized in $q^r$ and the degrees of basis elements of $\OO_0$. 
\end{proof}

\section{Bass Orders}\label{sect:special}
Let $\Lambda_0 = f(\mathcal{O}_0 \otimes \ZZ_q)$ and $\Lambda_E = f(\End(E) \otimes \ZZ_q)$.
If $\Lambda_0$ is Bass, the subgraph of maximal orders containing $\Lambda_0$ forms a path which can be recovered efficiently. In this case, we can give a simpler algorithm which performs a binary search along the path to find $\Lambda_E$. 
\begin{alg}\textbf{Finding $\Lambda_E$ When $\Lambda_0$ is Bass}\label{alg:Bass}

\textbf{Input:} An order $\mathcal{O}_0 \subset \End(E)$ which is Bass at $q$, with basis elements expressed in an efficient HD representation; $e = v_q(\discrd(\mathcal{O}_0))$

\textbf{Output:} $\gamma$ such that $\Lambda_E = \gamma^{-1}M_2(\ZZ_q)\gamma$
	\begin{enumerate} 
		\item Compute a $q$-maximal $q$-enlargement $\OO_q \supset \mathcal{O}_0$ and an isomorphism $f: \OO_q \otimes \QQ_q \to M_2(\QQ_q)$ such that $f(\OO_q \otimes \ZZ_q) = M_2(\ZZ_q)$ up to precision $q^{e+1}$. Set $\Lambda_0 := f(\mathcal{O}_0 \otimes \ZZ_q)$.
		\item Compute a list $L$ of matrices $T_i$ associated to matrix paths such that $\Lambda_0 \subset T_i^{-1} M_2(\ZZ_q) T_i$.  Index the matrices $T_i$, starting with $i=1$, such that $T_i$ and $T_{i+1}$ are adjacent in the Bruhat-Tits tree. 
		\item While $|L| > 1$:
		\begin{enumerate}
			\item Set $m:= \lfloor\frac{|L|}{2} \rfloor$, $\Lambda_{\text{start}} := T_1^{-1}M_2(\ZZ_q) T_1$, and $\Lambda_{\text{mid}} := T_{m}^{-1}M_2(\ZZ_q)T_{m}$.
			\item Compute a basis $B$ for an order $\mathcal{O}$ such that $f(\mathcal{O} \otimes \ZZ_q) = \Lambda_{\text{start}} \cap \Lambda_{\text{mid}}$ and $\mathcal{O} \otimes \ZZ_{q'} \subset \OO_0 \otimes \ZZ_{q'}$ for all $q' \neq q$. (Corollary~\ref{cor:global}) 
			\item For each $b \in B$, use Proposition~\ref{prop:eta} to determine if $b \in \End(E)$. 
			\item If $b \in \End(E)$ for all $b$, set $L := \{T_i: 1 \leq i \leq m\}$. Otherwise, set $L:= \{T_i: m < i \leq |L|\}$, and reindex the matrices, by replacing the index $i+m$ with the index $i$ for $1 \leq i \leq |L| - m$. 
		\end{enumerate}
		\item Output the single element of $L$.
	\end{enumerate}

\end{alg}

\begin{prop}\label{prop:algbass} Algorithm~\ref{alg:Bass} is correct, requires at most $4\log_2(e+1)$ applications of Proposition~\ref{prop:eta}, and runs in polynomial time in $\log(q)$ and the size of $\mathcal{O}_0$. 
\end{prop}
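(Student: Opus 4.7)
The plan is to separately verify (i) that the list $L$ produced in Step 2 enumerates all maximal orders of $M_2(\QQ_q)$ containing $\Lambda_0 := f(\mathcal{O}_0 \otimes \ZZ_q)$ and forms a path of length at most $e$; (ii) that the while loop in Step 3 is a valid binary search for $\Lambda_E$ in $L$; and (iii) that the stated resource bounds hold.

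For (i), since $\mathcal{O}_0$ is Bass at $q$ and $q \neq p$, the local order $\Lambda_0$ is a Bass order in the split algebra $M_2(\QQ_q)$. The key structural input is that the set of maximal orders containing a Bass order in $M_2(\QQ_q)$ forms a (possibly degenerate) path in the Bruhat--Tits tree; see e.g.\ \cite[Ch.\ 24]{Voight2021}. By Corollary~\ref{cor:dist}, any two maximal orders containing $\Lambda_0$ are within $e$ steps of each other, so this path has length at most $e$ and $|L| \leq e+1$. To construct $L$ concretely, first apply Propositions~\ref{Oq} and \ref{iso} to compute $\tilde{\mathcal{O}}$ and $f$ to precision $q^{e+1}$, so that $M_2(\ZZ_q)$ itself lies on the path. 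Then walk outward: at each vertex, check for each of the $q+1$ neighbors $T^{-1}M_2(\ZZ_q)T$ whether it still contains $\Lambda_0$, a linear-algebra test on the basis of $\Lambda_0$ modulo $q^{e+1}$ justified by Lemma~\ref{precision}. Because $\Lambda_0$ is Bass, at most two neighbors survive at any vertex, so the walk traces out the entire path in polynomial time.

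For (ii), at each iteration Corollary~\ref{cor:global} produces a global order $\mathcal{O}$ such that $\mathcal{O} \subset \End(E)$ if and only if $\Lambda_{\text{start}} \cap \Lambda_{\text{mid}} \subset \Lambda_E$. Applying Corollary~\ref{cor:Lambdatilde} with $\ell = 0$ and $P$ the sub-path from $\Lambda_{\text{start}}$ to $\Lambda_{\text{mid}}$, the maximal orders containing this intersection are exactly those along the sub-path, which by the indexing of $L$ is $\{T_1^{-1}M_2(\ZZ_q)T_1, \ldots, T_m^{-1}M_2(\ZZ_q)T_m\}$. Hence the branching in Step 3(d) correctly determines in which half of $L$ the target $\Lambda_E$ lies. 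The invariant $\Lambda_E \in L$ holds initially (since $\mathcal{O}_0 \subset \End(E)$ implies $\Lambda_0 \subset \Lambda_E$, so $\Lambda_E$ is a maximal order containing $\Lambda_0$) and is preserved by each branching step. After $\lceil \log_2(e+1) \rceil$ iterations the loop terminates with $L = \{T_E\}$.

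For (iii), each iteration invokes Proposition~\ref{prop:eta} at most four times, once per basis vector of $\mathcal{O}$, giving a total of at most $4\log_2(e+1)$ calls. The remaining subroutines — Propositions~\ref{Oq}, \ref{iso}, the construction in Corollary~\ref{cor:global}, and ordinary list and matrix manipulations — all run in polynomial time in $\log(q)$ and the size of $\mathcal{O}_0$. The main obstacle is (i): one must both invoke the structural theorem identifying Bass orders with paths in the Bruhat--Tits tree and provide an efficient enumeration of that path; once (i) is in hand, the binary search and the local-to-global containment translation via Corollary~\ref{cor:global} are routine.
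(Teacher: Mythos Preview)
Your argument follows the same overall strategy as the paper's proof, and parts (ii) and (iii) are handled correctly; in fact your justification of the binary search via Corollary~\ref{cor:Lambdatilde} (with $\ell=0$) and Corollary~\ref{cor:global} is more explicit than the paper, which simply asserts that each iteration ``tests which half of the path contains $\Lambda_E$.''

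There is, however, a gap in your treatment of (i) concerning the runtime. Your walk-outward procedure tests all $q+1$ (or $q$) neighbors at each vertex along the path, so the total number of containment tests is $\Theta(eq)$. Even though each individual test is polynomial in $\log q$, the aggregate cost is linear in $q$, which does not meet the bound ``polynomial in $\log(q)$'' stated in the proposition. The paper avoids this by citing \cite[Algorithm~4.1, Proposition~4.2]{EHLMP20}, which enumerates the path of maximal overorders of a local Bass order directly (via successive hereditary closures / radical idealizers) rather than by scanning neighbors, and runs in time polynomial in $\log q$ and the size of $\Lambda_0$. If you want to keep your self-contained description, you would need to replace the brute-force neighbor search with such a structural computation; otherwise, citing \cite{EHLMP20} as the paper does closes the gap.
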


\begin{proof} Step 1 can be done in polynomial-time by Propositions~\ref{Oq} and~\ref{iso}. For Step 2, the neighbors of $M_2(\ZZ_q)$ which contain $\Lambda_0$ are the ones corresponding to the lattices which are stable under $\Lambda_0$, which can be computed by computing the common eigenspaces of basis elements of $\Lambda_0$. This is independent of the basis chosen for $\Lambda_0$. Since $\Lambda_0$ is Bass, there will be at most two neighbors containing it. The list $L$ has size at most $e+1$, the orders of $L$ form a path, and $L$ can be computed in polynomial time in $\log(q)$ and the size of $\Lambda_0$ by \cite[Algorithm 4.1, Proposition 4.2]{EHLMP20}. As $\Lambda_E$ contains $\Lambda_0$, the order $\Lambda_E$ must be one of the orders in the list $L$. Each iteration of Step 3 tests which half of the path contains $\Lambda_E$ and discards the other half. After $\log_2(e+1)$ loops, there is only one order remaining in $L$, which must be $\Lambda_E$.
\end{proof}

\section{Computing the Endomorphism Ring}\label{sect:fullalg}
Now we can give the full algorithm to compute the endomorphism ring of
$\End(E)$ on input of two noncommuting endomorphisms, $\alpha_1$ and
$\alpha_2$, which generate a subring $\mathcal{O}_0$ of $\End(E)$. At
every prime $q$ for which $\mathcal{O}_0$ is not maximal, we find the
path from $M_2(\ZZ_q)$ to $\End(E) \otimes \ZZ_q$ in the Bruhat-Tits
tree. We emphasize that the only tool we have to distinguish
$\End(E) \otimes \ZZ_q$ from the other orders of the Bruhat-Tits tree
is the existence of an algorithm which determines if a local order
$\Lambda$ is contained in $\End(E) \otimes \ZZ_q$.

\begin{alg}\label{alg:main}\textbf{Computing the Endomorphism Ring}

  \textbf{Input}: A supersingular elliptic curve $E$ defined over
  $\FF_{p^2}$; a suborder $\mathcal{O}_0$ of $\End(E)$ represented by
  a basis $\{1, \alpha_1, \alpha_2, \alpha_1 \alpha_2\}$, such that
  $\alpha_1$ and $\alpha_2$ can be evaluated efficiently on powersmooth
  torsion points of $E$; a factorization of $\discrd(\mathcal{O}_0)$

\textbf{Output}: A basis for $\End(E)$
    \begin{enumerate}
	\item For each prime $q \mid (\discrd(\mathcal{O}_0)/p)$:
	\begin{enumerate}
		\item Test if $\mathcal{O}_0 \otimes \ZZ_q$ is Bass. If so, use Algorithm~\ref{alg:Bass} to compute $\tilde{\OO}_q:=\End(E) \otimes \ZZ_q$.
		\item Compute a $q$-maximal $q$-enlargement $\OO_q$ of $\mathcal{O}_0$. If $q = p$, set $\tilde{\OO}_q:=\OO_q$. Otherwise, proceed to Step 2. \cite[Algorithm 3.12, 7.9, 7.10]{Voight13}
		\item Compute the distance $r$ between $\OO_q \otimes \ZZ_q$ and $\End(E) \otimes \ZZ_q$, considered as vertices in the Bruhat-Tits tree. [Algorithm~\ref{alg:distance}]
		\item Compute an isomorphism $f: \OO_q \otimes \ZZ_q \to M_2(\ZZ_q)$ given modulo $q^{r+1}$, which extends to an isomorphism $f: \OO_q \otimes \QQ_q \to M_2(\QQ_q)$. [Proposition~\ref{iso}]
		\item Compute the matrix $\gamma$ such that $f(\End(E) \otimes \ZZ_q) = \gamma^{-1} M_2(\ZZ_q) \gamma$.  [Algorithm~\ref{alg:path}]
		\item Compute a basis for a global order $\tilde{\mathcal{O}}_{q}$ such that $f(\tilde{\mathcal{O}}_q \otimes \ZZ_q) = \gamma^{-1}M_2(\ZZ_q)\gamma$ and $\tilde{\mathcal{O}}_q \otimes \ZZ_{q'} \subset \mathcal{O}_0 \otimes \ZZ_{q'}$ for all $q' \neq q$. [Corollary ~\ref{cor:global}]
	\end{enumerate}
	\item Return a basis for $\mathcal{O}_0 + \sum_{q \mid \discrd(\mathcal{O}_0)/p} \tilde{\mathcal{O}}_q$.

\end{enumerate}

\end{alg}

We now prove Theorem~\ref{alg:full}.

\begin{proof} The order $\mathcal{O}_0 \otimes \ZZ_q$ is Bass if and only if $\mathcal{O}_0 \otimes \ZZ_q$ and the radical idealizer $(\mathcal{O}_0 \otimes \ZZ_q)^{\natural}$ are Gorenstein~\cite[Corollary 1.3]{CSV2021}. An order is Gorenstein if and only if the associated ternary quadratic form is primitive \cite[Theorem 24.2.10]{Voight2021}, which can be checked efficiently. 

  For Steps 1b and 1d, we must compute a multiplication table and
  reduced norm form $Q$ for $\mathcal{O}_0$. Coefficients are given by
  the reduced traces of pairwise products of the basis, which can be
  evaluated efficiently using a modified Schoof's Algorithm by
  computing the trace on powersmooth torsion points (see \cite[Theorem
  6.10]{BCEMP}).

Each substep of Step 1 has polynomial runtime. In the worst case, Step 1 requires $\sum_{i=1}^m 4(e_iq_i+2)$ applications of Proposition~\ref{prop:eta}, where $\discrd(\mathcal{O}_0) = \prod_{i=1}^m q_i^{e_i}$.

By construction, $q^{2e}\mathcal{O}_q \subset \OO_0$, and hence $\disc(\OO_0) \OO_q \subset \OO_0$. By Corollary~\ref{cor:intersection}, a basis for the sum $\mathcal{O}_0 + \sum_{q \mid \discrd(\mathcal{O}_0)/p} \mathcal{O}_q$ can be computed in polynomial-time. 
By construction, the output has completion  $\OO_q \otimes
\ZZ_q = \End(E) \otimes \ZZ_q$ at every prime $q \mid \discrd(\OO_0)$
and also still at all other
primes. By the local-global principle, the output is $\End(E)$. 
\end{proof}

We give an example of our algorithm next. Since the necessary higher-dimensional isogeny algorithms have not been implemented yet to enable us to use the division algorithm, our example works with small primes $q$ so that we can check divisibility directly. 

\begin{ex*}
	Let $p=103$ and let $E$ be the elliptic curve with
        $j$-invariant $69$, given by the model $y^2 = x^3 + 37x +
        38$. For this choice of $p$, $B_{p,\infty}$ has a
        $\mathbb{Q}$-basis of the form $\{1,i,j,ij\}$ with $i^2=-1$
        and $j^2=-103$. 
The goal is to compute a maximal order $\mathcal{\tilde O} \subset
B_{p,\infty}$ with $\mathcal{\tilde O} \cong \End(E)$, given a subring
$\mathcal{O}_0$ of finite index.
	We assume that the starting order $\mathcal{O}_0 \subset \mathcal{\tilde
          O} \subset B_{p,\infty}$
       is given by the $\ZZ$-basis
       \[\left\{1, -11095 - \frac{21}{2}i - 11095 j - \frac{7}{2}ij, -49 -
         \frac{49}{2}i - 49j - \frac{49}{2}ij, \frac{107653}{2} +
         \frac{107653}{2}ij\right\}.\] It can be shown that
       $\mathcal{O}_0 \subseteq \End(E)$, and this order has reduced
       discriminant $\Delta = 7^5 \cdot 13^3 \cdot 103$.  Let
       $\beta_1 := -11095 - \frac{21}{2}i - 11095j - \frac{7}{2}k$,
       $\beta_2 := -49 - \frac{49}{2}i - 49j - \frac{49}{2}ij$, and
       $\beta_3 := \frac{107653}{2} + \frac{107653}{2}ij$. We will
       express basis elements as elements of $B_{p,\infty}$ rather
       than as endomorphisms and, when needed, check when an element
       is an endomorphism. We are using an isomorphism with
       $\End(E) \otimes \QQ$ for which we know that $7i$ and $j$ are
       endomorphisms.

	For each $q \mid(\Delta/p)$, we will now compute a $q$-maximal
        $q$-enlargement of $\OO_0$. The order $\OO$ with basis
        $\{1, -i, -\frac{1}{2}i - \frac{1}{2}ij, \frac{1}{2} +
        \frac{1}{2}j \}$ is maximal and contains $\OO_0$. For $q=7$ and
        $q=13$, there is an isomorphism
        $\OO \otimes \ZZ_{q} \to M_2(\ZZ_q)$, given by
        $1 \mapsto \begin{psmallmatrix} 1& 0\\ 0&
          1\end{psmallmatrix}$,
        $i \mapsto \begin{psmallmatrix} 0& -1 \\ 1 & 0\end{psmallmatrix}$,
        $j\mapsto \begin{psmallmatrix} 0 & a\\ a& 0 \end{psmallmatrix}$, and
        $k \mapsto \begin{psmallmatrix} -a &0\\0 & a \end{psmallmatrix}$, where
        $a$ is a $q$-adic square root of $-103$.

	For $q=7$, a 7-maximal 7-enlargement of $\OO_0$ is given by
        the order $\mathcal{O}_7$ with basis
        \[\left\{1, -\frac{1}{2}i - j - \frac{1}{2}ij, -41i - j + 2ij,
          -\frac{1}{2} - \frac{119}{2}i + \frac{9}{2}j -
          \frac{15}{2}ij \right\},\] which has the property that
        $\OO_7 \otimes \ZZ_7 = \OO \otimes \ZZ_{7}$, so we may use the
        same isomorphism for $\OO_7 \otimes \ZZ_{7} \to M_2(\ZZ_7)$.

	We check that $\mathcal{O}_0$ is not Bass at $q=7$. Then, we find the distance between $\OO_7 \otimes \ZZ_7$ and $\Lambda_E$ by finding the least $k$ such that $7^k \OO_7 \subset \End(E)$. 

        In terms of the given basis for $\OO_0$, $\OO_7$ has basis
        given by
        \[\left\{1, 1 + \frac{1}{49} \beta_2, 1 + \frac{43}{7}\beta_1 -
        \frac{47}{49}\beta_2 + \frac{62}{49}\beta_3, -5 +
        \frac{52}{7}\beta_1 - \frac{37}{49}\beta_2 +
        \frac{75}{49}\beta_3\right\}.\] We see that
        $7^2 \OO_7 \subset \OO_0 \subset \End(E)$, so
        $d(M_2(\ZZ_7), \Lambda_E) \leq 2$. We can also check that
        $7 \OO_7 \subset \End(E)$: For example, for
        $\beta = 1 + \frac{1}{49} \beta_2 = -\frac{1}{2}i - j -
        \frac{1}{2}ij$, we have
        \[7 \beta = -\frac{7}{2} i - \frac{7}{2}ij - 7j = -7i\left(
        \frac{1}{2} + \frac{1}{2}j\right) - 7j.\] Since $E$ is
    defined over $\FF_p$ and $j$ is such that $j^2=-103$, $j$
    corresponds to an
        endomorphism of $E$, for example the $p$-power Frobenius on $E$. Since $E[2]$ is
        $\FF_p$-rational, $1+j$ is zero on the 2-torsion of $E$, and hence
        $\frac{1}{2} + \frac{1}{2}j$ is an endomorphism. It can also
        be checked that $7i$ is an endomorphism, so $7\beta \in \End(E)$. For the next basis
        element, we see that
        \[7(1 + \frac{43}{7}\beta_1 - \frac{47}{49}\beta_2 +
        \frac{62}{49}\beta_3) = 22883952 - 287i + 22883945j + 14ij\] is
        an endomorphism because $7i$ and $7ij$ are endomorphisms. For
        the last basis element, we see that
        \begin{align*}7\left( -5 + \frac{52}{7}\beta_1 - \frac{37}{49}\beta_2 +
        \frac{75}{49}\beta_3\right) = -\frac{7}{2} - \frac{833}{2}i +
        \frac{63}{2}j- \frac{105}{2}ij =\\ -7\left(\frac{1}{2} +
        \frac{1}{2}j\right) + 35j - 833i\left(\frac{1}{2} + \frac{1}{2} j\right)
        +728ij \in \End(E).\end{align*} This shows that
        $7 \OO_7 \subset \End(E)$. To see that
        $\OO_7 \not \subset \End(E)$, we can check that the first
        basis element $ -\frac{1}{2}i - j - \frac{1}{2}ij$ is not an
        endomorphism. Since $j$ is an endomorphism, it is enough to
        consider $-\frac{1}{2}i - \frac{1}{2}j$. Since
        $14(-\frac{1}{2}i -\frac{1}{2}ij) = -7i(1 + j)$ is not zero on
        $E[7]$, it follows that $-\frac{1}{2}i - \frac{1}{2}ij$ is not
        an endomorphism, so $\OO_7 \not \subset \End(E)$. Therefore,
        $d(M_2(\ZZ_7), \Lambda_E) = 1$.

	Locally, $\Lambda_E$ must be one of the neighbors of
        $\OO_7 \otimes \ZZ_{7}$. Under the isomorphism of
        $\OO_7 \otimes \ZZ_{7}$ to $M_2(\ZZ_7)$, where the $7$-adic
        square root $-103$ is specified mod $7^2$, we compute bases
        for global orders which are locally equal to neighbors of
        $\OO_7$ and obtain 8 candidates. We check the candidate
        $\frac{\Nrd(\gamma_6)}{7}\cdot (\gamma_6)^{-1}(\OO_7 \otimes
        \ZZ_{7})(\gamma_6)$, whose basis is given
        by \begin{align*}\left\{1, \frac{1051811}{14}i - 69985j -
          \frac{1961725}{14}ij, \frac{39997171}{7}i + 285869j -
          \frac{986434}{7}ij\right.,\\ \left.-\frac{140113}{2} +
          \frac{118328575}{14}i - \frac{2085159}{2}j +
          \frac{8810583}{14}ij\right\}.\end{align*} To check that
        $\frac{1051811}{14}i - 69985j - \frac{1961725}{14}ij$ is an
        endomorphism, for example, we can show that
        $1051811i - 14\cdot 69985j - 1961725ij$ is divisible by
        7. Since $7i$, $j$, and $7ij$ are endomorphisms, we can reduce
        coefficients of $i$ and $ij$ mod 49, and reduced coefficients
        of $j$ mod 7, to check divisibility by 7, to obtain the
        endomorphism $26i - 10ij$. We can check that this is a linear
        combination of $15i - 2ij$ and $17i + ij$, which are
        endomorphisms that are divisible by 7. The other basis
        elements can be handled similarly. This show that
        $\frac{\Nrd(\gamma_6)}{7} \cdot (\gamma_6)^{-1}(\OO \otimes
        \ZZ_{7})(\gamma_6)$ is the endomorphism ring at $q=7$.

Now we consider $q=13$. A $13$-maximal $13$-enlargement of $\OO_0$ is
given by $\OO_{13}$, which has the basis
$\{ 1, -49i, -\frac{1}{2} - \frac{21}{2}i + \frac{7}{2}j -
\frac{7}{2}ij, -\frac{1}{2} - 7i + \frac{21}{2}j + 14ij \}$; since
$\OO_{13} \otimes \ZZ_{13} = \OO \otimes \ZZ_{13}$, we may use the same
isomorphism to $M_2(\ZZ_{13})$.

        At $q=13$, we can check that $\OO_0$ is Bass. In this case, we
        can list the local maximal orders containing
        $\OO_0 \otimes \ZZ_{13}$, and they form a path in the
        Bruhat-Tits tree. As $v_{13}(\discrd(\OO_0)) = 3$, there are
        at most 4 maximal orders orders in this path.

We find that the maximal orders which contain $\OO_0$ locally are $\OO_{13} \otimes \ZZ_{13}$, $\gamma_4^{-1}\OO_{13} \gamma_4 \otimes \ZZ_{13}$,  $\gamma_4^{-1}\gamma_8^{-1} \OO_{13} \gamma_8 \gamma_4 \otimes \ZZ_{13}$, and  $\gamma_4^{-1}\gamma_8^{-1}\gamma_1^{-1} \OO_{13} \gamma_1\gamma_8 \gamma_4 \otimes \ZZ_{13}$. 

Now we can perform a binary search to find the endomorphism ring locally. A basis for $ \OO_{13} \cap \Nrd(\gamma_4)/13\cdot \gamma_4^{-1} \OO_{13} \gamma_4$, where $\gamma_4$ is computed with precision $13^4$, is $\{1, \beta_1, \beta_2, \beta_3\}$, where
\begin{align*}
\beta_1=&-\frac{255783291916664225834427}{2} - 1790483043416725858169326 i\\ &+
\frac{5371449130249643633209815}{2}j + 3580966086832917775040538ij;\\
\beta_2=&-110199150096307809609334 - \frac{3085576202696987671557039}{2}i\\ &+
2314182152022094999300327j + \frac{6171152405391392325644269}{2}ij;
                                                                              \text{
                                                                              and}\\
\beta_3=&-\frac{252873849100292556689371}{2} -
          1770116943702047896825597 i\\ &+
\frac{5310350831106143690476791}{2}j + 3540233887404095793651194 ij.
\end{align*}

We find that the basis elements are contained in $\End(E) \otimes \ZZ_{13}$, since $\frac{1}{2} \pm \frac{1}{2}j$, $7i$, and $7ij$ are endomorphisms, and one can check that the basis elements can be written as $\ZZ$-linear combinations of these endomorphisms. This shows that either $\OO_{13}$ or $\gamma_{4}^{-1} \OO_{13} \gamma_4$  is the endomorphism ring locally. A basis for $\OO_{13}$ is given by $\{1, -49i, -\frac{1}{2} - \frac{21}{2}i + \frac{7}{2}j - \frac{7}{2}ij, -\frac{1}{2} - 7i + \frac{21}{2}j + 14ij \}$, which are all endomorphisms. Therefore, $\OO_{13}$ is the endomorphism ring locally at $q=13$.

Finally, we can compute the endomorphism ring by computing the order
generated by $\OO_0$,
$\Nrd(\gamma_6)/7 \cdot (\gamma_6)^{-1}(\OO_7 \otimes
\ZZ_{7})(\gamma_6)$, and $\OO_{13}$. This gives us the maximal order
$\mathcal{\tilde O} \subseteq B_{p,\infty}$
with basis $ \{1, -\frac{17}{14}i - \frac{1}{14}ij, \frac{15}{7}i - \frac{2}{7}ij,
-\frac{1}{2} - \frac{1}{2}j \}$ and $\mathcal{\tilde O} \cong \End(E)$.

\end{ex*}

\section*{Acknowledgements} We thank Damien Robert for helpful
conversations about the algorithm in~\cite[Section 4]{Rob23b}.
We also
thank Yuri Zarhin for fruitful discussions about automorphisms of
products of elliptic curves.
We thank anonymous reviewers for feedback on a previous draft.

\bibliography{endoring.bib}
\bibliographystyle{alpha}

\newpage
\appendix
\section{Using Higher-Dimensional Isogenies for Testing Divisibility}

In this section, we give background on isogenies between products of
elliptic curves, give a detailed version of the algorithm
in~\cite[Section 4]{Rob23b}, prove correctness of the algorithm, and
give a complexity analysis.

\subsection{Isogenies between polarized abelian varieties and their degrees}\label{sec:notation}
\begin{defn}\cite[p.\ 126]{MilneAv}
  A {\em polarization} of an abelian variety $X$ defined over a field
  $k$ is an isogeny $\lambda: X \to X^{\vee}$ to the dual variety
  $X^{\vee}$ so that $\lambda_{\overline{k}}= \phi_{\mathcal{L}}$ for
  some ample invertible sheaf $\mathcal{L}$ on
  $X_{\overline{k}}$. Here $\phi_{\mathcal{L}}: A(k) \to \operatorname{Pic}(A)$ is
  the map given by $a \mapsto t_a^* \mathcal{L} \otimes
  \mathcal{L}^{-1}$ with $t_a$ the translation-by-$a$ map.
\end{defn}

\noindent{\bf Notation:} Given an isogeny $\Phi: A\to B$ between 
 abelian varieties we denote by $\Phi^{\vee}: B^{\vee} \to A^{\vee}$
 the dual isogeny (see~\cite[p.\ 143]{MumfordAV70}).
\begin{defn}\label{def:N-isogeny}
  Given a positive integer $N$, an {\em $N$-isogeny}
  $\Phi: (A, \lambda_A) \to (B,\lambda_B)$ between principally
  polarized abelian varieties $(A, \lambda_A)$ and $(B, \lambda_B)$ is
  an isogeny such that
  $\Phi^{\vee} \circ \lambda_B \circ \Phi = N \lambda_A$. An
  $(N,N)$-isogeny $\Phi: (A, \lambda_A) \to (B,\lambda_B)$ of abelian
  varieties of dimension $g$ is an $N$-isogeny whose kernel is
  isomorphic to $(\ZZ/N\ZZ)^g$.
\end{defn}

 Let $A$ be an abelian variety with a polarization $\lambda$. Since
  $\lambda$ is an isogeny $A \to \hat A$, it has an inverse in
  $\Hom(\hat A,A) \otimes \mathbb{Q}$. The {\em Rosati involution} on
 $\Hom(\hat A, A) \otimes \mathbb{Q}$ corresponding to $\lambda$ is
 \[ a \mapsto a^{\dagger} = \lambda^{-1} \circ \hat \alpha \circ \lambda.
   \]
   In this paper we will consider endomorphisms of products of
   elliptic curves and abelian varieties. Given an abelian variety
   $A$, an integer $r>1$ and isogenies $\phi_{i,j}: A \to A$ for
   $1\leq i,j\leq r$, the $r \times r$ matrix $M =(\phi_{i,j})_{1\leq
     i,j \leq r}$
   represents the isogeny \begin{align*}\Phi:\, & A^r \to A^r {\text{
         sending }}\\&(P_1, \dots,P_r) \text{ to } \left(\phi_{1,1}(P_1) +
     \dots+\phi_{1,r}(P_r), \dots,\phi_{r,1}(P_1) + \dots+
     \phi_{r,r}(P_r)\right).\end{align*}

   We refer to this as the {\em matrix form of $\Phi$}.

\begin{defn}
  Let $A$ be a principally polarized abelian variety. Consider
  $\Phi:A^r \to A^r$ with matrix form $M= (\phi_{i,j})_{1 \leq
    i,j \leq r}$ as above.
 Let $\phi_{i,j}^{\dagger}: A \to A$ be the Rosati involution of
$\phi_{i,j}$.
Define $\hat \Phi: A^r \to A^r$ as the endomorphism represented by the matrix
$\hat M=(\phi_{j,i}^{\dagger})_{1 \leq i,j \leq r}$.
\end{defn}
Definition~\ref{def:N-isogeny} can also be rephrased as follows,
see~\cite[Section 3.1]{Rob2023}.
\begin{prop}
Let $A$ be principally polarized, and let $\Phi: A^r\to A^r$ be an
isogeny with matrix form $M$. Then $\hat M \cdot M = N\cdot Id_r$ if and only if $\Phi$ is an $N$-isogeny with respect to the product polarization.\end{prop}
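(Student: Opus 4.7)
The plan is to translate the defining equation $\Phi^{\vee} \circ \lambda_{A^r} \circ \Phi = N\lambda_{A^r}$ into a matrix identity on $A$ and then use the Rosati relation to match it against $\hat M \cdot M$. Under the canonical identification $(A^r)^{\vee} \cong (A^{\vee})^r$, the product polarization $\lambda_{A^r}$ is given by the diagonal matrix $\operatorname{diag}(\lambda_A,\ldots,\lambda_A)$, while by functoriality of duality for products the dual isogeny $\Phi^{\vee}$ is represented in matrix form by $M^{\vee} = (\phi_{j,i}^{\vee})_{1\leq i,j\leq r}$, i.e.\ transpose-and-dualize-entrywise.

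Next I would compute each side of the defining equation entrywise. The composition of isogenies in matrix form matches matrix multiplication, so the $(i,j)$-entry of the matrix of $\Phi^{\vee}\circ\lambda_{A^r}\circ\Phi$ is
\[
\sum_{k=1}^{r} \phi_{k,i}^{\vee}\circ\lambda_A\circ\phi_{k,j}.
\]
Applying the Rosati identity $\phi^{\vee}\circ\lambda_A = \lambda_A\circ\phi^{\dagger}$, which is just a rewriting of $\phi^{\dagger}=\lambda_A^{-1}\circ\phi^{\vee}\circ\lambda_A$, this becomes
\[
\lambda_A\circ\sum_{k=1}^{r}\phi_{k,i}^{\dagger}\circ\phi_{k,j} \;=\; \lambda_A\circ (\hat M\cdot M)_{i,j}.
\]
On the other hand, the $(i,j)$-entry of the matrix of $N\lambda_{A^r}$ is $N\delta_{ij}\lambda_A = \lambda_A\circ (N\cdot \Id_r)_{i,j}$.

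Finally, since $\lambda_A\colon A\to A^{\vee}$ is an isogeny, left composition by $\lambda_A$ is injective on $\Hom(A,A)\otimes\QQ$, so equality of entries under the $\lambda_A\circ(-)$ wrapper is equivalent to equality of the underlying entries in $\Hom(A,A)$. Thus the full matrix equation $\Phi^{\vee}\circ\lambda_{A^r}\circ\Phi = N\lambda_{A^r}$ holds if and only if $\hat M\cdot M = N\cdot \Id_r$, and both directions of the biconditional fall out simultaneously.

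The step that requires the most care is the identification of the matrix form of $\Phi^{\vee}$ as $(\phi_{j,i}^{\vee})$: one needs to track the canonical iso $(A^r)^{\vee}\cong (A^{\vee})^r$ induced by the projections $\pi_i\colon A^r\to A$ and the injections $\iota_j\colon A\hookrightarrow A^r$, and verify that $\pi_j^{\vee}=\iota_j$ (under the principal polarization) so that the entries of $\Phi^{\vee}$ are exactly $(\phi_{j,i})^{\vee}$. Once this is in hand, the remainder of the argument is the mechanical matrix computation sketched above.
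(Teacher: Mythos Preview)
Your proof is correct. The paper does not actually supply a proof of this proposition; it simply states the result with a pointer to \cite[Section 3.1]{Rob2023}. Your argument is the standard unwinding of the $N$-isogeny condition into a matrix identity, and the two key steps---identifying the matrix of $\Phi^{\vee}$ under $(A^r)^{\vee}\cong (A^{\vee})^r$ as the entrywise-dualized transpose, and rewriting each entry via $\phi^{\vee}\circ\lambda_A=\lambda_A\circ\phi^{\dagger}$---are handled correctly, as is the injectivity argument for stripping off $\lambda_A$.

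One small expository remark on your last paragraph: the identification $(A^r)^{\vee}\cong(A^{\vee})^r$ and the resulting description of the matrix of $\Phi^{\vee}$ are canonical and do not involve any polarization; what you need is that $\pi_j^{\vee}$ corresponds to the $j$-th inclusion $A^{\vee}\hookrightarrow(A^{\vee})^r$ under this canonical identification, which is a purely categorical fact about duals of products. The polarization enters only at the Rosati step. This does not affect the validity of your argument.
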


\begin{prop}\label{degreebound}  Let $E$ be an elliptic curve. Let $\Phi: E^r
  \to E^r$ be an $N$-isogeny of principally-polarized abelian
  varieties whose matrix form is $M= (\phi_{i,j})_{1 \leq i,j \leq r}$. Then the degrees of the isogenies $\phi_{i,j}: E \to E$ are
  bounded above by $N.$
\end{prop}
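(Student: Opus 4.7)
The plan is to extract the degree bound directly from the defining identity $\hat M \cdot M = N \cdot Id_r$ supplied by the preceding proposition. First I would specialize to the elliptic curve setting: since $E$ carries its canonical principal polarization, the Rosati involution on $\End(E) \otimes \QQ$ coincides with the dual-isogeny involution, so for any endomorphism $\phi$ of $E$ one has $\phi^{\dagger}\phi = \widehat{\phi}\phi = [\deg \phi]$, the multiplication-by-$\deg\phi$ endomorphism.

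Next I would expand the matrix identity $\hat M \cdot M = N \cdot Id_r$ entrywise. The $(i,i)$ entry of $\hat M \cdot M$ is
\[
\sum_{k=1}^{r} \phi_{k,i}^{\dagger}\,\phi_{k,i} \;=\; \sum_{k=1}^{r} [\deg \phi_{k,i}] \;=\; \Bigl[\sum_{k=1}^{r}\deg \phi_{k,i}\Bigr],
\]
which must equal $[N]$ on $E$. Comparing integers, $\sum_{k=1}^{r}\deg \phi_{k,i} = N$. Since each summand is a nonnegative integer, $\deg \phi_{k,i} \leq N$ for every entry in the $i$-th column; as $i$ was arbitrary, this bounds every column.

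To also handle the row direction, I would note that $\hat M \cdot M = N\cdot Id_r$ makes $M$ invertible over $\End(E)\otimes\QQ$ with inverse $N^{-1}\hat M$, so multiplying on the right gives $M\cdot \hat M = N \cdot Id_r$ as well. Repeating the diagonal computation, using the identity $\phi\phi^{\dagger} = [\deg \phi]$ (which holds because $\phi^{\dagger\dagger} = \phi$ and $\deg \phi = \deg \widehat \phi$), yields $\sum_{k=1}^{r}\deg\phi_{i,k} = N$, and thus $\deg\phi_{i,k} \leq N$ for all $i,k$.

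There is really no main obstacle here beyond being careful that the off-diagonal compatibility of the Rosati involution with the product polarization reduces to the componentwise dual-isogeny involution; once that is recorded, the bound is immediate from the diagonal entries of $\hat M \cdot M$. I would write the proof in just a few lines, treating the reduction of the Rosati involution to the dual isogeny as a standard fact referenced back to the discussion in Section~\ref{sec:notation}.
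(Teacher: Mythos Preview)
Your proof is correct and follows essentially the same approach as the paper: expand the diagonal of $\hat M \cdot M = N\cdot \Id_r$, use that the Rosati involution on an elliptic curve is the dual isogeny so $\phi^{\dagger}\phi = [\deg\phi]$, and conclude each $\deg\phi_{k,i}\le N$ from nonnegativity. Your additional row computation via $M\cdot\hat M = N\cdot\Id_r$ is harmless but superfluous, since letting $i$ range over all columns already bounds every entry.
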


\begin{proof} By the previous proposition,
  $\hat M \cdot M = N \cdot \Id_r$. In particular, the $i$-th diagonal
  entry of $\hat M \cdot M$ is given by
  $\sum_{j=1}^r \phi_{j,i}^{\dagger}\phi_{j,i} = N$. For elliptic
  curves, $\phi_{j,i}^{\dagger}$ is the dual isogeny of $\phi_{j,i}$, so we
  have $\sum_{j=1}^r \deg(\phi_{j,i}) = N$ (by convention, the degree
  of the 0 map is 0). As the degree of an isogeny is nonnegative,
  this implies that $\deg(\phi_{j,i}) \leq N$ for $1\leq i,j\leq r$.
\end{proof}
  \subsection{Isogeny Diamonds and Kani's Lemma}
  We now give the definition of an isogeny diamond in the setting of
  abelian varieties. This was first introduced by Kani~\cite{Kani97}
  for elliptic curves and generalized in~\cite{Rob2023} to principally
  polarized abelian varieties.
  \begin{defn}
    A {\em $(d_1,d_2)$-isogeny factorization configuration} is a
    $d_1\cdot d_2$-isogeny $f: A \to B$ between principally polarized
    abelian varieties of dimension $g$ which has two factorizations
    $f = f_1' \circ f_1 = f_2' \circ f_2$ with $f_1$ a $d_1$-isogeny,
    $f_2$ a $d_2$-isogeny. If, in addition, $d_1$ and $d_2$ are
    relatively prime we call this configuration a {\em
      $(d_1,d_2)$-isogeny diamond configuration}.

    \begin{center}
\
\UseComputerModernTips
\xymatrix @-1pc
{
A \ar[rr]_{f_1}\ar[dd]_{f_2}&&A_1 \ar[dd]_{f_1'}\\
&&\\
A_2 \ar[rr]_{f_2'}&& B
}
\end{center}
\end{defn}
\begin{lem}[Kani's Lemma]
  Let $f =    f_1' \circ f_1 = f_2' \circ f_2$ be a
  $(d_1,d_2)$-isogeny diamond configuration. Then
  $F=\left(\begin{matrix}f_1& \tilde f_1'\\
      -f_2&f_2'\\
  \end{matrix}\right)$ is $d$-isogeny $F: A \times B \to A_1 \times
A_2$ with $d=d_1+d_2$ and kernel $\Ker F = \{(\tilde{f_1}(P),
f_1'(P)): P \in A_1[d]\}$.
\end{lem}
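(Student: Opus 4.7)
The plan is to split the proof into two tasks: verify that $F$ is a $d$-isogeny, and identify its kernel explicitly. The first task reduces to checking $\hat F \circ F = [d] \cdot \Id$ and invoking the characterization from the proposition preceding Proposition~\ref{degreebound}. Once this is established, $F$ has degree $d^{2g}$ (where $g = \dim A = \dim B$), so $|\Ker F| = d^{2g}$, which matches $|A_1[d]| = d^{2g}$. Hence it suffices to exhibit an injection $A_1[d] \hookrightarrow \Ker F$.

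For the matrix computation, I would form $\hat F$ by transposing $F$ and applying the Rosati involution entrywise, and then multiply out. The diagonal entries of $\hat F \circ F$ reduce to $\tilde{f_1} \circ f_1 + \tilde{f_2} \circ f_2 = [d_1] + [d_2] = [d]$ and $\tilde{f_1'} \circ f_1' + \tilde{f_2'} \circ f_2' = [d_2] + [d_1] = [d]$, using that $f_i$ is a $d_i$-isogeny and $f_i'$ is a $d_{3-i}$-isogeny. The two off-diagonal entries vanish directly from the diamond identity $f_1' \circ f_1 = f_2' \circ f_2$ and its Rosati adjoint $\tilde{f_1} \circ \tilde{f_1'} = \tilde{f_2} \circ \tilde{f_2'}$.

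For the kernel, I would check that the map $\psi \colon A_1[d] \to A \times B$ defined by $\psi(P) = (\tilde{f_1}(P), f_1'(P))$ lands in $\Ker F$ and is injective. The first coordinate of $F(\psi(P))$ reduces to $[d_1] P + [d_2] P = [d] P = 0$ using $f_1 \tilde{f_1} = [d_1]$ and $\tilde{f_1'} f_1' = [d_2]$. The second coordinate, call it $R$, requires more care: I would post-compose separately with $f_2'$ and with $\tilde{f_2}$, simplify each expression using the diamond identity and its Rosati adjoint, and conclude that $R$ lies in $\Ker f_2' \cap \Ker \tilde{f_2} \subseteq A_2[d_1] \cap A_2[d_2] = A_2[\gcd(d_1,d_2)] = 0$. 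Injectivity of $\psi$ follows because $\tilde{f_1}(P) = 0$ implies $[d_1] P = f_1 \tilde{f_1}(P) = 0$, which combined with $[d] P = 0$ and $\gcd(d_1, d) = 1$ forces $P = 0$.

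The main obstacle is notational rather than conceptual: the $(2,2)$-entry of $F$ must be interpreted as a map $B \to A_2$ via the principal polarizations on $B$ and $A_2$, and one must track carefully when the Rosati involution has been applied. The argument that the second coordinate of $F(\psi(P))$ vanishes is also subtler than the first, since it does not reduce to a direct cancellation but instead uses an intersection-of-kernels argument powered by the coprimality $\gcd(d_1, d_2) = 1$; once these bookkeeping matters are fixed, everything else is routine.
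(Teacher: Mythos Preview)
Your proof is correct in outline and is essentially the standard argument found in the references the paper cites (the paper itself does not prove this lemma but simply refers to \cite[Lemma~6]{Rob2023} and \cite[Theorem~2.3]{Kani97}). Two small points to clean up. First, the $(2,2)$ diagonal entry of $\hat F \circ F$ is $f_1' \circ \tilde{f_1'} + f_2' \circ \tilde{f_2'}$, not $\tilde{f_1'}\circ f_1' + \tilde{f_2'}\circ f_2'$; as you wrote it the two summands are endomorphisms of $A_1$ and $A_2$ respectively and cannot be added, whereas the correct expression consists of two endomorphisms of $B$ summing to $[d_2]_B + [d_1]_B = [d]_B$. Second, the proposition you invoke is stated in the paper only for $\Phi:A^r\to A^r$, not for a product of distinct varieties; the same characterization $\hat F F = [d]$ holds in the mixed case $A\times B \to A_1\times A_2$ directly from Definition~\ref{def:N-isogeny} and the definition of the Rosati adjoint, but you should say this rather than cite the proposition verbatim. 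With those fixes, the degree count $\deg F = d^{2g}$ and the kernel identification via the injection $\psi$ and the intersection-of-kernels argument using $\gcd(d_1,d_2)=1$ are exactly right.
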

\begin{proof}
  This is Lemma~6 in~\cite{Rob2023}, which generalizes Theorem 2.3 in~\cite{Kani97}.
  \end{proof}

\subsection{Divide Algorithm}

The following gives more details for the algorithm in Proposition~\ref{prop:eta}, which is the algorithm described in \cite[Section 4]{Rob23b}. This algorithm also appears in~\cite[Section 4]{HW25}. 

\begin{defn}
Let $n$ be an integer whose factorization into prime powers is
$n=\ell_1^{e_1}\dots \ell_r^{e_r}$. We say that an integer $B$ is a
{\em powersmoothness bound} on $n$ and that $n$ is {\em $B$-powersmooth} if
\[ B \geq \max_{i}{\ell_i^{e_i}}.\]  
  \end{defn}

In the following algorithm, we are using endomorphisms given in HD representation which can be evaluated efficiently at $O(\log(\deg(\beta)))$-powersmooth torsion points of $E$.

\begin{alg}{Divide Algorithm}\label{alg:testing}

\textbf{Input:} A supersingular elliptic curve $E$ defined over $\FF_{p^k}$; $\beta \in \End(E)$ which is written as a sum $\beta = b_1\beta_1 + b_2 \beta_2 + b_3 \beta_3 + b_4 \beta_4$ where $\beta_i$ are linearly independent endomorphisms which can be evaluated efficiently at powersmooth torsion points of $E$ and $b_i \in \ZZ$; $n$ a positive integer; $Q$  the norm form such that $Q(x_1, x_2, x_3, x_4) = \deg(\sum_{i=1}^4 x_i \beta_i)$

\textbf{Output:} TRUE if $\frac{\beta}{n}$ is an endomorphism of $E$ and FALSE if $\frac{\beta}{n}$ is not an endomorphism.

\begin{enumerate}
    \item Compute $\deg(\beta)$. If $n^2 \nmid \deg(\beta)$, conclude that $\frac{\beta}{n}$ is not an endomorphism and output FALSE. Otherwise, set $N:=\deg(\beta)/n^2$.  
    \item Choose $a \in \ZZ$ such that $N':=N+a$ is powersmooth and $\gcd(N', pNn) = 1$.
    \item Compute integers $a_1, a_2, a_3, a_4 \in \ZZ$ such that
      $a_1^2 + a_2^2 + a_3^2 + a_4^2 = a$.
      Let $\alpha \in \End(E^4)$ be the $a$-isogeny given by the matrix \[\begin{pmatrix} a_1 & -a_2 & -a_3 & -a_4\\
    a_2 & a_1 & a_4 & -a_3\\
    a_3 & -a_4 & a_1 & a_2\\
    a_4 & a_3 & -a_2 & a_1\\        
    \end{pmatrix}.\]
    \item Compute $K:=\{(\frac{\widehat{\beta}}{n} \cdot \text{Id}_4(P), \alpha(P)): P \in E^4[N+a]\}.$ Note that $K$ can be computed even if $\frac{\beta}{n}$ is not an endomorphism: we can compute $\widehat{\beta}$ on $E[N+a]$, and by choice of $a$, $n$ is invertible mod $N+a$. 
    \item Determine if $F: E^8 \to E^8/K$ is an endomorphism of principally polarized abelian varieties. (We do so by computing an appropriate theta structure for $E^8/K$ and checking that the projective theta constant of $E^8$ is the same as the projective theta constant of $E^8/K$.) If not, then terminate and conclude that $\frac{\beta}{n}$ is not an endomorphism.
    \item Choose $M > \sqrt{\deg(\beta)} + \sqrt{n^2(N+a)}$ which is
      powersmooth. We check if
      $F_{ij}|_{E[M]} = \psi \frac{\beta}{n}|_{E[M]}$ for some
      $\psi \in \Aut(E),$ by evaluating the composition
      $E \xrightarrow{\iota_i} E^8 \xrightarrow{F} E^8
      \xrightarrow{\pi_j} E$ on $E[M].$ If for some $F_{ij}$ we have
      $F_{ij}|_{E[M]} = \psi \frac{\beta}{n}|_{E[M]}$, then we
      terminate and output TRUE. If no
      entry $F_{ij}$ satisfies $F_{ij} = \psi \frac{\beta}{n}$, then
      terminate and output FALSE.
   \end{enumerate}
\end{alg}

\begin{prop}\label{endoalg}
    Algorithm ~\ref{alg:testing} is correct and runs in time polynomial in $\log(p^k)$ and $\log(\deg(\beta))$.
\end{prop}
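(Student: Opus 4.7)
The plan is to verify correctness in both directions using Kani's Lemma and then analyze the cost of each of the six steps. For the forward direction, suppose $\beta/n \in \End(E)$, so $\deg(\beta)/n^2 = N$ is a positive integer and Step~1 does not reject. The key observation is that, when $\beta/n$ is genuine, the map $\phi = (\beta/n)\cdot \mathrm{Id}_4 : E^4 \to E^4$ is an $N$-isogeny of principally polarized abelian varieties, while $\alpha : E^4 \to E^4$ constructed in Step~3 is an $a$-isogeny, as a direct computation using $a_1^2+a_2^2+a_3^2+a_4^2 = a$ gives $\hat\alpha\alpha = a \cdot \mathrm{Id}_4$. Because $a$ is chosen with $\gcd(N, a) = \gcd(N, N') = 1$, the pair $(\phi, \alpha)$ extends to an $(N,a)$-isogeny diamond configuration; Kani's Lemma then produces an honest $(N+a)$-isogeny $F : E^8 \to E^8$ whose kernel is exactly the set $K$ constructed in Step~4 (noting that $n$ is invertible mod $N+a$, so $K$ is computable from $\widehat{\beta}$ on $E^4[N+a]$ regardless). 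This $F$ passes the polarization check in Step~5, and one of its matrix entries equals $\beta/n$ up to post-composition with an automorphism, detected in Step~6.

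For the converse, suppose the algorithm returns TRUE. Passing Step~5 certifies that $F$ is a genuine isogeny of principally polarized abelian varieties of degree $N+a$. By Proposition~\ref{degreebound}, every matrix entry $F_{ij}$ is an endomorphism of $E$ of degree at most $N+a$. In Step~6 the bound $M > \sqrt{\deg(\beta)} + \sqrt{n^2(N+a)}$ is chosen so that any two endomorphisms of $E$ with degrees bounded by $N+a$ and $N$ respectively that agree on $E[M]$ must be equal: the difference would be an isogeny whose degree is constrained by the triangle inequality for the norm form and whose kernel contains $E[M]$, forcing the difference to be $0$. Consequently, if $F_{ij}|_{E[M]} = \psi\cdot(\beta/n)|_{E[M]}$ for some automorphism $\psi$, then $\beta/n = \psi^{-1}F_{ij} \in \End(E)$.

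For the runtime, I plan to verify that each step is polynomial in $\log(p^k)$ and $\log(\deg\beta)$. Computing $\deg\beta$ via the norm form $Q$ is immediate. The choice of $a$ in Step~2 uses density estimates for powersmooth integers in short intervals coprime to $n$, following~\cite{RS86, PT18}; the four-square decomposition in Step~3 uses Rabin--Shallit~\cite{RS86}; Step~4 evaluates $\widehat{\beta}$ and $\alpha$ on $E^4[N+a]$, which is feasible because $N+a$ is powersmooth and $\beta$ is given by a basis of efficiently-evaluable endomorphisms; and Step~6 evaluates $F_{ij}$ and $\beta/n$ on $E[M]$ for a powersmooth $M$ of polynomial size. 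The main obstacle is Step~5: translating ``$F$ is an isogeny of principally polarized abelian varieties'' into an efficient finite check. The plan is to use the theta-function machinery of~\cite{Rob23b} to equip $E^8/K$ with a theta structure compatible with the product polarization and then compare projective theta constants, showing that agreement forces $K$ to come from a genuine isogeny diamond, while failure of $\beta/n$ to be an endomorphism produces either a non-principally polarized quotient or mismatched theta constants. Managing the precision and the size of the theta data while keeping all parameters polynomial will be the most technical part of the analysis.
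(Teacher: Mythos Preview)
Your plan follows the same overall architecture as the paper's proof (which is organized into Lemmas~\ref{lem:true}, \ref{lem:false}, and \ref{lem:complexity}), but there are two places where the argument as sketched would not go through without an additional idea.

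First, in the forward direction you assert that the $F$ passing Step~5 has a matrix entry equal to $\beta/n$ up to an automorphism of $E$. This is not immediate: the $F$ produced by the algorithm is the quotient map $E^8 \to E^8/K$ together with an identification $E^8/K \cong E^8$, and this identification is only determined up to a polarization-preserving automorphism of $(E^8,\lambda)$. So $F = \psi G$ where $G$ is the explicit Kani matrix and $\psi \in \Aut(E^8,\lambda)$. To conclude that the entries of $F$ are the entries of $G$ post-composed with elements of $\Aut(E)$, you need the structural fact (the paper's Lemma~\ref{autppav}) that every element of $\Aut(E^n,\lambda)$ is a ``monomial matrix'': exactly one nonzero entry per row and column, each entry in $\Aut(E)$. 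Without this, $\psi$ could in principle scramble the entries of $G$ in a way that destroys the conclusion.

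Second, in the converse direction your comparison argument is slightly circular. You want to show that if $F_{ij}|_{E[M]} = \psi(\beta/n)|_{E[M]}$ then $\beta/n \in \End(E)$, but you invoke a bound on the degree of $\beta/n$, which presupposes it is an endomorphism. The paper (Lemma~\ref{lem:false}) fixes this by first multiplying through by $n$: both $nF_{ij}$ and $\psi\beta$ are genuine endomorphisms with $\deg(nF_{ij}) \le n^2(N+a)$ and $\deg(\psi\beta) = \deg(\beta)$, and then the choice $M > \sqrt{\deg(\beta)} + \sqrt{n^2(N+a)}$ forces $nF_{ij} = \psi\beta$, whence $\beta/n = \psi^{-1}F_{ij} \in \End(E)$. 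Your runtime outline matches the paper's Lemma~\ref{lem:complexity}.
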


The proof of Proposition \ref{endoalg} follows from
Lemmas~\ref{lem:true}, \ref{lem:false}, and \ref{lem:complexity} below.

\begin{lem}\label{autppav} Let $\psi \in \Aut(E^n, \lambda)$,
  with $\lambda$ the product polarization. Suppose $\psi$ is given by its
  matrix form $M =(\psi_{i,j})_{1\leq i,j\leq n}$ as in
  Section~\ref{sec:notation}.
Then $M$ has exactly one nonzero entry in each row and each column.
  Whenever $\psi_{ij}$ is nonzero,
  $\psi_{ij}$ is an automorphism of $E$.
\end{lem}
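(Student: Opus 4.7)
The plan is to convert the statement $\psi \in \Aut(E^n,\lambda)$ into a matrix identity and then exploit the fact that the sums of degrees that appear are equal to $1$.

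First I would use the proposition stated just before Proposition~\ref{degreebound}: since $\psi$ is an automorphism of $(E^n,\lambda)$ with $\lambda$ the product polarization, $\psi$ is a $1$-isogeny, so in matrix form $\hat M \cdot M = \mathrm{Id}_n$. Because $\psi$ is invertible with inverse $\hat\psi$, the same reasoning applied to $\hat\psi$ (equivalently, multiplying $\hat M\cdot M = \mathrm{Id}_n$ on the left by $M$ and on the right by $M^{-1}$) yields $M \cdot \hat M = \mathrm{Id}_n$ as well.

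Next I would read off the diagonal entries. The $(i,i)$-entry of $\hat M \cdot M$ is $\sum_{k=1}^n \psi_{k,i}^{\dagger}\psi_{k,i}$. For elliptic curves, the Rosati involution coming from the canonical principal polarization is the dual isogeny, so $\psi_{k,i}^{\dagger}\psi_{k,i} = \deg(\psi_{k,i})\cdot\mathrm{Id}_E$ (using the convention that the zero map has degree $0$). Hence
\[
\sum_{k=1}^n \deg(\psi_{k,i}) = 1
\]
for each column index $i$. Since each $\deg(\psi_{k,i})$ is a nonnegative integer, exactly one summand equals $1$ and all others are $0$. Thus in column $i$ there is a unique nonzero entry, and that entry is an isogeny of degree $1$, i.e.\ an automorphism of $E$.

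The row statement is obtained by the same argument applied to $M \cdot \hat M = \mathrm{Id}_n$: the $(i,i)$-entry is $\sum_{k=1}^n \psi_{i,k}\psi_{i,k}^{\dagger} = \sum_k \deg(\psi_{i,k})$, which equals $1$, forcing exactly one nonzero entry per row, again an automorphism of $E$. There is no real obstacle here; the only subtlety is being careful that the Rosati involution for the product polarization on $E^n$ acts entrywise by the dual isogeny on each $\psi_{i,j}$ and transposes the matrix, which is exactly the definition of $\hat M$ given in Section~\ref{sec:notation}.
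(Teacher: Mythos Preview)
Your proof is correct and follows essentially the same approach as the paper: reduce the polarization-preserving condition to $\hat M\cdot M=\mathrm{Id}_n$ and read off that the column (resp.\ row) degree sums equal $1$. The only cosmetic difference is that the paper derives the column statement from the off-diagonal entries of the single identity $\hat M\cdot M=\mathrm{Id}_n$, whereas you invoke the companion identity $M\cdot \hat M=\mathrm{Id}_n$ directly; both are immediate.
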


\begin{proof}
  As $\psi$ preserves the polarization $\lambda$ on $E^n$,
  $\lambda = \psi^{\vee} \lambda \psi$. Therefore
  $\psi^{\dagger}\psi = 1$, with $\psi^{\dagger}$ the image
  of $\psi$ under the Rosati involution.
By \cite[Lemma 3]{Rob2023}, the matrix form of
$\psi^{\dagger}$ is given by the matrix $(\psi_{ij}^{\dagger})_{i,j}$, with $\psi^{\dagger}_{i,j}$ the
Rosati involution of $\psi_{i,j}$, which for elliptic curves equals
the dual isogeny $\widehat{\psi_{i,j}}$. Call this matrix $\hat{M}$.
Since $\psi^{\dagger}\psi = 1$, it follows that $\hat{ M} M = \text{Id}_n$.

Fix $1 \leq i \leq n$. We have $\sum_{k=1}^n
\widehat{\psi_{ik}}\psi_{ik} = \sum_{k=1}^n \deg(\psi_{ik}) = 1$. As
$\deg(\psi_{ik})$ is a positive integer whenever $\psi_{ik}$ is
nonzero, $\deg(\psi_{ik}) $ is nonzero for exactly one $k$, and for this $k$, $\deg(\psi_{ik}) = 1$. 

For $j \neq i$, we have $\sum_{k=1}^n \widehat{\psi_{ik}}\psi_{jk} =
0$. By the above argument, $\psi_{ik} = 0$ for all but one $k$. For
this $k$, the fact that
$\widehat{\psi_{ik}} \psi_{jk} = 0$ implies that $\psi_{jk} = 0$. 

This shows that there is a unique nonzero entry in the $i$-th row, and that it is the only nonzero entry in its column. As there are $n$ rows and $n$ columns, this shows that there is a unique nonzero entry in each column, which is necessarily an automorphism.
\end{proof}

\begin{lem} \label{lem:true}Let $\beta\in \End(E)$ and $n$ a positive integer. If $\frac{\beta}{n}$ is an endomorphism, then Algorithm ~\ref{alg:testing} outputs True.

\end{lem}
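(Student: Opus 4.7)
The plan is to verify step by step that if $\beta/n \in \End(E)$ then Algorithm~\ref{alg:testing} neither terminates with FALSE in Steps 1 or 5 nor fails to find a matching entry in Step 6. First I would observe that since $\beta/n \in \End(E)$, the degree $\deg(\beta/n) = \deg(\beta)/n^2$ is a non-negative integer, so $n^2 \mid \deg(\beta)$ and Step 1 sets $N = \deg(\beta)/n^2$ and proceeds. Steps 2 and 3 are algorithmic choices (a powersmooth $N+a$ coprime to $n$, a four-squares decomposition of $a$, and the matrix $\alpha$) made independently of whether $\beta/n$ is an endomorphism, so nothing can go wrong there.

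Next I would identify the isogeny produced in Step 5 with the output of Kani's Lemma. The scalar matrix $(\beta/n) I_4 : E^4 \to E^4$ is an $N$-isogeny with respect to the product polarization, and $\alpha : E^4 \to E^4$ is an $a$-isogeny by construction. Because $(\beta/n) I_4$ is scalar and the entries of $\alpha$ lie in $\ZZ \subset Z(\End(E))$, the two commute:
\[ \alpha \circ (\beta/n) I_4 \;=\; (\beta/n) I_4 \circ \alpha. \]
This displays an $(N,a)$-isogeny diamond, and applying Kani's Lemma produces the $(N+a)$-isogeny
\[ F_{\mathrm{true}} = \begin{pmatrix} (\beta/n) I_4 & \widetilde{\alpha} \\ -\alpha & (\beta/n) I_4 \end{pmatrix} : E^8 \longrightarrow E^8 \]
whose kernel equals $\{(\widehat{\beta}/n \cdot I_4(P),\, \alpha(P)) : P \in E^4[N+a]\} = K$. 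Hence $E^8/K$ is isomorphic to $E^8$ as a principally polarized abelian variety, Step 5 will succeed, and the algorithm will recover some isogeny $F : E^8 \to E^8$ realising this quotient.

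The main obstacle, and the part I would argue most carefully, is Step 6. The recovered $F$ only agrees with $F_{\mathrm{true}}$ up to post-composition with an element $\sigma \in \Aut(E^8, \lambda)$, where $\lambda$ is the product polarization, since the identification $E^8/K \xrightarrow{\sim} E^8$ is only unique up to such an automorphism. To control this ambiguity I would invoke Lemma~\ref{autppav}: the matrix form of every $\sigma \in \Aut(E^8, \lambda)$ has exactly one non-zero entry in each row and each column, and each of those entries lies in $\Aut(E)$. Consequently $\sigma \cdot F_{\mathrm{true}}$ is obtained from $F_{\mathrm{true}}$ by permuting rows and rescaling each by an automorphism of $E$, and since $(F_{\mathrm{true}})_{1,1} = \beta/n$, at least one entry of the recovered matrix must be of the form $\psi \cdot (\beta/n)$ for some $\psi \in \Aut(E)$. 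The lower bound on $M$ in Step 6 finally guarantees, via the estimate $\deg(\phi_1 - \phi_2) \leq (\sqrt{\deg \phi_1} + \sqrt{\deg \phi_2})^2$, that two isogenies $E \to E$ of the relevant degrees agreeing on $E[M]$ must be equal, so the test correctly identifies this entry and the algorithm will output TRUE.
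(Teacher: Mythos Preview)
Your proposal is correct and follows essentially the same route as the paper: set up the $(N,a)$-isogeny diamond from the commuting pair $(\beta/n)\cdot\Id_4$ and $\alpha$, apply Kani's Lemma to identify $K$ as the kernel of an explicit $(N+a)$-endomorphism of $(E^8,\lambda)$, and then invoke Lemma~\ref{autppav} to control the ambiguity $F=\sigma\cdot F_{\mathrm{true}}$ and conclude that some entry of $F$ equals $\psi\cdot(\beta/n)$. Two minor remarks: the bottom-right block of $F_{\mathrm{true}}$ should be $\widehat{(\beta/n)}\cdot\Id_4$ rather than $(\beta/n)\cdot\Id_4$ (this does not affect your argument, which only uses the $(1,1)$-entry), and the $M$-bound you cite at the end is not actually needed for this direction---once some $F_{ij}$ equals $\psi\cdot(\beta/n)$ as an endomorphism it certainly agrees on $E[M]$; the bound is what drives the converse in Lemma~\ref{lem:false}.
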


\begin{proof}
    Let $\phi = \frac{\beta}{n} \in \End(E)$. Then $\deg(\phi) = \frac{\deg(\beta)}{n^2} = N$. Since $\alpha$ is built out of scalar multiplications, we have the following commutative diagram, which is an $(N,a)$-isogeny diamond configuration.

    \[\begin{tikzcd}
    E^4 \arrow{r}{\phi \cdot \text{Id}_4} \arrow[swap]{d}{\alpha} & E^4 \arrow{d}{\alpha} \\
    E^4 \arrow{r}{\phi \cdot \text{Id}_4} & E^4
    \end{tikzcd}
    \]

    By Kani's Lemma, there is an (N+a)-endomorphism $G: (E^8, \lambda) \to (E^8, \lambda)$, where $\lambda$ is the product polarization, such that $G$ is given by the matrix $\begin{pmatrix}
        \phi \cdot \text{Id}_4 & \alpha^{\dagger}\\
        -\alpha & \widehat{\phi} \cdot \text{Id}_4\\
    \end{pmatrix}.$ Moreover, as $a$ was chosen such that $(N,a) = 1$, we can write $\ker(G) = \{\widehat{\phi} \cdot \text{Id}_4 (P), \alpha(P): P \in E^4[N+a]\},$ which is the subgroup $K$ constructed in Step 4.
    
If $F$ is an isogeny with $\ker(F) = K$, then $F$ is an $(N+a)$-endomorphism of principally polarized abelian varieties and the computed theta constants are equal. Therefore, we proceed to Step 6. 
    
    By~\cite[Proposition 1.1]{Kani97}, there is an automorphism $\psi: E^8 \to E^8$ which preserves the product polarization and such that $F = \psi G$. By Lemma~\ref{autppav} each row and each column of the matrix form of $\psi$ has exactly one nonzero entry, which is an automorphism of $E$. Thus, the entries of the matrix form of $F$ are precisely the entries of the matrix form of $G$, composed with an automorphism of $E$. In particular, four of the nonzero entries of $F$ will be given by $\psi_{ij} \phi$ for some automorphism $\psi_{ij} \in \End(E)$. 
\end{proof}

\begin{lem}
  The subgroup $K$ in Step 4 of Algorithm~\ref{alg:testing} is a
  maximally isotropic subgroup of $E^8[N+a]$ (whether or not
  $\frac{\beta}{n}$ is an endomorphism). Thus, $K$ is the kernel of an
  $(N+a)$-isogeny with respect to some polarization on $E^8$.
\end{lem}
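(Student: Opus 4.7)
The plan is to verify two properties and combine them: (i) $K$ is isotropic for the Weil pairing on $E^8[N+a]$ induced by the product polarization on $E^8$, and (ii) $|K| = (N+a)^8$. Since maximal isotropic subgroups of $E^8[N+a]$ under this pairing have order exactly $(N+a)^8$, these together give maximal isotropy, and standard theory then exhibits $K$ as the kernel of an $(N+a)$-isogeny $E^8 \to E^8/K$ with $E^8/K$ carrying an induced principal polarization.

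For isotropy, take $P, Q \in E^4[N+a]$ and write $v_P, v_Q \in K$ for the associated elements. Since the Weil pairing on $E^8$ splits as a product over the two $E^4$-factors, one has
\[
\langle v_P, v_Q\rangle \;=\; \bigl\langle (\widehat{\beta}/n)\cdot\Id_4(P),\,(\widehat{\beta}/n)\cdot\Id_4(Q)\bigr\rangle \,\cdot\, \bigl\langle \alpha(P),\alpha(Q)\bigr\rangle.
\]
On $E[N+a]$, $\widehat{\beta}/n$ is interpreted as $m\widehat{\beta}$ where $m\equiv n^{-1}\pmod{N+a}$, which is an honest endomorphism of $E$. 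Applying the adjoint property of the Weil pairing together with $\beta\widehat{\beta} = \deg(\beta) = n^2 N$, the first factor collapses to $\langle P,Q\rangle^N$. Likewise $\alpha^{\dagger}\alpha = a\cdot\Id_4$ yields $\langle\alpha(P),\alpha(Q)\rangle = \langle P,Q\rangle^a$, so the product is $\langle P,Q\rangle^{N+a} = 1$ because $P,Q$ are $(N+a)$-torsion.

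For the size, consider the map $\Psi: E^4[N+a] \to E^8[N+a]$ given by $\Psi(P) = ((\widehat{\beta}/n)\cdot\Id_4(P), \alpha(P))$, whose image is $K$. If $\Psi(P)=0$ then $\alpha(P)=0$, and applying $\alpha^{\dagger}$ gives $aP = \alpha^{\dagger}\alpha(P) = 0$; together with $(N+a)P=0$ this forces $P \in E^4[\gcd(a, N+a)]$. Choosing $a$ coprime to $N+a$ in Step 2 (which costs nothing beyond the already-imposed conditions) makes this group trivial, so $\Psi$ is injective and $|K| = |E^4[N+a]| = (N+a)^8$.

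The main subtlety is that $\widehat{\beta}/n$ is not literally an endomorphism when $\beta/n$ is not; the adjoint manipulation must proceed through the identification with $m\widehat{\beta}$ on $E[N+a]$, which is the only reason the pairing calculation still works in the ``whether or not'' statement of the lemma. Once this identification is in place the argument is entirely formal, driven by the three identities $\beta\widehat{\beta} = n^2 N$, $\alpha^{\dagger}\alpha = a\cdot\Id_4$, and $N + a = N'$. The only additional care needed is to ensure $\gcd(a, N+a) = 1$ in the choice of $a$, which is a minor strengthening of Step 2 and guarantees the injectivity of $\alpha$ on $E^4[N+a]$.
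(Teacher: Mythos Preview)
Your argument is correct and takes a genuinely different, more direct route than the paper's. The paper does not compute the Weil pairing on $K$ directly; instead it rescales the diamond, replacing $\beta$ by $m\beta$ and $\alpha$ by $mn\alpha$ (with $m\equiv n^{-1}\pmod{N+a}$) so that Kani's Lemma applies to honest endomorphisms, yielding a maximal isotropic $K'\subset E^8[m^2n^2(N+a)]$. It then intersects with $E^8[N+a]$, uses compatibility of the Weil pairing to show $K'\cap E^8[N+a]$ is maximal isotropic, and finally identifies this intersection with $K$. Your approach bypasses the auxiliary diamond entirely: the single identity $\langle v_P,v_Q\rangle = \langle P,Q\rangle^{m^2n^2N}\cdot\langle P,Q\rangle^{a} = \langle P,Q\rangle^{N+a}=1$ and the injectivity of $\Psi$ via $\alpha^{\dagger}\alpha=a\cdot\Id_4$ finish the job in two lines. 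What your approach buys is brevity and transparency; what the paper's approach buys is that maximal isotropy of $K'$ comes for free from Kani's Lemma (already a central tool), so no pairing computation is needed at all.

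One small remark: your hypothesis $\gcd(a,N+a)=1$ (equivalently $\gcd(N,N+a)=1$) is indeed not stated in Step~2 as written, but the paper's own proof also requires it (it asserts $\deg(\beta)=n^2N$ and $\deg(\alpha)$ are coprime to $N+a$), and the complexity analysis later confirms that $N+a$ is in fact chosen coprime to $Nn$. So your ``minor strengthening'' is already implicitly in force and is not a gap specific to your argument.
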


\begin{proof}
  Let $K$ denote the subgroup in Step 4 of
  Algorithm~\ref{alg:testing}, which is precisely the image of
  $F^{\dagger} = \begin{pmatrix}
    \frac{1}{n}\widehat{\beta} \cdot \Id_4 & -\alpha^{\dagger}\\
    \alpha & \frac{1}{n}\beta \cdot \Id_4\\
    \end{pmatrix}$ on $(E^4 \times E^4)[N+a]$

    Let $m \in \ZZ$ such that $mn \equiv 1 \pmod{N+a}$. Consider the following isogeny factorization configuration:

    \[\begin{tikzcd}
    E^4 \arrow{r}{m\beta \cdot \text{Id}_4} \arrow[swap]{d}{mn\alpha} & E^4 \arrow{d}{mn\alpha} \\
    E^4 \arrow{r}{m\beta \cdot \text{Id}_4} & E^4
    \end{tikzcd}
    \]

    By Kani's Lemma, there is an $m^2n^2(N +a)$-endomorphism of $E^8$
    with respect to the product polarization, given by
    $F' = \begin{pmatrix}
      m\beta \cdot \Id_4 & mn\alpha^{\dagger}\\
      -mn\alpha & m\widehat{\beta}\cdot \Id_4\\
    \end{pmatrix}$ and with kernel equal to the image of $F'^{\dagger} = \begin{pmatrix}
        m\widehat{\beta}\cdot \Id_4 & -mn\alpha^{\dagger}\\
        mn\alpha & m\beta \cdot \Id_4
    \end{pmatrix}$ on $(E^{4}\times E^4)[m^2n^2(N+a)].$ Let $K' = F'^{\dagger}(E^4 \times E^4)[m^2n^2(N+a)].$ By Kani's Lemma, $K'$ is a maximal isotropic subgroup of $E^8[m^2n^2(N+a)]$. 

    First, $K' \cap E^{8}[N+a]$ is a maximal isotropic subgroup of
    $E^8[N+a].$ Let $e_{m^2n^2(N+a)}$ be the Weil pairing on
    $E^8[m^2n^2(N+a)]$ and $P,Q \in E^{8}[N+a] \cap K'$.
By compatibility of
    the Weil pairing,
    $1 = e_{m^2n^2(N+a)}(P,Q) = e_{N+a}(mnP, mnQ)$. By choice of $m$, we have
    $e_{N+a}(mnP, mnQ) = e_{N+a}(P,Q).$ Thus, $K' \cap E^8[N+a]$ is an
    isotropic subgroup of $E^8[N+a]$.  Since $K'$ is a maximal
    isotropic subgroup of $E^{8}[m^2n^2(N+a)]$, and $(m^2n^2, N+a)=1$,
    we have $K' \cap E^8[N+a]$ has order $(N+a)^8$ and is therefore a
    maximal isotropic subgroup of $E^8[N+a]$.

    Finally, we have $K = K' \cap E^{8}[N+a]$. It is clear that $K \subset K' \cap E^{8}[N+a]$, since $F^{\dagger} = F'^{\dagger}$ on $E^{8}[N+a]$. Moreover, by the description of $K$ as $\{(\frac{\widehat{\beta}}{n} \cdot \text{Id}_4(P), \alpha(P)): P \in E^4[N+a]\}$, where $\beta$ and $\alpha$ have degrees coprime to $N+a$, it is clear that the order of $\#K = (N+a)^8 = \#(K' \cap E^8[N+a]).$ Thus, $K$ is a maximal isotropic subgroup of $E^8[N+a]$.

By~\cite[Proposition 1.1]{Kani97}, $K$ is therefore the kernel of an $N+a$-isogeny with respect to some polarization.
    \end{proof}

The following lemma shows that an endomorphism is uniquely determined by its degree and its action on $M$-torsion, for suitably large $M$ (depending on the degree).

\begin{lem}\label{lem:equality} Let $E$ be an elliptic curve and $\phi, \psi \in \End(E).$ Let $M > \sqrt{\deg(\phi)} + \sqrt{\deg(\psi)}.$ If $\psi|_{E[M]} = \phi|_{E[M]}$, then $\psi = \phi$.
\end{lem}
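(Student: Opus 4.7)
The plan is to apply a degree-bound argument to the difference $\phi - \psi$. The key observation is that the degree function on $\End(E)$ is a positive-definite quadratic form (the reduced norm), so the associated ``norm'' $\sqrt{\deg(\cdot)}$ satisfies the triangle inequality. Hence
\[
\sqrt{\deg(\phi - \psi)} \le \sqrt{\deg(\phi)} + \sqrt{\deg(\psi)} < M,
\]
so $\deg(\phi - \psi) < M^2$.

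Next, I would use the hypothesis $\phi|_{E[M]} = \psi|_{E[M]}$ to see that $\phi - \psi$ vanishes on $E[M]$, i.e.\ $E[M] \subseteq \ker(\phi - \psi)$. Since $E[M]$ is the kernel of the multiplication-by-$M$ isogeny $[M] \colon E \to E$, the universal property of quotients by finite subgroup schemes gives a factorization $\phi - \psi = [M] \circ \eta$ for some $\eta \in \End(E)$.

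Taking degrees, $\deg(\phi - \psi) = M^2 \cdot \deg(\eta)$. Combined with $\deg(\phi - \psi) < M^2$, this forces $\deg(\eta) < 1$, hence $\deg(\eta) = 0$, hence $\eta = 0$ (the zero map is the only endomorphism of degree $0$). Therefore $\phi - \psi = 0$, i.e.\ $\phi = \psi$.

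The only nontrivial step is justifying the triangle inequality for $\sqrt{\deg(\cdot)}$, but this is standard: under the embedding $\End(E) \hookrightarrow \End(E) \otimes \RR$, the degree extends to the positive-definite quadratic form coming from the Rosati involution (or equivalently the reduced norm on the quaternion/imaginary-quadratic order), so $\sqrt{\deg}$ is a genuine norm and satisfies the triangle inequality. I do not anticipate any obstacles beyond invoking this standard fact.
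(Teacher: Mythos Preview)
Your proof is correct and follows essentially the same approach as the paper's: factor $\phi - \psi$ through $[M]$ using the kernel containment, then compare degrees against the bound $(\sqrt{\deg\phi} + \sqrt{\deg\psi})^2 < M^2$. The only cosmetic differences are that the paper phrases it as a proof by contradiction (assuming $\phi - \psi \neq 0$ and deriving $M^2 \leq \deg(\phi-\psi) < M^2$) and cites the degree bound as ``Cauchy--Schwarz'' rather than the triangle inequality for $\sqrt{\deg}$, but these are the same inequality.
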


\begin{proof} For contradiction, assume the hypotheses of the lemma
  and that $\phi - \psi$ is nonzero.  Since
  $\psi|_{E[M]} = \phi|_{E[M]}$, $E[M] \subset \ker(\phi - \psi).$
  Since $\phi - \psi$ is nonzero, we must have $\phi - \psi= M \gamma$
  for some nonzero $\gamma \in \End(E)$. Thus,
  $\deg(\phi - \psi) = M^2 \deg(\gamma).$
By the Cauchy-Schwartz inequality, $\deg(\phi - \psi) \leq (\sqrt{\deg(\phi)} + \sqrt{\deg(\psi)})^2$.
Hence $M^2 \leq M^2 \deg(\gamma) \leq (\sqrt{\deg(\phi)} +
\sqrt{\deg(\psi)})^2$, which is a contradiction.
\end{proof}

\begin{lem} \label{lem:false}If $\frac{\beta}{n}$ is not an endomorphism, Algorithm ~\ref{alg:testing} outputs False.

\end{lem}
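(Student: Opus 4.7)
The plan is to argue by contrapositive: I will show that if Algorithm~\ref{alg:testing} outputs TRUE, then $\frac{\beta}{n}$ is already an endomorphism. A quick inspection of the algorithm shows that the only place where TRUE can be returned is Step~6, and this happens precisely when some matrix entry $F_{ij}$ of the $(N+a)$-isogeny $F$ constructed in Step~5 satisfies $F_{ij}|_{E[M]} = \psi \cdot \frac{\beta}{n}|_{E[M]}$ for some $\psi \in \Aut(E)$. Here $\frac{\beta}{n}|_{E[M]}$ is interpreted as $(n^{-1} \bmod M)\,\beta|_{E[M]}$, which is meaningful because $M$ can be (and is) chosen powersmooth with $\gcd(M,n)=1$, exactly as the analogous coprimality is arranged for $N' = N+a$ in Step~2.

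Next I would clear denominators to reduce the situation to an identity between genuine endomorphisms. The equality on $E[M]$ rearranges to $n F_{ij}|_{E[M]} = \psi \beta|_{E[M]}$, where both sides are honest endomorphisms of $E$. To promote this pointwise equality on $E[M]$ to a global equality, I would invoke Lemma~\ref{lem:equality}. The relevant degrees are $\deg(\psi\beta) = \deg(\beta)$ and $\deg(nF_{ij}) = n^2\deg(F_{ij}) \le n^2(N+a)$, where the upper bound on $\deg(F_{ij})$ is exactly Proposition~\ref{degreebound} applied to the $(N+a)$-isogeny $F: E^8 \to E^8$. The algorithm's choice $M > \sqrt{\deg(\beta)} + \sqrt{n^2(N+a)}$ is therefore engineered precisely so that $M > \sqrt{\deg(\psi\beta)} + \sqrt{\deg(nF_{ij})}$, meeting the hypothesis of Lemma~\ref{lem:equality}.

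Applying Lemma~\ref{lem:equality}, I conclude $n F_{ij} = \psi \beta$ in $\End(E)$, whence $\frac{\beta}{n} = \psi^{-1} F_{ij}$ is a composition of endomorphisms of $E$ and therefore an endomorphism. This establishes the contrapositive: if $\frac{\beta}{n} \notin \End(E)$, then the algorithm cannot reach a TRUE output and so must return FALSE, whether at Step~1 (the divisibility test $n^2 \mid \deg(\beta)$), at Step~5 (the theta-constant check deciding whether $F$ descends to an endomorphism of principally polarized abelian varieties), or at the end of Step~6 (no matching $F_{ij}$ is found). The only real subtlety is handling $\frac{\beta}{n}$ formally until denominators have been cleared; once the identity is rewritten as $n F_{ij} = \psi \beta$, the problem collapses to the degree-based uniqueness encoded in Lemma~\ref{lem:equality}, and the quantitative choice of $M$ in Step~6 is precisely what makes that lemma applicable.
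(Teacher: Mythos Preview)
Your proof is correct and follows essentially the same approach as the paper's: you argue the contrapositive, clear denominators to obtain $nF_{ij}|_{E[M]} = \psi\beta|_{E[M]}$, bound $\deg(F_{ij}) \le N+a$ via Proposition~\ref{degreebound}, and invoke Lemma~\ref{lem:equality} with the algorithm's choice of $M$ to conclude $\frac{\beta}{n} = \psi^{-1}F_{ij} \in \End(E)$. Your explicit remark that $\frac{\beta}{n}|_{E[M]}$ should be interpreted via $\gcd(M,n)=1$ is a helpful clarification the paper leaves implicit.
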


\begin{proof}
  Assume $F: E^8 \to E^8$ respects the product polarization and has
  kernel $K$ as defined in Step 4. Let $F_{ij}$ be an entry in the
  matrix form of $F$. Then $\deg(F_{ij}) \leq (N + a)$. If
  $F_{ij}|_{E[M]} = \frac{\psi\beta}{n}|_{E[M]}$ for some
  $M > \sqrt{\deg(\beta)} + \sqrt{n^2(N+a)}$ and an automorphism
  $\psi$, then $n F_{ij}|_{E[M]} = \psi \beta|_{E[M]}$. As we know
  $\psi \beta, n F_{ij}$ are endomorphisms, and
  $M > \sqrt{\deg(\beta)} + \sqrt{n^2(N+a)} > \sqrt{\deg(\psi \beta)}
  + \sqrt{n \deg(F_{ij})}$, Lemma~\ref{lem:equality} implies that
  $\frac{\beta}{n} = \psi^{-1} F_{ij} \in \End(E)$.
    \end{proof}

\begin{lem} \label{lem:complexity} Algorithm~\ref{alg:testing} runs in time polynomial in $\log(p^k)$ and $\log(\deg(\beta))$.
\end{lem}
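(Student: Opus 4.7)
The plan is to analyze each step of Algorithm~\ref{alg:testing} in turn and verify that each runs in time polynomial in $\log(p^k)$ and $\log(\deg(\beta))$, so that the total cost is polynomial as well.

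Step~1 amounts to evaluating the integral quadratic form $Q$ at $(b_1,b_2,b_3,b_4)$ and testing divisibility by $n^2$; both operations are polynomial in the bit size of the inputs. For Step~2 we must locate $a$ of polynomial size such that $N + a$ is $B$-powersmooth for $B$ polynomial in $\log(\deg(\beta))$ and coprime to $n$; the existence and efficient construction of such $a$ follow from standard density estimates for powersmooth integers in short intervals, together with a scan over the polynomially-many candidates. Step~3 is the classical four-squares problem for $a$, which admits a polynomial-time algorithm (Rabin--Shallit, or the deterministic refinement of Pollack--Trevi\~no~\cite{PT18}).

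For Step~4, the kernel $K$ is the image of the explicit block matrix on $E^4[N+a] \times E^4[N+a]$. Because $N + a$ is $B$-powersmooth, the $(N+a)$-torsion of $E$ is defined over an extension of $\FF_{p^k}$ of degree polynomial in $B$, hence polynomial in $\log(\deg(\beta))$; point arithmetic in this extension is polynomial. The action of $\widehat{\beta}/n$ modulo $N+a$ is well-defined since $\gcd(n,N+a)=1$, and both $\widehat{\beta}$ and $\alpha$ can be evaluated on this torsion using the hypothesis that the generators of $\End(E)$ from the input are efficiently evaluable at powersmooth torsion points. Step~6 is similar: one evaluates each of the $64$ entries $F_{ij}$ on $E[M]$ for a powersmooth $M$ of size $O\bigl(\sqrt{\deg(\beta)} + n\sqrt{N+a}\,\bigr)$, and compares to $\psi \beta/n$ for each of the at most $24$ possible automorphisms $\psi$ of $E$; the relevant torsion is again defined over an extension of polynomial degree and evaluation is polynomial.

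The main obstacle is Step~5, where we must construct the quotient $E^8/K$ equipped with a theta structure and check whether its projective theta constants agree with those of $(E^8,\lambda)$. This is handled by the theta-coordinate framework developed in~\cite{Rob23b, Rob2023}: because $N + a$ is powersmooth, the $(N+a)$-isogeny with kernel $K$ factors as a composition of polynomially many isogenies of prime-power degree bounded by $B$, and each elementary step (computation of the codomain, theta structure, and comparison of projective theta constants) has cost polynomial in $B$ and in the base-field size. Summing the polynomial contributions of Steps~1--6 gives a total runtime polynomial in $\log(p^k)$ and $\log(\deg(\beta))$, which is exactly the claim.
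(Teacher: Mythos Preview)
Your outline tracks the paper's proof closely: both step through Algorithm~\ref{alg:testing}, bound the cost of the arithmetic on powersmooth torsion, and invoke the theta-coordinate machinery for the quotient and comparison in Step~5 (the paper cites~\cite{DLRW23} rather than~\cite{Rob23b,Rob2023} for the detailed operation counts there, but the substance is the same).

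There is, however, a real problem in your treatment of Step~2. You write that a suitable $a$ exists and can be found by ``standard density estimates for powersmooth integers in short intervals, together with a scan over the polynomially-many candidates.'' This does not work. With smoothness bound $B$ polynomial in $\log(\deg\beta)$, the density of $B$-powersmooth integers near $N$ is governed by $\rho(u)$ with $u \approx \log N / \log\log N$, which is super-polynomially small in $\log N$; no interval of length polynomial in $\log N$ is guaranteed to contain one, and scanning an interval long enough to guarantee a hit is not polynomial-time. Moreover, verifying powersmoothness of a random candidate would require factoring it. The paper avoids this entirely by \emph{constructing} $N+a$ directly: take the product of the first $k$ primes coprime to $Nn$ until the product exceeds $N$. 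By~\cite{RS62} this gives $B=\tilde O(\log(\deg\beta))$ and $N+a=\tilde O(\deg\beta)$, with no search and no factoring. You should replace the density-plus-scan argument by this explicit construction (and do the analogous thing for $M$ in Step~6).
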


\begin{proof}
  Let $B$ be a powersmoothness bound for $N+a$ (as in Step 2), and let
  $C$ be a powersmoothness bound for $M$ (as in Step 6). Given $Q$,
  computing the degree $\deg(\beta)$ amounts to evaluating $Q$ at
  $(b_1, b_2, b_3, b_4)$. The complexity of computing
  $a_1, a_2, a_3, a_4$ is
  $O((\log(a))^2 (\log\log(a))^{-1})$, see \cite{RS86, PT18}.

Computing a basis for $K$ means first computing a basis for $E[N+a]$; decomposing into at most $\log(N+a)$ prime power parts, this can be done in $O(B^2 \log(p^k)^2\log(N+a))$ operations \cite[Lemma 7]{Rob2023}. Evaluating $\widehat \beta$ on a basis for $E[N+a]$ and $\alpha$ on the induced basis for $E^4[N+a]$ can be done efficiently by our assumption on $\beta$ and powersmoothness of $N+a$.

For Step 5, we need to check that $F$ is truly an endomorphism. We
place the additional data of a symmetric theta structure of level 2 on
$E^8$, by taking an appropriate symplectic basis of $E[4]$ if $N+a$ is
odd, or $E[2^{m+2}]$ where $2^{m}$ is the largest power of 2 dividing
$N+a$ otherwise. (See Proposition C.2.6 of~\cite{DLRW23} and the
preceding remark about how to choose a basis which is compatible with
$K$ in different cases.) Decomposing $K$ into prime components, we can compute the theta null point of
$E^8/K$ with the induced theta structure in
$O(\ell_{N+a}^{8}\log(N+a))$ operations, where $\ell_{N+a}$ is the
largest prime dividing $N+a$. (See Theorem C.2.2 and Theorem C.2.5 of
~\cite{DLRW23}.) Finally, as $F$ may not preserve the product theta
structure even if it is the desired endomorphism, we need to act on
the theta null point by a polarization-preserving matrix in order to
directly compare theta null points. When $N+a$ is odd, this matrix is
computed explicitly ~\cite[Proposition C.2.4]{DLRW23} from the action
of $F$ on $E[4]$, which can also be evaluated in
$O(\ell_{N+a}^8 \log(N+a))$ operations. This gives $O(B^8 \log(N+a))$
operations for this step.

In Step 6, computing a basis for the prime-power parts of $E[M]$ takes $O(C^2 \log(p^k)^2\log(M))$ operations. If $F$ is an endomorphism, then having already computed theta coordinates for $E^8$ and $E^8/K$ in the previous step, we can evaluate $F$ in terms of theta coordinates ~\cite[Theorem C.2.2, Theorem C.2.5]{DLRW23} and translate back to Weierstrass coordinates to check the equality. Note that there are only finitely many, and usually two, automorphisms to consider. Each evaluation costs $O(\ell_{N+a}^8 \log(N+a))$ operations where $\ell_{N+a}$ is the largest prime dividing $N+a$. There are 64 entries $F_{ij}$ to check, by checking the equality on at most $2 \log(M)$ points. Thus, this step requires at most $O(C^2 \log(p^k)^2 \log(M) + B^8 \log(N+a) \log(M))$ operations.

Now, we show that $B$ and $C$ can be taken of size $O(\log(\deg(\beta)))$ and that $N+a$ and $M$ are polynomial in $\deg(\beta)$.

Let $\tilde{M}:= \sqrt{\deg(\beta)} +\sqrt{n^2(N+a)}$. By~\cite{RS62},
Theorem 4 and its Corollary, there exist constants $c,c'$ such that
\[
  k<\log\prod_{p_i \leq ck} p_i < c\cdot c' k.
\]

Multiply successive primes until their product is bigger than $\tilde
M$.
By the above relation, it is enough to choose the 
primes $p_i \leq c \cdot \log(\tilde M)$ primes, then 
$\log{\tilde M}< \log\prod_{p_i \leq ck} p_i$, or $\prod_{p_i \leq  ck}p_i>\tilde M$. 
Let $M:=\prod_{p_i  \leq  ck}p_i$ with $k=\log(\tilde M)$. Then
\[
  \log M = \log\prod_{p_i \leq c \log(\tilde M)} p_i < c\cdot c' \log \tilde M,
  \]
so $\log M = O(\log(\tilde M))$, and by construction, $M$ is $O(\log \tilde M)$-smooth.

Multiply successive primes which are coprime to $pNn$ until their product is bigger than $N$.   
If $p > c \log(N^2 n)$, then it is enough to choose primes $p_i \leq c \cdot \log(N^2 n)$ which are coprime to $pNn$. 
We have
\[
  \log(N^2 n) <\log\prod_{p_i \leq c\log(N^2 n)} p_i,
\] so $N^2 n < \prod_{p_i \leq c \cdot \log(N^2n)} p_i$.
Since $(\prod_{p_i \mid Nn} p_i) \leq Nn$, we get
\[\prod_{p_i \leq c \cdot \log(N^2n), (p_i, Nn) = 1} p_i \geq \frac{N^2 n}{N n} = N\]
and $N+a$ is $O(\log(N^2 n))$-powersmooth. 
Since $Nn^2 = \deg(\beta)$, this shows $N+a$ is $O(\log(\deg(\beta)))$-powersmooth.

A similar argument works for $p < c\log(N^2n)$; in cryptographic applications, this case will never occur.

Now we bound $N+a$ in terms of $\deg(\beta)$. 
\[
 \log(N+a) < \log\prod_{p_i \leq c \cdot \log(N^2n)} p_i < c' c \cdot \log(N^2 n) \leq c' c \log(\deg(\beta)^2),
  \]
so \[N+a < e^{cc'\log((\deg(\beta))^2)} = \deg(\beta)^{2cc'}.
\]

Then a bound for $\tilde{M}$ is given by $\tilde{M} \leq \sqrt{\deg(\beta)} + \sqrt{\deg(\beta)(\deg(\beta))^{2cc'}}$. This shows that $M$, which is $O(\log(\tilde M))$-smooth, is $O(\log(\deg(\beta)))$-smooth.
\end{proof}

One can get speedups by replacing $E^8$ by $E^4$ and tweaking parameters as discussed by Robert in~\cite[Section 6]{Rob2023}; for simplicity and for a proven complexity we don't go into those details here. 

\section{An Explicit Isomorphism with the Matrix Ring}               
In this section, we give more details of Propositions~\ref{Oq} and ~\ref{iso}, which are consequences of~\cite{Voight13}. 

First, we restate and prove Proposition~\ref{Oq}.

\begin{prop}\label{app:Oq} Suppose an order $\mathcal{O}_0 \subset \End(E)$ is given by a basis, a multiplication table, and $Q$ the norm form on $\OO_0$, and let $q$ be a prime. Then there is an algorithm which computes a $q$-maximal $q$-enlargement $\mathcal{O}_q$ of $\mathcal{O}_0$.
The run time is polynomial in the size of the basis and multiplication table. The basis elements which are output are of the form $\frac{\beta}{q^k}$, for an endomorphism $\beta \in \mathcal{O}_0$ and $k \leq e = v_q(\discrd(\mathcal{O}_0))$. Furthermore, $\deg(\beta)$ is polynomial in the degrees of basis elements of $\mathcal{O}_0$.\end{prop}

\begin{proof}  On input $\mathcal{O}_0$, specified by the multiplication table and
  $Q$, we compute a $q$-maximal $q$-enlargement of $\mathcal{O}_0$, denoted $\OO_q$ 
  \cite[Algorithms 3.12, 7.9, 7.10]{Voight13}. More specifically,
  Algorithm 3.12 produces a basis for $\mathcal{O}_0 \otimes \ZZ_q$
  such that the norm form is normalized. Algorithm 7.9 gives a basis
  for a potentially larger ``$q$-saturated" order, whose elements are
  of the form $\frac{x}{q^k}$. Here, $x$ has coefficients in terms of
  the original basis at most
  $\max (\Trd(\beta_i \hat{\beta}_j))^4$, where $\beta_i$ and
  $\beta_j$ range over basis elements of the original basis. The power
  $k$ in the denominator is at most $\lfloor j/2 \rfloor$ where $j$ is
  the valuation of the atomic form corresponding to the basis element,
  and hence $k \leq e = v_q(\discrd(\mathcal{O}_0))$.

Since $|\Trd(\beta_i \hat\beta_j)| \leq 2\sqrt{\deg(\beta_i) \deg(\beta_j)}$, the coefficients are polynomial in the original basis degrees. Applying Algorithm 7.10 of~\cite{Voight13} adjoins a zero divisor mod $q$, which is of the form $\frac{x}{q}$; here, $x$ is expressed as linear combinations of the original basis with polynomially-sized coefficients. Thus, the basis which is output for $\OO_q$ has coefficients which are polynomially-sized in the degrees of the original basis elements and $q$. Therefore, a basis element $\frac{\beta}{q^k}$ satisfies $\deg(\beta)$ is at most polynomially-sized in $\deg(\beta_i)$, where $\beta_i$ ranges over the original basis elements, and $q$.
\end{proof}

We break up the proof of Proposition~\ref{iso} into two steps. First, we compute a zero divisor $x$ mod $q^{r+1}$.

\begin{prop}\label{zerodiv} Given a basis and multiplication table for a $q$-maximal order $\OO_q$, and an integer $r$, there is an algorithm which computes a zero divisor $x \in \OO_q \otimes \ZZ_q$ mod $q^{r+1}$. In other words, there is an algorithm to compute an element $x \in \OO_q \otimes \ZZ_{(q)}$ such that there exists a zero divisor $x' \in \OO_q \otimes \ZZ_q$ with $v_q(x - x') \geq r+1$. The element $x$ is expressed as a linear combination of the given basis such that coefficients are polynomially-sized in $q^{r+1}$ and $\deg(\beta_i)\deg(\beta_j)$, where $\beta_i$ and $\beta_j$ range over elements of the given basis. The runtime is polynomial in $\log(q^{r+1})$ and the size of $\OO_q$.\end{prop}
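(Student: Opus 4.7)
The plan is to reduce the problem to finding an isotropic vector of the reduced norm form of $\tilde{\mathcal{O}}$ modulo $q^{r+1}$, then to Hensel-lift. Using the given multiplication table, one first computes the reduced norm form $Q$ of $\tilde{\mathcal{O}}$ as an integral quaternary quadratic form in the given basis; its coefficients are bounded by the pairwise reduced traces $\Trd(\beta_i \hat{\beta}_j)$, and hence polynomially by $\deg(\beta_i)\deg(\beta_j)$. Since $\tilde{\mathcal{O}}$ is $q$-maximal in a quaternion algebra split at $q$, the reduction $Q \bmod q$ is isometric to the norm form on $M_2(\FF_q)$, which is a nondegenerate isotropic quadratic form.

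First I would normalize $Q$ over $\ZZ_q$ using Voight's Algorithm~3.12 of~\cite{Voight13}, and then apply a deterministic algorithm for finding nontrivial zeros of quadratic forms over finite fields---such as van de Woestijne's~\cite{W5}---to produce $v_0 \in \ZZ^4$ with $v_0 \not\equiv 0$, $Q(v_0) \equiv 0 \pmod q$, and $\nabla Q(v_0) \not\equiv 0 \pmod q$. The final (Jacobian) condition is the smoothness requirement for Hensel lifting; one can always arrange it by choosing $v_0$ to correspond to a rank-one (rather than zero) matrix under $\tilde{\mathcal{O}} \otimes \FF_q \cong M_2(\FF_q)$, which exists since the isotropic cone in $M_2(\FF_q)$ contains such elements.

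Next I would Hensel-lift: given $v_k \in \ZZ^4$ with $Q(v_k) \equiv 0 \pmod{q^{k+1}}$, set $v_{k+1} := v_k + q^{k+1} w$ and solve the linear congruence $\nabla Q(v_k) \cdot w \equiv -Q(v_k)/q^{k+1} \pmod q$, which has a solution because the Jacobian condition persists. Iterating $r$ times produces $v := v_r$ with $Q(v) \equiv 0 \pmod{q^{r+1}}$; the output is $x := \sum_i v_i \beta_i$. Because the Hensel sequence converges over $\ZZ_q$ to a true nontrivial solution $v' \in \ZZ_q^4$, the element $x' := \sum_i v'_i \beta_i$ is a genuine zero divisor of $\tilde{\mathcal{O}} \otimes \ZZ_q$ with $v_q(x - x') \geq r+1$. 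Reducing the $v_i$ modulo $q^{r+1}$ at each step keeps coefficients polynomial in $q^{r+1}$, while the intermediate arithmetic is controlled by the entries of $Q$, which gives the claimed polynomial bound in $\deg(\beta_i)\deg(\beta_j)$.

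The main obstacle is the case $q = 2$, where $\nabla Q$ lies automatically in $2 \ZZ_2$ and the naive Hensel step fails. This is handled exactly as in Algorithm~4.3 of~\cite{Voight13}: one works to higher precision $2^{r+2}$ (or more) and applies the refined quadratic Hensel lemma, which adjusts the constants but preserves the polynomial runtime and coefficient bounds. Outside of this case, the procedure is a sequence of linear-algebra operations modulo $q$, whose overall complexity is polynomial in $\log(q^{r+1})$ and the size of the input.
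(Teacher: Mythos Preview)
Your argument for odd $q$ is essentially the paper's: normalize via \cite[Algorithm~3.12]{Voight13}, find a nontrivial zero of the diagonal form over $\FF_q$ using \cite{W5}, then Hensel-lift. The only cosmetic difference is that you lift along the full gradient $\nabla Q$, whereas the paper freezes all but one coordinate and lifts a univariate polynomial; these are equivalent, and your observation that any nonzero isotropic vector in $M_2(\FF_q)$ has rank one (hence nonvanishing gradient) is a clean way to secure the smoothness hypothesis.

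The gap is at $q=2$. Your premise that ``$\nabla Q$ lies automatically in $2\ZZ_2$'' is false once you have already normalized: over $\ZZ_2$ the normalized form is \emph{not} diagonal but a sum $A_{1,2}(x_1,x_2)+A_{3,4}(x_3,x_4)$ of type-(ii) atomic blocks, and $2$-maximality forces the cross coefficients $a_{1,2},a_{3,4}$ to be units in $\ZZ_2$. Consequently $\partial Q/\partial x_1 = 2a_1 x_1 + a_{1,2}x_2$ can be a unit, and the paper exploits exactly this: it writes down an explicit mod-$2$ solution with a carefully chosen odd coordinate so that the relevant univariate derivative is a unit, and then applies the \emph{ordinary} Hensel lemma. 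No refined or higher-precision Hensel step is needed. Your citation of \cite[Algorithm~4.3]{Voight13} is also off: that algorithm takes a zero divisor as input and builds the matrix-ring isomorphism; it does not perform any $2$-adic lifting of quadratic forms. So for $q=2$ you should drop the ``gradient is automatically even'' claim and instead use the odd cross terms of the normalized form to run standard Hensel, as the paper does.
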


\begin{proof} 
First, use~\cite[Algorithm 3.12]{Voight13} on $\OO_q
\otimes \ZZ_q$ to obtain a normalized basis $\{f_1, f_2, f_3, f_4\}$
for $\OO_q \otimes \ZZ_q$. By clearing denominators by units in $\ZZ_q$ if necessary, we can ensure $f_i \in \mathcal{O}_0 \otimes \ZZ_{(q)}$.

As $\OO_q$ is $q$-maximal, the output basis being normalized means that the reduced norm form $Q(x_1, x_2, x_3, x_4) = \Nrd(\sum_{i=1}^4 x_i f_i)$ is given by a sum of atomic forms. 

When $q$ is odd, this means that $Q(x_1, x_2, x_3, x_4) = \sum_{i=1}^4 a_i x_i^2$ where $a_i \in (\ZZ_q)^\times$ and $\Trd(f_i \hat{f_j}) = 0$ when $i \neq j$. When $q=2$, atomic forms are of one of the two following types: (i) $ax^2$ for $a \in (\ZZ_q)^{\times}$ or (ii) $a_ix_i^{2} + a_{ij}x_ix_j + a_{j}x_i^2$ such that $v_2(a_{ij}) \leq v_2(a_i) \leq v_2(a_j)$ and $v_2(a_i)v_2(a_{ij}) = 0$. Up to reordering basis elements if necessary, we may therefore write $Q(x_1, x_2, x_3, x_4) = A_{12}(x_1, x_2) + A_{34}(x_3, x_4)$, where $A_{ij}$ is either atomic of type (ii) or a sum of atomic forms of type (i).

We split up rest of the proof into the case that $q$ is odd and $q=2$: We first produce a nonzero element $x \in (\ZZ/q\ZZ)^4$ such that $Q(x) \equiv 0 \pmod{q}$. Then, we show that there exists a lift $x'$ in $\OO_q \otimes \ZZ_q$, and we compute and output a lift of $x$ in $\OO_q \otimes \QQ$ up to our desired precision $q^r$. In each case, the coefficients (in terms of the $f_i$) $x_1, x_2, x_3, x_4$ will be chosen mod $q^r$, so the resulting output coefficients (in terms of the input basis) is polynomially-sized in $q^r$ and $\deg(\beta_i)\deg(\beta_j)$.

\textbf{Case 1:} $q$ is odd.   In this case, the resulting reduced norm form is $Q(x_1, x_2, x_3, x_4) = \Nrd(\sum_{i=1}^{4} x_i f_i) = \sum_{i=1}^4 a_ix_i^2$. The coefficients $a_i$ may be rational, but $v_q(a_i) = 0$, so we may replace $a_i$ by an integer mod $q^r$. Then there is a nonzero solution $(x_1, x_2, x_3) \in (\FF_q)^3$ to the equation $\sum_{i=1}^3 a_i x_i^2 \equiv 0$, which can be found by a deterministic algorithm running in polynomial time in $\log(q)$ \cite{W5}. Reindexing the basis elements $f_i$ and the corresponding $a_i$ as necessary, we can assume $x_1 \neq 0$, so that the quadratic polynomial $Q_1(x) = Q(x, x_2, x_3, 0)$ has a nonzero solution, $x_1$, mod $q$. Furthermore, $Q_1'(x_1) = 2a_1x_1$, which is nonzero mod $q$. Thus, by Hensel's Lemma, $x$ can be lifted to a solution to $Q_1(x) = 0$ over $\ZZ_q$. A solution mod $q^{r+1}$ can be recovered in (at most) $r$ Hensel lifts, each running in polynomial time in $\log(q)$ (see \cite[Algorithm 15.10 and Theorem 15.11]{vzGG13} or \cite[Theorem 3.5.3 ]{Coh93}).

\textbf{Case 2:} $q = 2$.  In this case, the resulting reduced norm form is given by the normalized form $Q(x_1, x_2, x_3, x_4) = A_{1,2}(x_1, x_2) + A_{3,4}(x_3, x_4)$. Here $A_{i,j}(x_i, x_j) = a_{i}x_i^2 + a_{i,j}x_ix_j + a_jx_j^2$. The discriminant of $Q$, and therefore of $\OO_q \otimes \ZZ_q$, is $(4a_1a_2 - a_{1,2}^2)(4a_3a_4 - a_{ij})^2$. As $\OO_q \otimes \ZZ_q$ is $2$-maximal, $a_{i,2}$ and $a_{3,4}$ are necessarily nonzero (mod 2).

Let $A(y,z)$ be an atomic form of type (ii), say $A(y,z) = ay^2 + byz + cz^2$ such that $v_2(b) \leq v_2(a) \leq v_2(c)$. Further assume $v_2(b) = 0$. We show that we can choose $y_0,z_0 \in \ZZ/2\ZZ$ such that $A(y_0, z_0) \equiv 1 \pmod{2}$ and at least one of $y_0$ or $z_0$ is odd. If $v_2(a) \geq 1$ (and therefore $v_2(c) \geq 1$ as well), or if $v_2(a) = v_2(c) = 0$, we can set $y_0 \equiv z_0 \equiv 1 \pmod{2}$. Otherwise, in the case that $v_2(a) = 0$ and $v_2(c) > 0$, we can set $y_0 \equiv 1 \pmod{2}$ and $z_0 \equiv 0 \pmod{2}$.

The quadratic form $Q(x_1, x_2, x_3, x_4)$ is the sum of two atomic quadratic forms $A_{1,2}$ and $A_{3,4}$ as above. We obtain a solution mod $2$ by choosing $x_1, x_2, x_3, x_4$ mod 2 as just described. If $x_1$ and $x_2$ are both odd, i.e. in the case that $a_{1}$ and $a_2$ are of the same parity, we lift  $x_2, x_3, x_4$ to $\ZZ/q^r\ZZ$ to obtain a quadratic polynomial $Q_1(x) = Q(x, x_2, x_3, x_4)$ with a solution mod 2 at $x \equiv 1 \pmod{2}$. Then the derivative $Q_1'(1)= 2a_1 + a_{1,2}x_2$ is a unit in $\ZZ_2$. Otherwise, in the case that $x_1$ is odd and $x_2$ is even, we fix integers $x_1, x_3, x_4 \in \ZZ/q^r\ZZ$ to obtain a quadratic polynomial $Q_2(x) = Q(x_1, x, x_3, x_4)$ with a solution mod 2 at $x \equiv 0 \pmod{2}$. Then the derivative $Q_2'(0) = a_{1,2}x_1$ is a unit in $\ZZ_2$. In either case, we obtain a solution to $Q=0$ mod $2$ which can be lifted to a solution in $\ZZ_q^4$ via Hensel's Lemma. As in the case that $q$ is odd, a solution mod $q^{r+1}$ can be recovered in $r$ lifts, running in polynomial time in $\log(q)$. 
\end{proof}

We restate and prove Proposition~\ref{iso}.

\begin{prop} 
Let $q\neq p$. Given a $q$-maximal order $\OO_q \subset \End(E) \otimes \QQ$ and a nonnegative integer $r$, there is an algorithm which computes an isomorphism $f: \OO_q \otimes \ZZ_q \to M_2(\ZZ_q)$ modulo $q^{r+1}$. This isomorphism is specified by giving the inverse image of standard basis elements $i'$ and $j'$ determined mod $q^{r+1}$ in $\OO_q$, such that \[j' \mapsto \begin{pmatrix} 0 & 1\\1 & 0 \end{pmatrix}\] and \[i' \mapsto \begin{pmatrix} 1 & 0\\ 0 & -1 \end{pmatrix} \text{ if } q \neq 2, \begin{pmatrix}0 & 1 \\ 1 & 1 \end{pmatrix} \text{ otherwise}.\] 
The run time is polynomial in $\log(q^r)$ and the size of the basis and multiplication table for $\OO_q$. 
In terms of the basis for $\OO_q$, the representatives $i'$ and $j'$ are expressed with coefficients which are determined mod $q^{r+1}$
\end{prop}

\begin{proof}

By Proposition~\ref{zerodiv}, there is an algorithm to compute $x \in \OO_q \otimes \ZZ_{(q)}$ such that $\Nrd(x) \equiv 0 \pmod{q^{r+1}}$. We first use $x$ as input for \cite[Algorithm 4.2]{Voight13}to compute nonzero $e \in \OO_q \otimes \ZZ_q$ such that $e^2 = 0$. As in the proof of Proposition~\ref{zerodiv}, we only specify $e$ up to precision $q^{r+1}$ and can therefore approximate $e$ with an element of $\OO_q \otimes \ZZ$. Furthermore, we can choose $e = \sum_{i=1}^4 e_i f_i$ such that for some $i$, $q\nmid e_i$.

Then, on input $e$, we use \cite[Algorithm 4.3]{Voight13} to compute $i'$ and $j'$ as a $\ZZ$-linear combination of $\frac{1}{s}e$ and $\frac{1}{s}f_ie$, for a basis element $f_i$ such that $s=\Trd(f_ie)$ is nonzero. 

In fact, we will modify the algorithm by choosing $f_i$ such that $\Trd(f_ie)$ is nonzero mod $q$. If no such $i$ exists, then $\Trd(ye) = 0$ for all $y \in \OO_q$, so we show this cannot happen. Write $y = \sum_{j=1}^4 y_j f_j$ and $e = \sum_{i=1}^4 e_i f_i$, and consider the expression for $\Trd(ye) = - \Trd(y \bar{e})$ given by $\sum_{j=1}^4 \sum_{i=1}^4 -y_i e_j \Trd(f_i \hat{f_j})$. As $\{f_1, f_2, f_3, f_4\}$ is a normalized basis, the equation simplifies in the following ways, depending on if $q$ is even or odd.

If $q$ is odd, then the expression simplifies to $\sum_{i=1}^4 -e_i \Trd(f_i \hat{f_i}) y_i$. This is identically 0 mod $q$ if and only if $q$ divides $e_i \Trd(f_i \hat{f_i})$ for all $i$. In the notation of the proof of Proposition~\ref{zerodiv}, $\Trd(f_i \hat{f_i})$ is exactly $2a_i$ and hence is not divisible by $q$ by $q$-maximality. Hence, this expression is identically 0 mod $q$ if and only if $q$ divides $e_i$ for all $i$, and we chose $e$ such that this does not happen.

If $q=2$, we have that $\Trd(f_i \hat{f_i}) = 2 \Nrd(f_i) \equiv 0 \pmod{q}$ for all $i$, so the only nonzero terms are $-e_1 \Trd(f_2 \hat{f_1}), -e_2 \Trd(f_1 \hat{f_2}), -e_3 \Trd(f_4 \hat{f_3}), -e_4 \Trd(f_3 \hat{f_4}).$ We have $\Trd(f_1 \hat{f_2})=Trd(f_2 \hat{f_1})=a_{1,2}$ and $\Trd(f_3 \hat{f_4})=\Trd(f_4 \hat{f_3})=a_{3,4}$, which are not divisible by $q$ as we showed in the proof of Proposition~\ref{zerodiv}. Hence this expression is identically 0 mod $q$ if and only if $q$ divides $e_i$ for all $i$, but we chose $e$ such that this does not happen.

This shows that $v_q(\Trd(ef_i)) = 0$ for some $i$, so that $\frac{1}{s} \in \ZZ_q$, and the elements $i'$ and $j'$ output by Algorithm 4.3 of ~\cite{Voight13} (with this modification) are elements of $\OO_q \otimes \ZZ_q$ and furnish an isomorphism of $\OO_q \otimes \ZZ_q \to M_2(\ZZ_q)$. To get $i'$ and $j'$ in $\OO_q$ rather than in $\OO_q \otimes \QQ$, replace $\frac{1}{s}$ by an integer $m \equiv s^{-1} \pmod{q^r}$.
\end{proof}


\end{document}